\journal{Computers \& Mathematics with Applications}
\theoremstyle{plain}
\newtheorem{theorem}{Theorem}[section]
\newtheorem{lemma}[theorem]{Lemma}
\theoremstyle{definition}
\theoremstyle{remark}
\newtheorem{remark}{Remark}
\numberwithin{equation}{section}
\numberwithin{theorem}{section}
\numberwithin{remark}{section}
\newdimen\cdsep
\def\cdstrut{\vrule height .6\cdsep width 0pt depth .4\cdsep}
\def\@cdstrut{{\advance\cdsep by 2em\cdstrut}}
\def\arrow#1#2{
  \ifx d#1
    \llap{$\scriptstyle#2$}\left\downarrow\cdstrut\right.\@cdstrut\fi
  \ifx u#1
    \llap{$\scriptstyle#2$}\left\uparrow\cdstrut\right.\@cdstrut\fi
  \ifx r#1
    \mathop{\hbox to \cdsep{\rightarrowfill}}\limits^{#2}\fi
  \ifx l#1
    \mathop{\hbox to \cdsep{\leftarrowfill}}\limits^{#2}\fi
}
\newcommand{\C}[1]{\mathfrak{H}^#1} 
\newcommand{\T}[1]{\mathfrak{T}^#1} 
\newcommand{\Tr}[2]{\mathrm{Tr}[#1](#2)}  
\newcommand{\tr}[2]{\mathrm{tr}[#1](#2)}  
\newcommand{\W}[1]{\mathfrak{N}^#1} 
\newcommand{\Vkcirc}[2]{\accentset{\circ}{V}_{#1} \Lambda^#2} 
\newcommand{\Vk}[2]{V_{#1} \Lambda^{#2}} 
\newcommand{\Sk}[2]{\Sigma^{#1,#2}} 
\newcommand{\VtoM}[1]{\mathcal{L}\left(#1\right)} 
\begin{document}

\begin{frontmatter}

\title{Conforming Finite Element Function Spaces in Four Dimensions, Part II: The Pentatope and Tetrahedral Prism}


\author{David M. Williams \corref{mycorrespondingauthor}} 
\address{Department of Mechanical Engineering, The Pennsylvania State University, University Park, Pennsylvania 16802, United States}

\cortext[mycorrespondingauthor]{Corresponding author}
\ead{david.m.williams@psu.edu}

\author{Nilima Nigam} 
\address{Department of Mathematics, Simon Fraser University, Burnaby, British Columbia BC V5C 2V3, Canada}


\begin{abstract}
In this paper, we present explicit expressions for conforming finite element function spaces, basis functions, and degrees of freedom on the pentatope and tetrahedral prism elements. More generally, our objective is to construct finite element function spaces that maintain conformity with infinite-dimensional spaces of a carefully chosen de Rham complex. This paper is a natural extension of the companion paper entitled ``Conforming Finite Element Function Spaces in Four Dimensions, Part I: Foundational Principles and the Tesseract" by Nigam and Williams, (2023). In contrast to Part I, in this paper we focus on two of the most popular elements which do not possess a full tensor-product structure in all four coordinate directions. We note that these elements appear frequently in existing space-time finite element methods. In order to build our finite element spaces, we utilize powerful techniques from the recently developed `Finite Element Exterior Calculus'. Subsequently, we translate our results into the well-known language of linear algebra (vectors and matrices) in order to facilitate implementation by scientists and engineers. 
\end{abstract}

\begin{keyword}
space-time; finite element methods; tetrahedral prism; pentatope; four dimensions; finite element exterior calculus
\MSC[2010] 14F40, 52B11, 58A12, 65D05, 74S05 
\end{keyword}

\end{frontmatter}

\section{Introduction}
Finite Element Exterior Calculus (FEEC) is a powerful and elegant framework for constructing exact sequences of finite element approximation spaces in arbitrary dimensions, (see for instance, the landmark paper~\cite{arnold2010finite}). There has been considerable literature dedicated to the development and analysis of FEEC on simplicial, tensorial, and prism-like elements. Our goal in this and a companion paper~\cite{nigam2023conforming}, is to restrict these results to the specific case of $\mathbb{R}^4$, and to present an explicit construction of these families of finite elements. Notable previous contributions in this direction are due to \cite{gopalakrishnan2018auxiliary} and \cite{fuentes2015orientation}, and several more important contributions will be discussed below. Broadly speaking, our construction uses a different de Rham complex than that of the previous work, (essentially, the adjoint of the complex which was used in~\cite{gopalakrishnan2018auxiliary}). In the companion paper~\cite{nigam2023conforming}, we presented several practical examples in $\mathbb{R}^4$ to motivate our definition of a de Rham complex and the associated traces. We shall only briefly review this material in the present paper.  

In this work, we focus on developing conforming finite element function spaces for the \emph{pentatope} and \emph{tetrahedral prism}. The pentatope is a generalization of the triangle to four dimensions, and the tetrahedral prism is a generalization of the triangular prism to four dimensions. In what follows, we review some of the relevant literature on these elements.

\subsection{Background}

The pentatope and tetrahedral prism have been frequently used in space-time finite element methods. For example, pentatopes have been used by Behr and coworkers to simulate linear and non-linear advection-diffusion problems for fluid dynamics applications with moving boundaries~\cite{behr2008simplex,karyofylli2018simplex,karyofylli2019simplex,von2019simplex,von2021four}. In their work, a \emph{partially}-unstructured pentatope mesh is formed by extruding a three-dimensional tetrahedral mesh in the temporal direction to create four-dimensional tetrahedral prism elements, and thereafter, these tetrahedral prism elements are subdivided into pentatope elements in accordance with a Delaunay criterion~\cite{behr2008simplex}. In addition, there is considerable interest in generating \emph{fully}-unstructured pentatope meshes, as evidenced by the efforts of Foteinos and Chrisochoides~\cite{foteinos20154d}, Caplan et al.~\cite{caplan2017anisotropic,caplan2019extension,caplan2019four,caplan2020four}, Frontin et al.~\cite{frontin2021foundations}, and Anderson et al.~\cite{anderson2023surface}. Broadly speaking, this latter work focuses on developing a more direct, Delaunay-based approach for generating unstructured meshes of pentatopes, (in contrast to the less direct extrusion technique of Behr and coworkers). 


Let us now turn our attention to the tetrahedral prism. Meshes of tetrahedral prisms can be generated in a very straightforward fashion, as we only need to extrude an existing tetrahedral mesh in order to generate a completely valid, boundary-conforming mesh of tetrahedral prisms (see above). 
We note that Tezduyar, Bazilevs, and coworkers have performed extensive work on space-time methods for tetrahedral prisms~\cite{tezduyar2006space,tezduyar2007modelling,tezduyar2010space,tezduyar2011space,takizawa2012space,kuraishi2019space,terahara2020heart,bazilevs2020ale}. They have used these methods to solve a host of fluid-structure interaction problems for biomedical, turbomachinery, and wind-turbine applications, amongst others. The sheer volume of their research on this topic is quite impressive, and we will not attempt to cover it all here. However, the interested reader is encouraged to consult~\cite{tezduyar2019space} for a concise review. 

To the authors' knowledge, no one has explicitly constructed high-order finite element spaces which are the equivalents of H(curl)- or H(div)-conforming spaces on pentatopes or tetrahedral prism elements.  Now, it is important to note that there are inherent difficulties associated with constructing these finite element spaces due to the absence of a complete tensor-product structure in all four coordinate directions. 
Fortunately, this exercise is still made possible by the tools of FEEC~\cite{arnold2006finite,arnold2010finite,arnold2018finite}. We refer the interested reader to part I of this paper for a detailed review of FEEC and its related publications. In this work, we will only focus on a few of these publications that are directly relevant. It turns out that FEEC techniques have already been previously employed to \emph{implicitly} construct high-order conforming finite element spaces on the pentatope (see Arnold et al.~\cite{arnold2006finite}), and on the tetrahedral prism (see Natale~\cite{natale2017structure} and McRae et al.~\cite{mcrae2016automated}). The goal of this paper is to extend this work, and generate \emph{explicit} expressions for the high-order conforming finite element spaces, basis functions, and degrees of freedom for the pentatope and tetrahedral prism. 
We believe that these explicit representations are essential to facilitating implementation and utilization of the elements by scientists and engineers. 

\subsection{Overview of the Paper}

The remainder of this paper is outlined as follows. In section 2, we introduce some notation and essential ideas. In section 3, we introduce our particular de Rahm complex, and the associated derivative operators, Sobolev spaces, and maps. In sections 4 and 5, we present explicit conforming finite element spaces on the pentatope and tetrahedral prism, respectively. Finally, in section 6, we summarize the contributions of this paper. 

\section{Notation and Preliminaries}

This section begins by introducing the reference pentatope and tetrahedral prism elements which will be extensively used throughout the paper. Thereafter, we review some well-known degrees of freedom on tetrahedra and triangular prisms, which will be used extensively in our finite element constructions. 

\subsection{The Reference Pentatope} \label{ref_pentatope_section}

Consider the following definition of a reference pentatope
\begin{align*}
    \widehat{K} := \T{4} := \left\{ x = \left(x_1, x_2, x_3, x_4 \right) \in \mathbb{R}^4 \big| -1 \leq x_1, x_2, x_3, x_4 \leq 1, \; x_1 + x_2 + x_3 + x_4 \leq -2 \right\},
\end{align*}
with vertices
\begin{align*}
    v_{1} &= [1,-1,-1,-1]^T,  \quad   
    v_{2} =[-1,1,-1,-1]^T,   \quad   
    v_{3} = [-1,-1,1,-1]^T, \\[1.0ex]   
    v_{4} &=  [-1,-1,-1,1]^T, \quad 
    v_{5} = [-1,-1,-1,-1]^T.
\end{align*}
Next, we introduce the definition of an arbitrary pentatope $K$ with vertices $v_{1}', v_{2}', \ldots, v_{5}'$.
There exists a bijective mapping between the reference pentatope and the arbitrary pentatope $\phi: \widehat{K} \rightarrow K$, such that
\begin{align*}
    \phi \left(x_1, x_2, x_3, x_4 \right) = \sum_{i=1}^{5} v_{i}' N_{i}\left(x_1, x_2, x_3, x_4 \right),
\end{align*}
where 
\begin{align*}
    N_1 &= \frac{x_1 + 1}{2}, \quad N_2 = \frac{x_2 + 1}{2}, \quad N_3 = \frac{x_3 + 1}{2}, \quad N_4 = \frac{x_4 + 1}{2}, \quad N_5 = -\frac{x_1 + x_2 + x_3 + x_4}{2} - 1.
\end{align*}

\subsection{The Reference Tetrahedral Prism} \label{ref_tetrahedral_prism_section}

Consider the following definition of a reference tetrahedral prism
\begin{align*}
    \widehat{K} := \W{4} := \left\{ x = \left(x_1, x_2, x_3, x_4 \right) \in \mathbb{R}^4 \big| -1 \leq x_1, x_2, x_3, x_4 \leq 1, \; x_1 + x_2 + x_3 \leq -1  \right\},
\end{align*}
with vertices
\begin{align*}
    v_{1} &= [
         1 ,
         -1 ,
         -1 ,
         -1
    ]^{T}, \quad 
    v_{2} = [
         -1 ,
         1 ,
         -1 ,
         -1
    ]^{T}, \quad
    v_{3} = [
         -1 ,
         -1 ,
         1 ,
         -1
    ]^{T}, \quad
    v_{4} = [
         -1 ,
         -1 ,
         -1 ,
         -1
    ]^{T}, \\[1.0ex]
    v_{5} &= [
         1 ,
         -1 ,
         -1 ,
         1
    ]^{T}, \quad 
    v_{6} = [
         -1 ,
         1 ,
         -1 ,
         1
    ]^{T}, \quad
    v_{7} = [
         -1 ,
         -1 ,
         1 ,
         1
    ]^{T}, \quad
    v_{8} = [
         -1 ,
         -1 ,
         -1 ,
         1
    ]^{T}.
\end{align*}
Next, we introduce the definition of an arbitrary tetrahedral prism $K$ with vertices $v_{1}', v_{2}', \ldots, v_{8}'$.
There exists a bijective mapping between the reference tetrahedral prism and the arbitrary tetrahedral prism $\phi: \widehat{K} \rightarrow K$, such that
\begin{align*}
    \phi \left(x_1, x_2, x_3, x_4 \right) =  \sum_{i=1}^{8} v_{i}' N_{i}\left(x_1, x_2, x_3, x_4 \right),
\end{align*}
where
\begin{align*}
    N_1 &= \frac{1}{4} \left(x_1 + 1\right)\left(x_4-1\right), \quad N_2 = \frac{1}{4} \left(x_2 + 1\right)\left(x_4-1\right), \\
    N_3 &= \frac{1}{4}\left(x_3 + 1\right)\left(x_4-1\right) , \quad N_4 = -\frac{1}{4}\left(x_1 + x_2 + x_3 + 1\right)\left(x_4-1\right), \\
    N_5 &= \frac{1}{4} \left(x_1 + 1\right)\left(x_4+1\right), \quad N_6 = \frac{1}{4} \left(x_2 + 1\right)\left(x_4+1\right), \\
    N_7 &= \frac{1}{4}\left(x_3 + 1\right)\left(x_4+1\right) , \quad N_8 = -\frac{1}{4}\left(x_1 + x_2 + x_3 + 1\right)\left(x_4+1\right).
\end{align*}
Figures~\ref{fig:pentatope} and \ref{fig:tetrahedral_prism} illustrate a generic pentatope and tetrahedral prism, respectively.
\begin{figure}[h!]
    \centering
    \includegraphics[width=10cm]{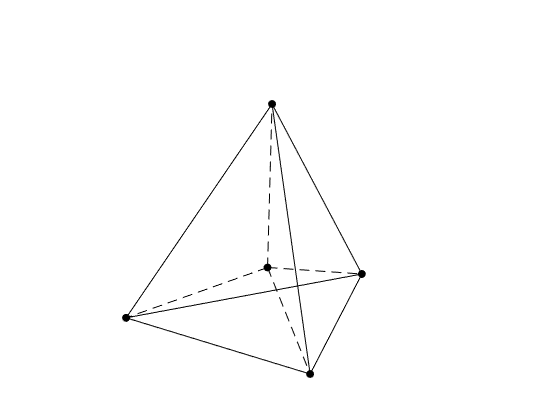}
    \caption{Illustration of a generic pentatope.}
    \label{fig:pentatope}
\end{figure}
\begin{figure}[h!]
    \centering
    \vspace{2cm}
    \includegraphics[width=5.75cm]{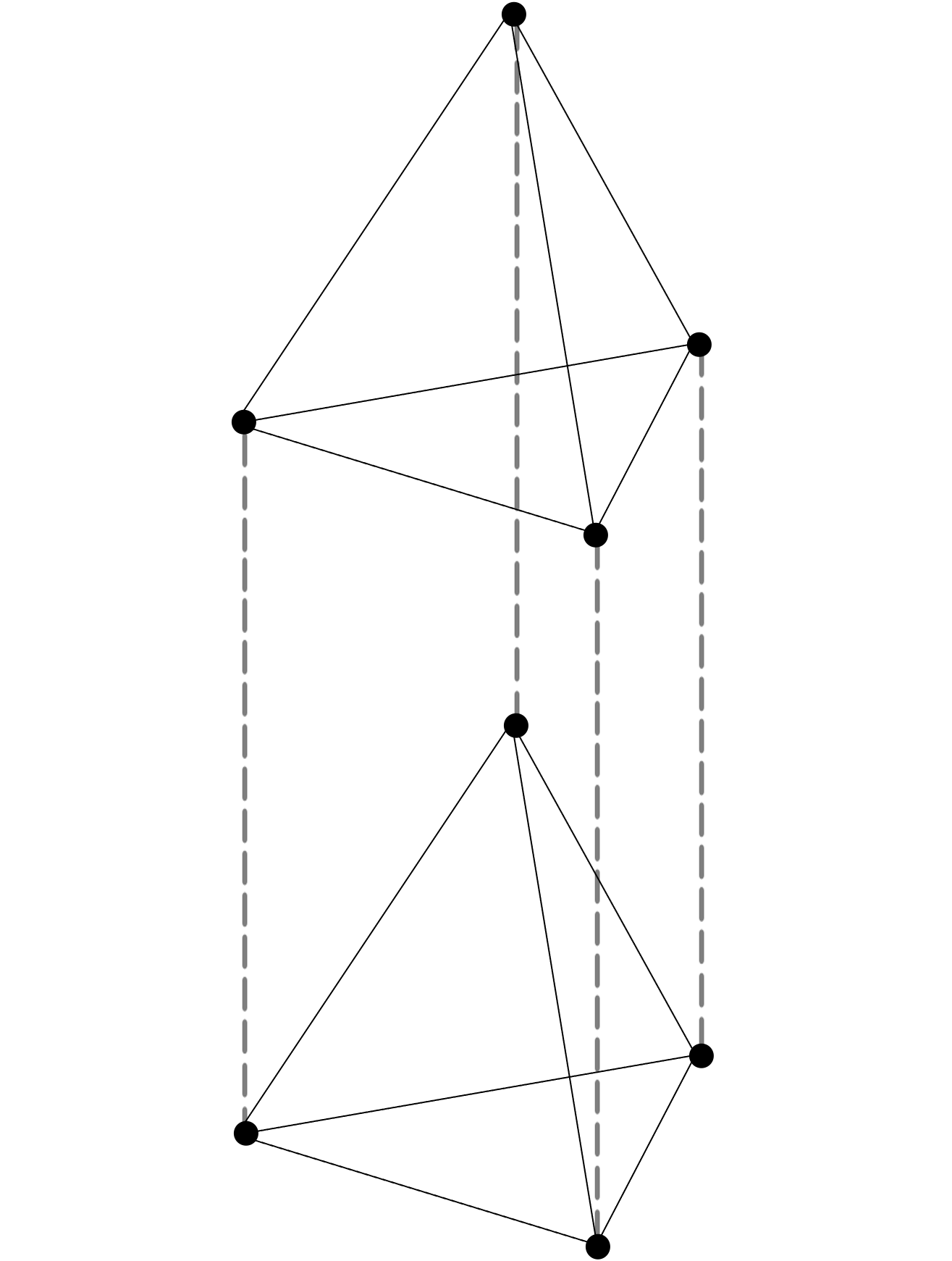}
    \caption{Illustration of a generic tetrahedral prism.}
    \label{fig:tetrahedral_prism}
\end{figure}
We summarize geometric information regarding these four-dimensional elements in~\autoref{Table:4DelementsSubs}. Here, the $d$-dimensional simplex, simplicial prism, and cube are denoted respectively by $\T{d}$, $\W{d}$, and $\C{d}$.
\begin{table}[h!]
    \centering
    \begin{tabular}{c|cc}\hline
    & Pentatope & Tetrahedral Prism  \\
    & $\T{4}$&$\W{4}$\\ \hline
    Vertices&5&8\\ \hline
    Edges&10&16\\ \hline
    Triangular faces $\T{2}$&10&8\\ \hline
    Quadrilateral faces $\C{2}$&0 &6\\ \hline
    Tetrahedral facets $\T{3}$&5&2\\ \hline
    Triangular prism facets $\W{3}$&0&4
    \end{tabular}
    \caption{Geometric information for reference pentatope and tetrahedral prism.}
    \label{Table:4DelementsSubs}
\end{table}

\subsection{Notation}
Let us begin by introducing the following generic set of differential forms on the contractible region $\Omega$
\begin{align*}
    \text{0-forms}, \qquad \omega \in \Lambda^{0}(\Omega), \qquad \omega &= \omega, \\[1.0ex]
    \text{1-forms}, \qquad \omega \in \Lambda^{1}(\Omega), \qquad \omega &= \omega_{1} dx^{1} + \omega_{2} dx^{2} + \omega_{3} dx^{3} + \omega_{4} dx^{4}, \\[1.0ex]
    \text{2-forms} , \qquad \omega \in \Lambda^{2}(\Omega), \qquad \omega &= \omega_{12} dx^{1} \wedge dx^{2} +\omega_{13} dx^{1} \wedge dx^{3} + \omega_{14} dx^{1} \wedge dx^{4} \\
    & + \omega_{23} dx^{2} \wedge dx^{3} + \omega_{24} dx^{2} \wedge dx^{4} + \omega_{34} dx^{3} \wedge dx^{4}, \\[1.0ex]
    \text{3-forms}, \qquad \omega \in \Lambda^{3}(\Omega), \qquad \omega &= \omega_{123} dx^{1} \wedge dx^{2} \wedge dx^{3} + \omega_{124} dx^{1} \wedge dx^{2} \wedge dx^{4} \\
    &+ \omega_{134} dx^{1} \wedge dx^{3} \wedge dx^{4} + \omega_{234} dx^{2} \wedge dx^{3} \wedge dx^{4}, \\[1.0ex]
    \text{4-forms}, \qquad \omega \in \Lambda^{4}(\Omega), \qquad \omega &= \omega_{1234} dx^{1} \wedge dx^{2} \wedge dx^{3} \wedge dx^{4},
\end{align*}
where $\omega \in \Lambda^{s}(\Omega)$ for $s = 0, 1, 2, 3, 4$ is the space of $s$-forms. For the sake of convenience, each differential form has a proxy which is obtained by applying a conversion operator (denoted by $\Upsilon_k$, $k = 0, 1, 2, 3, 4$) to each such that
\begin{align*}
    \Upsilon_{0} \omega &= \omega, \qquad  \Upsilon_{4} \omega  = \omega_{1234}, \\[1.0ex]
    \Upsilon_{1} \omega &= \begin{bmatrix} 
    \omega_1 \\[1.0ex] \omega_2 \\[1.0ex] \omega_3 \\[1.0ex] \omega_4 \end{bmatrix}, \quad 
    \Upsilon_{2} \omega = \frac{1}{2} \begin{bmatrix}
    0 & \omega_{12} & \omega_{13} & \omega_{14} \\[1.0ex]
    -\omega_{12} & 0 & \omega_{23} & \omega_{24} \\[1.0ex]
    -\omega_{13} & -\omega_{23} & 0 & \omega_{34} \\[1.0ex]
    -\omega_{14} & -\omega_{24} & -\omega_{34} & 0
    \end{bmatrix}, \quad
    \Upsilon_{3} \omega = \begin{bmatrix} \omega_{234} \\[1.0ex] -\omega_{134} \\[1.0ex] \omega_{124} \\[1.0ex] -\omega_{123} \end{bmatrix}.
\end{align*}
Let us denote by $\Vk{k}{s}(\Omega)$ the space of $k$-th order polynomial shape functions for the $s$-forms on~$\Omega$. Next, we let $\Sk{k}{s}(\Omega)$ denote the {\it degrees of freedom (dofs)}. Technically speaking, the dofs are a collection of linear functionals on $\Vk{k}{s}(\Omega)$ which is dual to this space. Throughout this paper, we will construct explicit descriptions of finite element triples of the following form
\begin{align*}
    \left(\widehat{K},\Vk{k}{s}(\widehat{K}), \Sk{k}{s}(\widehat{K})\right).
\end{align*}
Here, $\widehat{K}= \T{4}$ and $\W{4}$ are the reference elements. 

In accordance with standard conventions, we let $P^k(x_1,x_2,x_3,x_4)$ denote the space of polynomials of degree $\leq k$. In addition, we use $\tilde{P}^k$ to represent the space of homogeneous polynomials of total degree exactly $k$. Generally speaking, if we set $x=(x_1,x_2,x_3,x_4)$ then it follows that
\begin{align*}
    \sum_{|\alpha| \leq k } a_{\alpha} x^\alpha \in P^k(x_1,x_2,x_3,x_4), \qquad  \sum_{|\alpha| = k} a_{\alpha} x^\alpha \in \tilde{P}^k(x_1,x_2,x_3,x_4),
\end{align*}
where $\alpha$ is the multi-index, and $a_{\alpha}$ are constants. In almost all cases, we will suppress the arguments $(x_1,x_2,x_3,x_4)$ for the sake of brevity. 

Next, the symbol $Q^{l,m,n,q}(x_1,x_2,x_3,x_4)$ denotes standard tensorial polynomials of maximal degree $l,m,n,q$. These polynomials can be written explicity as follows
\begin{align*}
 Q^{l,m,n,q}(x_1,x_2,x_3,x_4)=P^l(x_1)P^m(x_2)P^n(x_3)P^q(x_4).
\end{align*}

In addition, consider a bijective map from 6-vectors to skew-symmetric matrices  in $\mathbb{K}$:
\begin{align}\label{eq:Ldefine}
\VtoM{\cdot}:\mathbb{R}^6 \rightarrow \mathbb{K} : \VtoM{\begin{bmatrix}
	w_{12}\\
	w_{13}\\
	w_{14}\\
	w_{23}\\
	w_{24}\\
	w_{34}
	\end{bmatrix}}:= \begin{bmatrix}
0&w_{12}&w_{13}&w_{14}\\
-w_{12}&0&w_{23}&w_{24}\\
-w_{13}&-w_{23}&0&w_{34}\\
-w_{14}&-w_{24}&-w_{34}&0
\end{bmatrix}.
\end{align}
Lastly, we introduce the following two operators that denote the trace of a quantity $u$ on to a $n$-dimensional submanifold $f$:
\begin{align*}
   \mathrm{tr}[f](u), \qquad  \Tr{f}{u}.
\end{align*}
In almost all cases, the argument $[f]$ is omitted when the submanifold of interest is clear. The first trace operator $\mathrm{tr}[f](u)$ refers to a well-defined restriction of $u$ to $f$, where the restriction is a scalar, 4-vector, or $4 \times 4$ matrix. The second trace operator $\mathrm{Tr}[f](u)$ refers to a well-defined restriction of $u$ to $f$, where the restriction is a scalar, $n$-vector, or $n \times n$ matrix. Generally speaking, there is (at least) a surjective map between the ranges of the two trace operators, such that
\begin{align*}
    \Xi: \mathrm{tr}[f](u) \longrightarrow \mathrm{Tr}[f](u).
\end{align*}
Here, we mean that the scalar, 4-vector, or $4\times 4$ matrix which is denoted by $\mathrm{tr}[f](u)$ can always be identified with a scalar, $n$-vector, or $n\times n$ matrix which is denoted by $\mathrm{Tr}[f](u)$.

\subsection{Degrees of Freedom}
Throughout this paper, our construction of finite element triples will strongly depend on the use of well-known dofs on tetrahedral and triangular-prismatic {\it facets}, and their associated faces, edges, and vertices. For the sake of completeness, we review these dofs in what follows. 
 
\subsubsection{Vertex Degrees of Freedom}
In accordance with standard principles from differential geometry, vertex degrees of freedom are only well-defined for 0-forms. For the pentatope and tetrahedral prism, we specify the vertex degrees of freedom $\Sigma^{k,0}(v)$ as the vertex values of the polynomial 0-form. We note that there are 5 such degrees of freedom for the pentatope $\T{4}$ and 8 such degrees of freedom for the tetrahedral prism $\W{4}$.

\subsubsection{Edge Degrees of Freedom}
Next, we recall that the edge degrees of freedom are only defined for 0- and 1-forms. One may define $e$ as an edge of an element $\widehat{K}$, where $\widehat{K}$ = $\T{4}$ or $\W{4}$, and let $u \in \Vk{k}{0}(\widehat{K})$ be a 0-form proxy. Next, one may construct edge degrees of freedom for $u$ as follows
\begin{equation}\label{eq:edge-0}
M_{e}(u) :=\left\{ \int_{e} \text{Tr}(u) q, \, \qquad \forall q \in P^{k-2}(e), \qquad  \mbox{ for each edge } \, e \, \mbox{ of }\, \widehat{K} \right\}.
\end{equation}
For pentatopes $\T{4}$ there are $10(k-1)$ such degrees of freedom, and for tetrahedral prisms $\W{4}$ there are $16(k-1)$ such degrees of freedom.

Consider a 1-form proxy $U \in \Vk{k}{1}(\widehat{K})$. One may define its edge degrees of freedom as follows
\begin{equation} \label{eq:edge-1}
M_{e}(U) :=\left\{ \int_{e} \mathrm{Tr} (U)\cdot \tau q, \, \qquad \forall q \in P^{k-1}(e), \qquad  \mbox{ for each edge } \, e \, \mbox{ of }\, \widehat{K} \right\},
\end{equation} 
where $\tau$ is a unit vector in the direction of $e$.
For $\T{4}$, there are $10k$ such degrees of freedom, and for $\W{4}$, there are $16k$ such degrees of freedom.

\subsubsection{Face Degrees of Freedom}
In addition, we recall that face degrees of freedom are only defined for 0-, 1- and 2-forms. We let $f$ denote a single face of an element $\widehat{K}$; in the context of the present paper, this face can be either triangular or quadrilateral.

{\bf Triangular faces:}
Consider polynomial 0-forms $u\in \Vk{k}{0}(\widehat{K})$. The associated face degrees of freedom on a triangular face $f=\T{2}$ are defined as
\begin{equation}\label{eq:triangle-0}
M_f(u):=\left\{ \int_f \text{Tr}(u)q, \quad \forall q \in P^{k-3}(f) \right\}.
\end{equation}
Consider polynomial 1-forms $U \in \Vk{k}{1}(\widehat{K})$ for which the face degrees of freedom are
\begin{equation}\label{eq:triangle-1}
M_f(U):=\left\{ \int_f (\mathrm{Tr}(U)\times \nu) \cdot (q \times \nu), \quad \forall q \in (P^{k-2}(f))^{2}, \quad q \cdot \nu =0 \right\},
\end{equation}
where $\nu$ denotes a unit normal vector to the face $f$. These definitions are slightly different from the standard definitions of edge degrees of freedom for Nedelec-type elements, but can be shown to be the same, (see Remark 5.31 in~\cite{monkbook}). 

Finally, consider polynomial 2-forms, $U \in \Vk{k}{2}(\widehat{K})$ for which the face degrees of freedom are 
\begin{equation}\label{eq:triangle-2}
M_f(U):=\left\{ \int_f (\mathrm{Tr}(U)\cdot \nu) q, \quad \forall q \in P^{k-1}(f) \right\}.
\end{equation}

{\bf Quadrilateral faces:}
Consider polynomial 0-forms $u\in \Vk{k}{0}(\widehat{K})$. The associated face degrees of freedom on a quadrilateral face $f = \C{2}$ are defined as
\begin{equation}\label{eq:square-0}
M_f(u):=\left\{ \int_f \text{Tr}(u)q, \quad \forall q \in Q^{k-2,k-2}(f) \right\}.
\end{equation}
Consider polynomial 1-forms $U \in \Vk{k}{1}(\widehat{K})$ for which the face degrees of freedom are
\begin{equation}\label{eq:square-1}
M_f(U):=\left\{ \int_f (\mathrm{Tr}(U)\times \nu) \cdot q, \quad \forall q \in Q^{k-2,k-1}\times Q^{k-1,k-2}(f) \right\}.
\end{equation}
Finally, consider polynomial 2-forms $U \in \Vk{k}{2}(\widehat{K})$ for which the face degrees of freedom are 
\begin{equation}\label{eq:square-2}
M_f(U):=\left\{ \int_f (\mathrm{Tr}(U)\cdot \nu) q, \quad \forall q \in Q^{k-1,k-1}(f) \right\}.
\end{equation}

\subsubsection{Facet Degrees of Freedom}
We recall that facet degrees of freedom are only defined for 0-, 1-, 2-, and 3-forms. The elements under consideration in this paper will only have tetrahedral or triangular-prismatic facets.  

{\bf Tetrahedral facets:}
Let $\mathcal{F}=\T{3}$ denote a tetrahedral facet.
For 0-forms $u \in \Vk{k}{0}(\widehat{K})$, we can specify facet degrees of freedom as
\begin{equation} \label{eq:tet0} M_{\mathcal{F}}(u) := \left\{ \int_{\mathcal{F}} \mathrm{Tr}(u) q, \qquad  q \in P^{k-4}(\mathcal{F}) \right\}. \end{equation}
For polynomial 1-forms $U \in \Vk{k}{1}(\widehat{K})$, we specify the facet degrees of freedom as
\begin{equation}\label{eq:tet1}
M_{\mathcal{F}}(U) := \left\{ \int_{\mathcal{F}} \mathrm{Tr}(U) \cdot  q, \qquad  q \in (P^{k-3}(\mathcal{F}))^3\right\}.
\end{equation}
For polynomial 2-forms $U \in \Vk{k}{2}(\widehat{K})$, we specify the facet degrees of freedom as
\begin{equation}\label{eq:tet2}
M_{\mathcal{F}}(U) := \left\{ \int_{\mathcal{F}} \mathrm{Tr}(U) \cdot  q, \qquad  q \in (P^{k-2}(\mathcal{F}))^3\right\}.
\end{equation}
Lastly, for polynomial 3-forms $U \in \Vk{k}{3}(\widehat{K})$, we specify the facet degrees of freedom as
\begin{equation}\label{eq:tet3}
M_{\mathcal{F}}(U) := \left\{ \int_{\mathcal{F}} \text{Tr}(U)  q, \qquad  q \in P^{k-1}(\mathcal{F})\right\}.
\end{equation}

{\bf Triangular-prismatic facets:}
Let $\mathcal{F}=\W{3}$ denote a triangular-prismatic facet.
For 0-forms $u \in \Vk{k}{0}(\widehat{K})$, we can specify facet degrees of freedom as
\begin{equation} M_{\mathcal{F}}(u) := \left\{ \int_{\mathcal{F}} \mathrm{Tr}(u) q, \qquad  q \in Q^{k-2}(\C{1}) \times P^{k-3}(\T{2}) \right\}.\label{eq:triprism0} \end{equation}
For polynomial 1-forms $U \in \Vk{k}{1}(\widehat{K})$, we specify the facet degrees of freedom as
\begin{align}
\nonumber M_{\mathcal{F},1}(U) &:= \left\{ \int_{\mathcal{F}} \mathrm{Tr}(U) \cdot  q, \qquad  q \in Q^{k-2}(\C{1}) \times \left[P^{k-2}(\T{2}), P^{k-2}(\T{2}), 0\right]^{T} \right\}, \\[1.0ex]
\nonumber M_{\mathcal{F},2}(U) &:= \left\{ \int_{\mathcal{F}} \mathrm{Tr}(U) \cdot q, \qquad  q \in Q^{k-1}(\C{1}) \times \left[0,0, P^{k-3}(\T{2}) \right]^{T} \right\}, \\[1.0ex]
M_{\mathcal{F}}(U) &:= M_{\mathcal{F},1}(U) \cup M_{\mathcal{F},2}(U). \label{eq:triprism1}
\end{align}
For polynomial 2-forms $U \in \Vk{k}{2}(\widehat{K})$, we specify the facet degrees of freedom as
\begin{align}
\nonumber M_{\mathcal{F},1}(U) &:= \left\{ \int_{\mathcal{F}} \mathrm{Tr}(U) \cdot q, \qquad  q \in Q^{k-1}(\C{1}) \times \left[ P^{k-2}(\T{2}), P^{k-2}(\T{2}), 0\right]^{T} \right\}, \\[1.0ex]
\nonumber M_{\mathcal{F},2}(U) &:= \left\{ \int_{\mathcal{F}} \mathrm{Tr}(U) \cdot q, \qquad  q \in Q^{k-2}(\C{1}) \times \left[0, 0, P^{k-1}(\T{2}) \right]^{T} \right\}, \\[1.0ex]
M_{\mathcal{F}}(U) &:= M_{\mathcal{F},1}(U) \cup M_{\mathcal{F},2}(U). \label{eq:triprism2}
\end{align}
Lastly, for polynomial 3-forms $U \in \Vk{k}{3}(\widehat{K})$, we specify the facet degrees of freedom as
\begin{equation} M_{\mathcal{F}}(U) := \left\{ \int_{\mathcal{F}} \mathrm{Tr}(U) q, \qquad  q \in Q^{k-1}(\C{1}) \times P^{k-1}(\T{2}) \right\}.\label{eq:triprism3} \end{equation}

\section{Sobolev Spaces and Associated Mappings}\label{sobolev_sec}

In accordance with the standard FEEC approach, we introduce the \emph{de Rahm} complex for smooth functions in three dimensions
\begin{align*}
	\begin{matrix}
		C^{\infty}(\Omega, \mathbb{R}) & \arrow{r}{\nabla} & C^{\infty}(\Omega, \mathbb{R}^{3}) & \arrow{r}{\nabla \times} & C^{\infty}(\Omega, \mathbb{R}^{3}) & \arrow{r}{\nabla \cdot} & C^{\infty}(\Omega, \mathbb{R}).
	\end{matrix}
\end{align*}
Here, we have used the conventional derivative operators for three dimensions: namely, $\nabla$ denotes the gradient, $\nabla \times$ denotes the curl, and $\nabla \cdot$ denotes the divergence.

Next, we can also define the de Rahm complex for smooth functions in four dimensions
\begin{align*}
	\begin{matrix}
		C^{\infty}(\Omega, \mathbb{R}) & \arrow{r}{\mathrm{grad}} & C^{\infty}(\Omega, \mathbb{R}^{4}) & \arrow{r}{\mathrm{skwGrad}} & C^{\infty}(\Omega, \mathbb{K}) & \arrow{r}{\mathrm{curl}} & C^{\infty}(\Omega, \mathbb{R}^{4})  & \arrow{r}{\mathrm{div}} & C^{\infty}(\Omega,\mathbb{R}).
	\end{matrix}
\end{align*}
Here, we have introduced new first-derivative operators for functions in $\mathbb{R}^4$. We will provide precise definitions for these operators in what follows.

In four dimensions, `$\text{grad}$' is the standard gradient operator which can be applied to a scalar, $u \in L^{2}\left(\Omega, \mathbb{R} \right)$, such that $\left[\text{grad} u\right]_{i} = \partial_{i} u$ for $i = 1, 2, 3, 4$. In addition,  `$\text{skwGrad}$' is an antisymmetric gradient operator which can be applied to a 4-vector, $E \in L^{2}\left(\Omega, \mathbb{R}^{4} \right)$, as follows
\begin{align*}
    \left[ \text{skwGrad} \, E \right] = \frac{1}{2} \left( 
    \left[ \text{Grad} \, E\right]^{T} - \left[ \text{Grad} \, E\right] \right),
\end{align*}
where $[\mathrm{Grad} E]_{ij} = \partial_{j}E_{i}$ for $i = 1, 2, 3, 4$ and $j = 1, 2, 3, 4$.
Next, `$\text{curl}$' is a derivative operator which can be applied to a $4 \times 4$ skew-symmetric matrix, $F \in L^{2}\left(\Omega, \mathbb{K} \right)$ as follows
\begin{align*}
    \left[\text{curl} \, F \right]_{i} = \sum_{k,l=1}^{4} \varepsilon_{ijkl} \partial_{j} F_{kl},
\end{align*}
where $\varepsilon_{ijkl}$ is the Levi-Civita tensor. Lastly, `$\text{div}$' is the standard divergence operator which acts on a 4-vector, $G \in L^{2}\left(\Omega, \mathbb{R}^{4} \right)$, such that $\left[\text{div} \, G \right] = \partial_{i} G_{i}$ for $i = 1, 2, 3, 4$. 

For the sake of completeness, we can also define the `Curl' and `Div' operators, which are isomorphic to the `skwGrad' and `curl' operators, respectively. In particular, `$\text{Curl}$' is a derivative operator which can be applied to a 4-vector, $E \in L^{2}(\Omega, \mathbb{R}^{4})$, as follows
\begin{align*}
    \left[\text{Curl} \, E \right]_{ij} = \sum_{k,l=1}^{4} \varepsilon_{ijkl} \partial_{k} E_{l},
\end{align*}
and `$\text{Div}$' is a derivative operator which can be applied to a $4 \times 4$ skew-symmetric matrix, $F \in L^{2}(\Omega, \mathbb{K})$, as follows
\begin{align*}
    \left[\text{Div} \, F \right]_{i} = \sum_{j=1}^{4} \partial_{j} F_{ij}.
\end{align*}

It turns out that the first-derivative operators (above) satisfy the following relations
\begin{align*}
    \Upsilon_{1} \left( d^{\left(0\right)} \omega \right) &= \text{grad} \left( \Upsilon_{0} \omega \right), \qquad \qquad \qquad \; \omega \in \Lambda^{0}(\Omega) :=\mathcal{D}'(\Omega,\Lambda^{0}),\\
    \Upsilon_{2} \left( d^{\left(1\right)} \omega \right) &= \text{skwGrad} \left( \Upsilon_{1} \omega \right), \qquad \qquad \omega \in \Lambda^{1}(\Omega) := \mathcal{D}'(\Omega,\Lambda^{1}), \\
    \Upsilon_{3} \left( d^{\left(2\right)} \omega \right) &= \text{curl} \left( \Upsilon_{2} \omega \right), \qquad \qquad \qquad \; \omega \in \Lambda^{2}(\Omega) := \mathcal{D}'(\Omega,\Lambda^{2}), \\
    \Upsilon_{2} \left( d^{\left(3\right)} \omega \right) &= \text{div} \left( \Upsilon_{3} \omega \right), \qquad \qquad \qquad \; \, \omega \in \Lambda^{3}(\Omega) := \mathcal{D}'(\Omega,\Lambda^{3}).
\end{align*}
In accordance with these relations, the following diagram commutes
\begin{align*}
\begin{matrix}
\mathcal{D}'(\Omega,\Lambda^{0}) & \arrow{r}{d^{\left(0\right)}} & \mathcal{D}'(\Omega,\Lambda^{1}) & \arrow{r}{d^{\left(1\right)}} & \mathcal{D}'(\Omega,\Lambda^{2}) & \arrow{r}{d^{\left(2\right)}} & \mathcal{D}'(\Omega,\Lambda^{3})  & \arrow{r}{d^{\left(3\right)}} & \mathcal{D}'(\Omega,\Lambda^{4})                 \cr
\arrow{d}{\Upsilon_0} &                      & \arrow{d}{\Upsilon_1} &   & \arrow{d}{\Upsilon_2} & & \arrow{d}{\Upsilon_3} & & \arrow{d}{\Upsilon_4} \cr
\mathcal{D}'(\Omega, \mathbb{R})                   & \arrow{r}{\text{grad}} & \mathcal{D}'(\Omega, \mathbb{R}^{4}) &  \arrow{r}{\text{skwGrad}} & \mathcal{D}'(\Omega, \mathbb{K}) &  \arrow{r}{\text{curl}} & \mathcal{D}'(\Omega, \mathbb{R}^{4})  &  \arrow{r}{\text{div}} & \mathcal{D}'(\Omega, \mathbb{R})            \cr
\end{matrix}
\end{align*}

In addition, the first-derivative operators can be used to construct the following Sobolev spaces
\begin{align*}
    H\left( \text{grad}, \Omega, \mathbb{R} \right) &= \left\{u \in L^{2}\left(\Omega, \mathbb{R} \right) : \text{grad} \, u \in L^{2} \left(\Omega,\mathbb{R}^{4} \right) \right\}, \\
    H\left( \text{skwGrad}, \Omega, \mathbb{R}^{4} \right) &= \left\{E \in L^{2}\left(\Omega, \mathbb{R}^{4} \right) : \text{skwGrad} \, E \in L^{2} \left(\Omega, \mathbb{K} \right) \right\}, \\
    H\left( \text{curl}, \Omega, \mathbb{K} \right) &= \left\{F \in L^{2}\left(\Omega, \mathbb{K} \right) : \text{curl} \, F \in L^{2} \left(\Omega,\mathbb{R}^{4} \right) \right\}, \\
    H\left( \text{div}, \Omega, \mathbb{R}^{4} \right) &= \left\{G \in L^{2}\left(\Omega, \mathbb{R}^{4} \right) : \text{div} \, G \in L^{2} \left(\Omega,\mathbb{R} \right) \right\}, 
\end{align*}
and
\begin{align*}   
    H\left( \text{Curl}, \Omega, \mathbb{R}^{4} \right) &= \left\{E \in L^{2}\left(\Omega, \mathbb{R}^{4} \right) : \text{Curl} \, E \in L^{2} \left(\Omega, \mathbb{K} \right) \right\}, \\
    H\left( \text{Div}, \Omega, \mathbb{K} \right) &= \left\{F \in L^{2}\left(\Omega, \mathbb{K} \right) : \text{Div} \, F \in L^{2} \left(\Omega,\mathbb{R}^{4} \right) \right\}.
\end{align*} 
In accordance with these definitions, we can introduce the L2 de Rahm complex in four dimensions
\begin{align*}
	\begin{matrix}
		H(\mathrm{grad}, \Omega, \mathbb{R}) & \arrow{r}{\mathrm{grad}} & H(\mathrm{skwGrad}, \Omega, \mathbb{R}^{4}) & \arrow{r}{\mathrm{skwGrad}} & H(\mathrm{curl}, \Omega, \mathbb{K}) & \arrow{r}{\mathrm{curl}} & H(\mathrm{div},\Omega, \mathbb{R}^{4})  & \arrow{r}{\mathrm{div}} & L^{2}(\Omega,\mathbb{R}).
	\end{matrix}
\end{align*}

Next, it is important for us to characterize the behavior of our function spaces on the boundary of the domain, $\partial \Omega$. With this in mind, we can introduce the following trace identity for 1-forms
\begin{align}
    \nonumber \left(\text{tr}^{(1)} E \right)(F) &= \int_{\partial \Omega} \left( n \times E \right) : F \, ds \\[1.0ex]
     &= \int_{\Omega} \left(\text{Curl} \, E \right) : F \, dx - \int_{\Omega} \left(\text{curl} \, F \right) \cdot E \, dx,  \label{ibp_one_A}
\end{align}
where $E \in H\left( \text{Curl}, \Omega, \mathbb{R}^{4} \right)$ and $F \in H\left( \text{curl}, \Omega, \mathbb{K} \right)$. 
Similarly,
\begin{align}
    \nonumber \left(\text{tr}^{(1)} E \right)(F) &= \frac{1}{2} \int_{\partial \Omega}  \left[  E \otimes n - n \otimes E \right] : F \, ds \\[1.0ex]
    &= \int_{\Omega} \left( \text{Div} \, F \right) \cdot E \, dx - \int_{\Omega} F : \left(\text{skwGrad} \, E \right) \, dx, \label{ibp_one_C}
\end{align}
where $E \in H\left( \text{skwGrad}, \Omega, \mathbb{R}^{4} \right)$ and $F \in H\left( \text{Div}, \Omega, \mathbb{K} \right)$. Next, the following trace identity holds for 2-forms
\begin{align}
    \nonumber \left(\text{tr}^{(2)} F \right)(E) &= \int_{\partial \Omega} \left( n \times F \right) \cdot E \, ds \\[1.0ex]
     &= \int_{\Omega} \left(\text{curl} \, F \right) \cdot E \, dx -\int_{\Omega} \left(\text{Curl} \, E \right) : F \, dx,  \label{ibp_two_A}
\end{align}
where $E \in H\left( \text{Curl}, \Omega, \mathbb{R}^{4} \right)$ and $F \in H\left( \text{curl}, \Omega, \mathbb{K} \right)$.
Finally, the following trace identity holds for 3-forms
\begin{align}
    \nonumber \left(\text{tr}^{(3)} G \right)(u) &= \int_{\partial \Omega} \left(G \cdot n \right) u \, ds \\[1.0ex]
    &= \int_{\Omega} \left(\text{div} \,G \right) u \, dx + \int_{\Omega} G \cdot \left( \text{grad} \, u \right) dx, \label{ibp_three}
\end{align}
where $G \in H(\text{div}, \Omega, \mathbb{R}^{4})$ and $u \in H(\text{grad}, \Omega, \mathbb{R})$. 

There are two cross-product operators which are defined in the trace identities above. In particular, the cross-product operator between a pair of 4-vectors is given by 
\begin{align*}
    \left[M \times N \right]_{ij} = \sum_{k,l=1}^{4} \varepsilon_{ijkl} M_k N_l,
\end{align*}
where $M \in \mathbb{R}^{4}$ and $N \in \mathbb{R}^{4}$. Furthermore, the cross-product operator between a 4-vector and a $4 \times 4$ skew-symmetric matrix is given by
\begin{align*}
    \left[M \times U\right]_{i} = \sum_{k,l=1}^{4} \varepsilon_{ijkl} M_{j} U_{kl},
\end{align*}
 where $M \in \mathbb{R}^{4}$ and $U \in \mathbb{K}$.

In accordance with the equations above, the traces for 0-forms, 1-forms, 2-forms, and 3-forms can be defined as follows
\begin{align*}
    \text{0-forms} \qquad u &= \Upsilon_{0} \omega, \qquad \text{tr}(u) = u\vert_{\partial \Omega}, \\[1.0ex]
    \text{1-forms} \qquad E &= \Upsilon_{1} \omega, \qquad \text{tr}(E) = \frac{1}{2}\left(E \otimes n - n \otimes E \right)\vert_{\partial \Omega}, \\[1.0ex]
    \text{2-forms} \qquad F &= \Upsilon_{2} \omega, \qquad \text{tr}(F) = \left(n \times F\right)\vert_{\partial \Omega}, \\[1.0ex]
    \text{3-forms} \qquad G &= \Upsilon_{3} \omega, \qquad \text{tr}(G) = \left(G\cdot n\right)\vert_{\partial \Omega},
\end{align*}
where
\begin{align*}
    &u \in H \left(\text{grad}, \Omega, \mathbb{R} \right), \qquad \qquad \; \; \, \text{tr}(u) \in H^{1/2}\left(\partial \Omega, \mathbb{R} \right), \\[1.0ex]
    &E \in H \left(\text{skwGrad}, \Omega, \mathbb{R}^{4} \right), \qquad \text{tr}(E) \in  H^{-1/2}\left(\partial \Omega, \mathbb{K} \right), \\[1.0ex]
   & F \in H \left(\text{curl}, \Omega, \mathbb{K} \right), \qquad \qquad \; \; \, \text{tr}(F) \in H^{-1/2}\left(\partial \Omega, \mathbb{R}^{4} \right), \\[1.0ex]
    &G \in H \left(\text{div}, \Omega, \mathbb{R}^{4} \right), \qquad \qquad \; \, \text{tr}(G) \in H^{-1/2}\left(\partial \Omega, \mathbb{R} \right).
\end{align*}
We note that the traces of 4-forms are not well-defined.

It may not be immediately obvious how the trace quantities behave by simply examining the identities above. In order to fix ideas, let us consider an example in which a simply connected Lipschitz domain $\Omega$ has a boundary that (non-trivially) intersects with the hyperplane $x_4=0$. We can set $\partial \Omega \cap \{x_4=0\} = \mathcal{F}$, where $\mathcal{F}$ denotes a facet. In addition, we observe that the unit normal of the facet is $n=[0,0,0,1]^T$. Under these circumstances, we consider a sufficiently smooth $s$-form, $\omega$:
\begin{itemize}
	\item If $s=0$ and $u = \Upsilon_{0} \omega$, then
    \begin{equation}
        \tr{\mathcal F}{u} = u\vert_{\mathcal{F}} = u(x_1,x_2,x_3,0), 
    \end{equation}
    is the restriction of $u$ on to $\mathcal{F}$. The trace can be identified with a scalar field $\Tr{\mathcal F}{u}$, which is a 0-form proxy on $\mathcal{F}$.
	\item If $s=1$ and $E = \Upsilon_{1} \omega$, then
    \begin{align}
        \nonumber \tr{\mathcal F}{E} &= \frac{1}{2} \left(E \otimes n - n \otimes E\right)\vert_{\mathcal{F}} \\[1.0ex] 
        \nonumber &= \frac{1}{2} \begin{bmatrix}
            0 & 0 & 0 & E_1(x_1,x_2,x_3,0) \\[1.0ex]
            0 & 0 & 0 & E_2(x_1,x_2,x_3,0) \\[1.0ex]
            0 & 0 & 0 & E_3(x_1,x_2,x_3,0) \\[1.0ex]
            -E_1(x_1,x_2,x_3,0) & -E_2(x_1,x_2,x_3,0) & -E_3(x_1,x_2,x_3,0) & 0
        \end{bmatrix} \\[1.0ex]
        &= \frac{1}{2} \VtoM{\left[0, 0, E_1(x_1,x_2,x_3,0), 0, E_2(x_1,x_2,x_3,0), E_3(x_1,x_2,x_3,0) \right]^{T}},
        \label{trace_formula_one}
    \end{align}
    is the bivector trace of $E$ on to $\mathcal{F}$. The trace can be identified with a 3-vector $\Tr{\mathcal F}{E}$, which is a 1-form proxy on $\mathcal{F}$. 
    \item If $s=2$ and $F = \Upsilon_{2} \omega$, then
    \begin{align}
        \tr{\mathcal F}{F} = \left(n \times F\right)\vert_{\mathcal{F}} = 2 \begin{bmatrix}
            F_{23}(x_1,x_2,x_3,0) \\[1.0ex]
            -F_{13}(x_1,x_2,x_3,0) \\[1.0ex]
            F_{12}(x_1,x_2,x_3,0) \\[1.0ex]
            0
        \end{bmatrix}, \label{trace_formula_two}
    \end{align}
    is the tangential trace of $F$ on to $\mathcal{F}$. The trace can be identified with a 3-vector $\Tr{\mathcal F}{F}$, which is a 2-form proxy on $\mathcal{F}$.
    \item If $s=3$ and $G = \Upsilon_{3} \omega$, then
    \begin{equation}
        \tr{\mathcal F}{G} = \left(G \cdot n\right)\vert_{\mathcal{F}} = G_{4}(x_1,x_2,x_3,0),
        \label{trace_formula_three}
    \end{equation}
    is the normal trace of $G$ on to $\mathcal{F}$. The trace of a 3-form can be identified with a scalar field $\Tr{\mathcal F}{G}$, which is a 3-form proxy on $\mathcal{F}$.
    \item If $s =4$, then the trace is not well-defined.
\end{itemize}

Lastly, having establishing the Sobolev spaces and the corresponding derivative and trace identities, we introduce the pullback operator $\phi^{\ast}$ of the differential forms $\omega$, as follows
\begin{align}
    u &= \Upsilon_0 \omega, \quad \forall u \in H \left(\text{grad}, \Omega, \mathbb{R} \right), \qquad \qquad \Upsilon_0 \phi^{\ast} \omega = u \circ \phi, \\[1.0ex]
    E &= \Upsilon_{1} \omega, \quad \forall E \in H \left(\text{skwGrad}, \Omega, \mathbb{R}^{4} \right), \quad \;  \Upsilon_{1} \phi^{\ast} \omega = D \phi^{T} \left[E \circ \phi \right], \\[1.0ex] 
    F &= \Upsilon_{2} \omega, \quad \forall F \in H \left(\text{curl}, \Omega, \mathbb{K} \right), \qquad \qquad \; \Upsilon_{2} \phi^{\ast} \omega =  D\phi^{T} \left[F \circ \phi \right] D\phi, \label{new_map_orig} \\[1.0ex] 
    G &= \Upsilon_{3} \omega, \quad \forall G \in H \left(\text{div}, \Omega, \mathbb{R}^{4} \right), \quad \quad \qquad \Upsilon_{3} \phi^{\ast} \omega = \left| D\phi \right| D \phi^{-1} \left[ G \circ \phi \right], \label{piola_map_orig} \\[1.0ex]
    q &= \Upsilon_{4} \omega, \quad \forall q \in L^{2} \left(\Omega, \mathbb{R} \right), \qquad \qquad \qquad \; \; \Upsilon_{4} \phi^{\ast} \omega = \left| D \phi \right| \left[ q \circ \phi \right].
\end{align} 
Here $[D\phi]_{ij} = \partial_{j}\phi_i$ is the Jacobian matrix.

\section{Finite Elements on a Reference Pentatope}
In this section, we record explicitly, finite element spaces and degrees of freedom on a pentatope, $\T{4}$. The construction we choose for $\Vk{k}{s}(\T{4})$ is based on those presented in \cite{monkbook}; these are directly analogous to the $P_k^-\Lambda^s$ spaces on a tetrahedron, as described in  \cite{arnold2006finite}.

We require that our spaces $\Vk{k}{s}(\T{4}) $ satisfy the relation
\begin{align*}
	\begin{matrix}
		\Vk{k}{0}(\T{4}) & \arrow{r}{d^{\left(0\right)}} & \Vk{k}{1}(\T{4}) & \arrow{r}{d^{\left(1\right)}} & \Vk{k}{2}(\T{4}) & \arrow{r}{d^{\left(2\right)}} & \Vk{k}{3}(\T{4})  & \arrow{r}{d^{\left(3\right)}} & \Vk{k}{4}(\T{4}).
	\end{matrix}
\end{align*}
With this in mind, we require that
\begin{subequations}
\begin{align}
	\Vk{k}{0}(\T{4}) &:= P^{k}(\T{4}), \label{rt_pentatope_0} \\[1.0ex]
	\Vk{k}{1}(\T{4}) &:= ({P}^{k-1}(\T{4}))^4 \oplus \left\{ p\in (\tilde{P}^k(\T{4}))^4 \vert p \cdot x =0\right\}, \label{rt_pentatope_1} \\[1.0ex]
	\Vk{k}{2}(\T{4}) &:= \mathcal{L}\left((P^{k-1}(\T{4}))^6\right) \oplus \tilde{P}^{k-1}(\T{4})B_1 \oplus \tilde{P}^{k-1}(\T{4})B_2\oplus\tilde{P}^{k-1}(\T{4})B_3\oplus\tilde{P}^{k-1}(\T{4})B_4, \label{rt_pentatope_2}\\
 \nonumber \\
 \nonumber \mbox{where}\\
 \nonumber
 B_1&:=\begin{bmatrix}
		0&0&0&0\\
		0&0&x_4&-x_3\\
		0&-x_4&0&x_2\\ 
		0& x_3 & -x_2 & 0
	\end{bmatrix},
	\qquad B_2:=\begin{bmatrix}
		0&0&-x_4&x_3\\
		0&0&0&0\\
		x_4&0&0&-x_1\\
		-x_3&0&x_1&0
	\end{bmatrix},\\
 \nonumber
	B_3&:=\begin{bmatrix}
		0&x_4&0&-x_2\\
		-x_4&0&0&x_1\\
		0&0&0&0\\
		x_2&-x_1&0&0
	\end{bmatrix}, 
 \qquad B_4:=\begin{bmatrix}
		0&-x_3&x_2&0\\
		x_3&0&-x_1&0\\
		-x_2&x_1&0&0\\
		0&0&0&0
	\end{bmatrix}, 
	\\[3.0ex]
	\Vk{k}{3}(\T{4}) &:= (P^{k-1}(\T{4}))^4 \oplus \tilde{P}^{k-1}(\T{4}) x, \label{rt_pentatope_3} \\[1.0ex]
	\Vk{k}{4}(\T{4})&:= P^{k-1}(\T{4}) \label{rt_pentatope_4}. 
\end{align} 
\end{subequations}

\begin{remark}\label{remark:pent}
	It is easily seen that the space of 2-forms (Eq.~\eqref{rt_pentatope_2}) can be described as follows \begin{align*}
	\Vk{k}{2}(\T{4}):=\mathcal{L}\left((P^{k-1}(\T{4}))^6\right)  \oplus \left\{B\in \mathcal{L}((\tilde{P}^{k}(\T{4}))^6) \vert B x = 0\right\}.
\end{align*}
\end{remark}
For details on the derivation of these spaces, we refer the interested reader  to~\ref{pent_exp}. The exactness of the sequence follows directly.

It remains for us to identify the bubble spaces $\Vkcirc{k}{s}(\T{4})$. These take the following form
\begin{subequations}
    \begin{align}
        \Vkcirc{k}{0}(\T{4}) &:= \text{span} \left\{ \vartheta_{ij\ell m}(x_1, x_2, x_3, x_4) \right\}, \label{rt_pentatope_0_bubble} \\[1.0ex]
        \Vkcirc{k}{1}(\T{4}) &:= \text{span} \left\{\Phi_{ij\ell m}^{r}(x_1, x_2, x_3, x_4) \right\}, \label{rt_pentatope_1_bubble}  \\[1.0ex]
        \Vkcirc{k}{2}(\T{4}) &:= \text{span} \left\{\Theta_{ij\ell m}^{r} (x_1, x_2, x_3, x_4) \right\}, \label{rt_pentatope_2_bubble}  \\[1.0ex]
        \Vkcirc{k}{3}(\T{4}) &:= \text{span} \left\{\Psi_{ij\ell m}^{r}(x_1, x_2, x_3, x_4) \right\}, \label{rt_pentatope_3_bubble} 
    \end{align}
\end{subequations}
where the basis functions $\vartheta$, $\Phi$, $\Theta$, and $\Psi$ and the associated indexes $i$, $j$, $\ell$, $m$, and $r$ are defined below.

Consider the following H1-conforming interior functions $\vartheta_{ij \ell m}$ of degree $k$
\begin{align*}
    \vartheta_{ij\ell m}(x_1, x_2, x_3, x_4) = &L_{i}\left(\frac{\lambda_{2}}{\lambda_{1} + \lambda_{2}} \right)L_{j}^{2i}\left(\frac{\lambda_{3}}{\lambda_{1}+\lambda_{2}+\lambda_{3}} \right) L_{\ell}^{2(i+j)}\left(\frac{\lambda_{4}}{\lambda_{1}+\lambda_{2}+\lambda_{3}+\lambda_{4}} \right) L_{m}^{2(i+j+\ell)} \left(\lambda_{5}\right) \\[1.0ex] &\cdot 
     \left(\lambda_{1} + \lambda_{2}\right)^{i} \left(\lambda_{1} + \lambda_{2} + \lambda_{3} \right)^{j} \left(\lambda_{1} + \lambda_{2} + \lambda_{3} + \lambda_{4} \right)^{\ell},
\end{align*}
where $i \geq 2$, $j\geq 1$, $\ell \geq 1$, $m \geq 1$, and $n = i + j + \ell + m = 5, \ldots, k$ are the indexing parameters, $\lambda = \lambda(x) = \lambda(x_1, x_2, x_3, x_4) = (\lambda_1, \lambda_2, \lambda_3, \lambda_4, \lambda_5)$ are barycentric coordinates for the pentatope, $L_i$ are the integrated and scaled Legendre polynomials, and $L_j^{\alpha}$ are the integrated and scaled Jacobi polynomials, (see Remark~\ref{functional_remark} for details). 

In addition, consider the following H(skwGrad)-conforming interior functions $\Phi_{ij\ell m}^{r}$ of degree $k-1$
\begin{align*}
    \Phi_{ij\ell m}^{r}(x_1, x_2, x_3, x_4) = &P_{i}\left(\frac{\lambda_{b}}{\lambda_{a} + \lambda_{b}} \right)L_{j}^{2i+1}\left(\frac{\lambda_{c}}{\lambda_{a}+\lambda_{b}+\lambda_{c}} \right) L_{\ell}^{2(i+j)}\left(\frac{\lambda_{d}}{\lambda_{a}+\lambda_{b}+\lambda_{c}+\lambda_{d}} \right) L_{m}^{2(i+j+\ell)} \left(\lambda_{e}\right) \\[1.0ex] &\cdot 
     \left(\lambda_{a} + \lambda_{b}\right)^{i} \left(\lambda_{a} + \lambda_{b} + \lambda_{c} \right)^{j} \left(\lambda_{a} + \lambda_{b} + \lambda_{c} + \lambda_{d} \right)^{\ell}  \left(\lambda_{a} \nabla \lambda_{b} - \lambda_{b} \nabla \lambda_{a} \right), 
\end{align*}
where $i \geq 0$, $j\geq 1$, $\ell \geq 1$, $m \geq 1$, and $n = i + j + \ell + m = 3, \ldots, k-1$ are the indexing parameters, and $P_i$ are the shifted and scaled Legendre polynomials. In addition, for $r = 1,2,3,4$ we set $(a,b,c,d,e) = (1,2,3,4,5)$, $(a,b,c,d,e) = (2,3,4,5,1)$, $(a,b,c,d,e) = (3,4,5,1,2)$, and $(a,b,c,d,e) = (4,5,1,2,3)$, respectively.

The explicit formula given above for the H(skwGrad)-conforming polynomial functions is justified via Lemma~\ref{one_form_lemma}. In this lemma, we focus on the case in which $a = 1$ and $b = 2$, as all other cases are justified using similar arguments.

\begin{lemma}
    The following quantity belongs to the space $\Vk{k+1}{1}(\T{4})$ of H(skwGrad)-conforming functions
    \begin{align*}
        f_{k}\left(x_1, x_2, x_3, x_4\right) \left(\lambda_{1} \nabla \lambda_{2} - \lambda_{2} \nabla \lambda_{1} \right),
    \end{align*}
    where $\lambda_{1} = \lambda_{1} (x_1, x_2, x_3, x_4) $ and $\lambda_{2} = \lambda_{2} (x_1, x_2, x_3, x_4) $ are barycentric coordinates on the pentatope $\T{4}$, and where $f_{k}(x_1, x_2, x_3, x_4) \in P^{k}(\T{4})$.
    \label{one_form_lemma}
\end{lemma}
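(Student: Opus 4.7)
My plan is to produce an explicit decomposition of $f_k(\lambda_1 \nabla \lambda_2 - \lambda_2 \nabla \lambda_1)$ matching the direct sum
\begin{equation*}
\Vk{k+1}{1}(\T{4}) = (P^k(\T{4}))^4 \oplus \left\{ p \in (\tilde{P}^{k+1}(\T{4}))^4 : p \cdot x = 0 \right\},
\end{equation*}
by separating low-degree and top-degree homogeneous pieces on both sides of the product, then invoking Euler's identity on the top-degree piece.

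First I would write $f_k = q_{k-1} + \tilde{p}_k$, splitting off its top-degree homogeneous part so $q_{k-1} \in P^{k-1}(\T{4})$ and $\tilde{p}_k \in \tilde{P}^k(\T{4})$. Since each barycentric coordinate $\lambda_i$ is affine in $x$, the 4-vector $v := \lambda_1 \nabla \lambda_2 - \lambda_2 \nabla \lambda_1$ has components of degree at most $1$. Hence $q_{k-1}\, v$ has components of degree at most $k$, placing it in $(P^k(\T{4}))^4$ and contributing the first summand.

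Second, I would split each barycentric coordinate as $\lambda_i = \bar{\lambda}_i + c_i$, where $\bar{\lambda}_i$ is linear-homogeneous and $c_i$ is a constant; note $\nabla \lambda_i = \nabla \bar{\lambda}_i$ is a constant vector. Expanding the remaining piece yields
\begin{equation*}
\tilde{p}_k\, v = \tilde{p}_k\bigl(\bar{\lambda}_1 \nabla \lambda_2 - \bar{\lambda}_2 \nabla \lambda_1\bigr) + \tilde{p}_k\bigl(c_1 \nabla \lambda_2 - c_2 \nabla \lambda_1\bigr).
\end{equation*}
The second term is $\tilde{p}_k$ (degree $k$) times a constant 4-vector, so it also lies in $(P^k(\T{4}))^4$ and can be absorbed into the first summand. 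The first term has components that are homogeneous of degree $k+1$, so it lies in $(\tilde{P}^{k+1}(\T{4}))^4$. Dotting with $x$ and applying Euler's identity ($x \cdot \nabla \bar{\lambda}_i = \bar{\lambda}_i$, since $\bar{\lambda}_i$ is degree-$1$ homogeneous) gives
\begin{equation*}
x \cdot \tilde{p}_k\bigl(\bar{\lambda}_1 \nabla \lambda_2 - \bar{\lambda}_2 \nabla \lambda_1\bigr) = \tilde{p}_k\bigl(\bar{\lambda}_1\bar{\lambda}_2 - \bar{\lambda}_2\bar{\lambda}_1\bigr) = 0,
\end{equation*}
so this piece belongs to $\{p \in (\tilde{P}^{k+1}(\T{4}))^4 : p \cdot x = 0\}$. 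Combining the three pieces gives an explicit decomposition of $f_k\, v$ certifying membership in $\Vk{k+1}{1}(\T{4})$.

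The only subtle point, and the single step I expect one might overlook, is the separation of each $\lambda_i$ into its linear-homogeneous and constant parts: Euler's identity only detects the homogeneous part, so the constants $c_i$ must be peeled off and stashed in the $(P^k)^4$ summand before the Koszul-style cancellation on the top-degree homogeneous piece can go through. Aside from this bookkeeping, the argument is a routine Koszul-type calculation adapted to affine rather than genuinely homogeneous coordinates; analogous reasoning then covers the remaining choices of $(a,b) \in \{1,\ldots,5\}^2$ used in the definition of $\Phi^r_{ij\ell m}$.
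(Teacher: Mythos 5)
Your proof is correct. It is, however, a genuinely different route from the paper's: the paper disposes of this lemma in one line by citing Lemma~2 of Fuentes et al.\ (with $N=4$, $a=1$, $b=2$), whereas you give a self-contained computation. Interestingly, your argument is essentially the same template the paper \emph{does} write out in full for the 2-form and 3-form analogues (Lemmas~\ref{two_form_lemma} and~\ref{three_form_lemma}): split $f_k = f_{k-1} + \tilde f_k$, split each affine $\lambda_i$ into its constant part $\eta_i$ and linear-homogeneous part $\beta_i \cdot x$, absorb everything of degree at most $k$ into $(P^k(\T{4}))^4$, and verify via Euler's identity that the top-degree homogeneous remainder annihilates $x$. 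Your observation that
\begin{equation*}
x \cdot \bigl(\bar{\lambda}_1 \nabla \lambda_2 - \bar{\lambda}_2 \nabla \lambda_1\bigr) = \bar{\lambda}_1\bar{\lambda}_2 - \bar{\lambda}_2\bar{\lambda}_1 = 0
\end{equation*}
is exactly the Koszul-type cancellation ($C(x)\,x=0$, resp.\ $C(x)=\phi(x)x$) that the paper checks in those two proofs. What your version buys is uniformity and self-containedness — the reader need not consult the external reference, and the three lemmas become visibly instances of one calculation; what the paper's citation buys is brevity and the reassurance that the construction agrees with the established shape-function literature. Your flagged subtlety (peeling off the constants $c_i$ before applying Euler's identity) is real and is handled correctly.
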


\begin{proof}
    The proof follows immediately from Lemma 2 of Fuentes et al.~\cite{fuentes2015orientation}, upon setting the number of dimensions $N = 4$, and the parameters $a = 1$ and $b =2$. 
\end{proof}

Next, consider the H(curl)-conforming interior functions $\Theta_{ij\ell m}^{r}$ of degree $k-1$
\begin{align*}
    &\Theta_{ij\ell m}^{r}(x_1, x_2, x_3, x_4) = \\[1.0ex] &P_{i}\left(\frac{\lambda_{b}}{\lambda_{a} + \lambda_{b}} \right)P_{j}^{2i+1}\left(\frac{\lambda_{c}}{\lambda_{a}+\lambda_{b}+\lambda_{c}} \right) L_{\ell}^{2(i+j+1)}\left(\frac{\lambda_{d}}{\lambda_{a}+\lambda_{b}+\lambda_{c}+\lambda_{d}} \right)  L_{m}^{2(i+j+\ell)} \left(\lambda_{e}\right) \\[1.0ex] 
     &\cdot \left(\lambda_{a} + \lambda_{b}\right)^{i} \left(\lambda_{a} + \lambda_{b} + \lambda_{c} \right)^{j} \left(\lambda_{a} + \lambda_{b} + \lambda_{c} + \lambda_{d} \right)^{\ell} \\[1.0ex]
     &\cdot \big[\lambda_{a} \left(\nabla \lambda_{b} \otimes \nabla \lambda_{c} - \nabla \lambda_{c} \otimes \nabla \lambda_{b} \right) + \lambda_{b} \left(\nabla \lambda_{c} \otimes \nabla \lambda_{a} - \nabla \lambda_{a} \otimes \nabla \lambda_{c} \right) + \lambda_{c} \left(\nabla \lambda_{a} \otimes \nabla \lambda_{b} - \nabla \lambda_{b} \otimes \nabla \lambda_{a} \right)\big],
\end{align*}
where $i \geq 0$, $j\geq 0$, $\ell \geq 1$, $m \geq 1$, and $n = i + j + \ell + m = 2, \ldots, k-1$ are the indexing parameters. In addition, for $r = 1,2,3,4,5,6$ we set $(a,b,c,d,e) = (1,2,3,4,5)$,  $(a,b,c,d,e) = (2,3,4,5,1)$, $(a,b,c,d,e) = (3,4,5,1,2)$, $(a,b,c,d,e) = (4,5,1,2,3)$, $(a,b,c,d,e) = (5,1,2,3,4)$, and $(a,b,c,d,e) = (1,2,4,3,5)$, respectively.

The explicit formula given above for H(curl)-conforming polynomial functions is justified via Lemma~\ref{two_form_lemma}. In this lemma, we focus on the case in which $a = 1$, $b = 2$, and $c=3$ as all other cases are justified using similar arguments.

\begin{lemma}
    The following quantity belongs to the space $\Vk{k+1}{2}(\T{4})$ of H(curl)-conforming functions
    \begin{align}
        \nonumber f_{k}\left(x_1, x_2, x_3, x_4\right) &\big[\lambda_{1} \left(\nabla \lambda_{2} \otimes \nabla \lambda_{3} - \nabla \lambda_{3} \otimes \nabla \lambda_{2} \right) + \lambda_{2} \left(\nabla \lambda_{3} \otimes \nabla \lambda_{1} - \nabla \lambda_{1} \otimes \nabla \lambda_{3} \right) \\[1.0ex] &+ \lambda_{3} \left(\nabla \lambda_{1} \otimes \nabla \lambda_{2} - \nabla \lambda_{2} \otimes \nabla \lambda_{1} \right) \big], \label{curl_function}
    \end{align}
    where $\lambda_{1} = \lambda_{1} (x_1, x_2, x_3, x_4) $, $\lambda_{2} = \lambda_{2} (x_1, x_2, x_3, x_4)$, and $\lambda_{3} = \lambda_{3} (x_1, x_2, x_3, x_4)$  are barycentric coordinates on the pentatope $\T{4}$, and where $f_{k}(x_1, x_2, x_3, x_4) \in P^{k}(\T{4})$.
    \label{two_form_lemma}
\end{lemma}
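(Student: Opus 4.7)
The plan is to reduce the statement to the alternate characterization of the 2-form space provided by Remark~\ref{remark:pent} (applied at order $k+1$), namely
\begin{align*}
\Vk{k+1}{2}(\T{4}) = \mathcal{L}\bigl((P^{k}(\T{4}))^6\bigr) \oplus \bigl\{B \in \mathcal{L}((\tilde{P}^{k+1}(\T{4}))^6) \;\big|\; Bx = 0\bigr\}.
\end{align*}
First, I would denote by $M(x_1,x_2,x_3,x_4)$ the skew-symmetric matrix appearing between the square brackets in \eqref{curl_function}. Because each $\lambda_i$ is affine and each $\nabla\lambda_i$ is constant, the entries of $M$ lie in $P^{1}(\T{4})$, and writing $a_i := \nabla \lambda_i$ and $\lambda_i = c_i + a_i \cdot x$, I would split $M = M^{(0)} + M^{(1)}$ into its constant part (coming from the $c_i$) and its homogeneous-linear part (coming from $a_i \cdot x$). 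Similarly, I would decompose $f_{k} = p_{k-1} + \tilde{p}_k$ with $p_{k-1} \in P^{k-1}(\T{4})$ and $\tilde{p}_k \in \tilde{P}^{k}(\T{4})$.

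Expanding $f_k M$ yields four skew-symmetric cross-terms. The three cross-terms $p_{k-1} M^{(0)}$, $p_{k-1} M^{(1)}$ and $\tilde{p}_k M^{(0)}$ each have entries of total polynomial degree at most $k$; via the isomorphism $\mathcal{L}$ of \eqref{eq:Ldefine}, their sum falls in $\mathcal{L}\bigl((P^{k}(\T{4}))^6\bigr)$, the first summand above. The remaining cross-term $\tilde{p}_k M^{(1)}$ is skew-symmetric with entries in $\tilde{P}^{k+1}(\T{4})$. To place it in the second summand, it suffices to verify that $(\tilde{p}_k M^{(1)})\, x = 0$, which reduces to showing $M^{(1)} x = 0$. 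A direct expansion yields
\begin{align*}
M^{(1)} x = (a_1\!\cdot\! x)\bigl[(a_3\!\cdot\! x)a_2 - (a_2\!\cdot\! x)a_3\bigr] + (a_2\!\cdot\! x)\bigl[(a_1\!\cdot\! x)a_3 - (a_3\!\cdot\! x)a_1\bigr] + (a_3\!\cdot\! x)\bigl[(a_2\!\cdot\! x)a_1 - (a_1\!\cdot\! x)a_2\bigr],
\end{align*}
and the six scalar-triple-product terms pair off into three cancelling pairs thanks to the skew symmetry within each bracket and the cyclic summation over $(1,2,3)$.

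The main obstacle is precisely this cancellation in $M^{(1)} x$. Geometrically, $M$ is a reference Whitney-type 2-form associated with the triangular face spanned by vertices indexed by $\lambda_1,\lambda_2,\lambda_3$, and the required identity is the standard statement that such a form is annihilated by contraction with the Euler/position field $x$, the same algebraic phenomenon that underlies the Koszul complex in the $P^{-}_{k+1}\Lambda^{2}$ construction on simplices. Once $M^{(1)} x = 0$ is verified by the short expansion above, the two-summand decomposition in Remark~\ref{remark:pent} immediately places $f_k M$ in $\Vk{k+1}{2}(\T{4})$, completing the argument with no further appeal to the explicit definition \eqref{rt_pentatope_2}.
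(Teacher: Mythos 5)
Your argument is correct and follows essentially the same route as the paper's own proof: split the skew-symmetric bracket into its constant and homogeneous-linear parts ($A + C(x)$ in the paper's notation, your $M^{(0)} + M^{(1)}$), decompose $f_k = f_{k-1} + \tilde f_k$, sort the resulting products into the two summands of the characterization in Remark~\ref{remark:pent}, and verify $C(x)x = 0$. In fact you make explicit the pairwise cancellation behind $M^{(1)}x = 0$, which the paper dispatches as ``some algebraic manipulations.''
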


\begin{proof}
    Let us recall that
    \begin{align}
        \Vk{k}{2}(\T{4}) :=\VtoM{(P^{k-1}(\T{4}))^6}  \oplus \left\{B\in \VtoM{(\tilde{P}^{k}(\T{4}))^6} \vert B x = 0\right\}. \label{two_form_restate}
    \end{align}
    It remains for us to show that the function in Eq.~\eqref{curl_function} belongs to $\Vk{k+1}{2}(\T{4})$. Towards this end, we introduce the following identities
    \begin{align*}
        \lambda_{i} = \eta_{i} + \beta_{i} \cdot x, \qquad \nabla \lambda_{i} = \beta_{i},
    \end{align*}
    where $\eta_i \in \mathbb{R}$, $\beta_i \in \mathbb{R}^{4}$, and $i = 1, 2, 3$. It immediately follows that
    \begin{align*}
        &\lambda_{1} \left(\nabla \lambda_{2} \otimes \nabla \lambda_{3} - \nabla \lambda_{3} \otimes \nabla \lambda_{2} \right) + \lambda_{2} \left(\nabla \lambda_{3} \otimes \nabla \lambda_{1} - \nabla \lambda_{1} \otimes \nabla \lambda_{3} \right) + \lambda_{3} \left(\nabla \lambda_{1} \otimes \nabla \lambda_{2} - \nabla \lambda_{2} \otimes \nabla \lambda_{1} \right) \\[1.0ex]
        &= \left(\eta_1 + \beta_1 \cdot x\right)\left(\beta_2 \otimes \beta_3 - \beta_3 \otimes \beta_2 \right) + \left(\eta_2 + \beta_2 \cdot x\right)\left(\beta_3 \otimes \beta_1 - \beta_1 \otimes \beta_3\right) \\[1.0ex]
        &+\left(\eta_3 + \beta_3 \cdot x\right) \left(\beta_1 \otimes \beta_2 - \beta_2 \otimes \beta_1\right) = A + C(x),
    \end{align*}
    where 
    \begin{align*}
        A &:= \eta_1 \left(\beta_2 \otimes \beta_3 - \beta_3 \otimes \beta_2\right) + \eta_2 \left(\beta_3 \otimes \beta_1 - \beta_1 \otimes \beta_3\right) + \eta_3 \left(\beta_1 \otimes \beta_2 - \beta_2 \otimes \beta_1 \right), \\[1.0ex]
        C(x) &:= (\beta_1 \cdot x) \left(\beta_2 \otimes \beta_3 - \beta_3 \otimes \beta_2\right) + (\beta_2 \cdot x) \left(\beta_3 \otimes \beta_1 - \beta_1 \otimes \beta_3\right) + (\beta_3 \cdot x) \left(\beta_1 \otimes \beta_2 - \beta_2 \otimes \beta_1 \right).
    \end{align*}
    By inspection, we have that $A \in \VtoM{(P^0(\T{4}))^6}$ and $C(x) \in \VtoM{(\tilde{P}^{1}(\T{4}))^{6}}$. In addition, following some algebraic manipulations, it turns out that $C(x) x = 0$. Therefore 
    \begin{align*}
        C(x) \in \left\{E \in \VtoM{(\tilde{P}^{1}(\T{4}) )^{6}} \, | \, Ex = 0 \right\}.
    \end{align*}
    Next, we can perform the following decomposition
    \begin{align*}
        f_k \in P^{k}(\T{4}) &= P^{k-1}(\T{4}) \oplus \tilde{P}^{k}(\T{4}), \\[1.0ex]
        f_k &= f_{k-1} + \tilde{f}_{k},
    \end{align*}
    where $f_{k-1} \in P^{k-1}(\T{4})$ and $\tilde{f}_{k} \in \tilde{P}^{k}(\T{4})$. As a result, we have that
    \begin{align*}
        f_k \left[A + C(x) \right] = f_k A + f_{k-1} C(x) + \tilde{f}_{k} C(x).
    \end{align*}
    Naturally, by inspection, we have that
    \begin{align*}
        &f_k A + f_{k-1} C(x) \in \VtoM{(P^{k}(\T{4}))^{6}}, \qquad \tilde{f}_{k} C(x) \in \left\{B \in \VtoM{(\tilde{P}^{k+1}(\T{4}))^{6}} | Bx = 0 \right\}.
    \end{align*}
    Based on these identities and the definition in Eq.~\eqref{two_form_restate}, we immediately obtain the desired result
    \begin{align*}
        f_k \left[A + C(x) \right] \in \Vk{k+1}{2}(\T{4}).
    \end{align*}
\end{proof}

Next, consider the H(div)-conforming interior functions $\Psi_{ij\ell m}^{r}$ of degree $k-1$
\begin{align*}
    \Psi_{ij\ell m}^{r}(x_1, x_2, x_3, x_4) = &P_{i}\left(\frac{\lambda_{b}}{\lambda_{a} + \lambda_{b}} \right)P_{j}^{2i+1}\left(\frac{\lambda_{c}}{\lambda_{a}+\lambda_{b}+\lambda_{c}} \right) P_{\ell}^{2(i+j+1)}\left(\frac{\lambda_{d}}{\lambda_{a}+\lambda_{b}+\lambda_{c}+\lambda_{d}} \right)  L_{m}^{2(i+j+\ell)+3} \left(\lambda_{e}\right) \\[1.0ex] 
     &\cdot \left(\lambda_{a} + \lambda_{b}\right)^{i} \left(\lambda_{a} + \lambda_{b} + \lambda_{c} \right)^{j} \left(\lambda_{a} + \lambda_{b} + \lambda_{c} + \lambda_{d} \right)^{\ell} \\[1.0ex]
     &\cdot \big[ \lambda_{a} \left(\nabla \lambda_{b} \times \nabla \lambda_{c} \times \nabla \lambda_{d} \right) - \lambda_{b} \left(\nabla \lambda_{c} \times \nabla \lambda_{d} \times \nabla \lambda_{a} \right) \\[1.0ex] 
     &+ \lambda_{c} \left(\nabla \lambda_{d} \times \nabla \lambda_{a} \times \nabla \lambda_{b} \right) - \lambda_{d} \left(\nabla \lambda_{a} \times \nabla \lambda_{b} \times \nabla \lambda_{c} \right) \big],
\end{align*}
where $i \geq 0$, $j\geq 0$, $\ell \geq 0$, $m \geq 1$, and $n = i + j + \ell + m = 1, \ldots, k-1$ are the indexing parameters. In addition, for $r = 1,2,3,4$ we set $(a,b,c,d,e) = (1,2,3,4,5)$, $(a,b,c,d,e) = (2,3,4,5,1)$, $(a,b,c,d,e) = (3,4,5,1,2)$, and $(a,b,c,d,e) = (4,5,1,2,3)$, respectively.

The explicit formula given above for the H(div)-conforming polynomial functions is justified via Lemma~\ref{three_form_lemma}. In this lemma, we focus on the case in which $a = 1$, $b = 2$, $c=3$, and $d = 4$ as all other cases are justified using similar arguments.

\begin{lemma}
    The following quantity belongs to the space $\Vk{k+1}{3}(\T{4})$ of H(div)-conforming functions
    \begin{align}
        \nonumber f_{k}\left(x_1, x_2, x_3, x_4\right) &\big[\lambda_{1} \left(\nabla \lambda_{2} \times \nabla \lambda_{3} \times \nabla \lambda_{4} \right) - \lambda_{2} \left(\nabla \lambda_{3} \times \nabla \lambda_{4} \times \nabla \lambda_{1} \right) \\[1.0ex] 
     &+ \lambda_{3} \left(\nabla \lambda_{4} \times \nabla \lambda_{1} \times \nabla \lambda_{2} \right) - \lambda_{4} \left(\nabla \lambda_{1} \times \nabla \lambda_{2} \times \nabla \lambda_{3} \right) \big], \label{div_function}
    \end{align}
    where $\lambda_{1} = \lambda_{1} (x_1, x_2, x_3, x_4) $, $\lambda_{2} = \lambda_{2} (x_1, x_2, x_3, x_4)$, $\lambda_{3} = \lambda_{3} (x_1, x_2, x_3, x_4)$, and $\lambda_{4} = \lambda_{4} (x_1, x_2, x_3, x_4)$  are barycentric coordinates on the pentatope $\T{4}$, and where $f_{k}(x_1, x_2, x_3, x_4) \in P^{k}(\T{4})$. \label{three_form_lemma}
\end{lemma}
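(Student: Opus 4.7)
The plan is to mirror almost verbatim the proof of Lemma~\ref{two_form_lemma}, with the 2-form ``null-space'' structure replaced by the 3-form ``proportional-to-$x$'' structure that defines $\Vk{k}{3}(\T{4})$ in Eq.~\eqref{rt_pentatope_3}. Using the affine identities $\lambda_i = \eta_i + \beta_i \cdot x$ and $\nabla \lambda_i = \beta_i$ for $i = 1,2,3,4$, I would first expand the bracketed expression into an $x$-independent piece plus a linear-in-$x$ piece,
\[
B := \sum_{a=1}^{4} (-1)^{a-1}\lambda_a \left(\beta_{a+1} \times \beta_{a+2} \times \beta_{a+3}\right) = A + C(x),
\]
with cyclic reduction of the subscripts modulo $4$, where $A$ collects the contributions from the constants $\eta_a$ and
\[
C(x) := \sum_{a=1}^{4} (-1)^{a-1}(\beta_a \cdot x)\left(\beta_{a+1} \times \beta_{a+2} \times \beta_{a+3}\right).
\]
By inspection, $A \in (P^0(\T{4}))^4$ and $C(x) \in (\tilde{P}^1(\T{4}))^4$.

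The essential identity I would prove next is that
\[
C(x) = c\, x, \qquad c := \det(\beta_1, \beta_2, \beta_3, \beta_4),
\]
which is the 4D analog of the familiar 3D formula expressing a dual basis in terms of cross products. Unlike the matrix identity $C(x)x = 0$ exploited in Lemma~\ref{two_form_lemma}, this does not drop out from a bare algebraic cancellation, so it requires a genuine verification. I would prove it either by evaluating the linear map $x \mapsto C(x)$ on the dual basis $\{\beta^{\ast}_a\}$ defined by $\beta_b \cdot \beta^{\ast}_a = \delta_{ab}$, together with the cofactor identity $\beta^{\ast}_a = (-1)^{a-1} c^{-1}(\beta_{a+1}\times\beta_{a+2}\times\beta_{a+3})$, or by a direct Levi--Civita computation showing that the $(i,m)$-entry of the matrix representing $C$ collapses to $c\,\delta_{im}$ via the standard cofactor expansion of the $4\times 4$ determinant whose columns are the $\beta_a$. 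This determinantal identity is the one genuinely new ingredient in the proof and will be the main obstacle.

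With $C(x) = c\,x$ in hand, the argument concludes in parallel with Lemma~\ref{two_form_lemma}. Decompose $f_k = f_{k-1} + \tilde{f}_k$ with $f_{k-1} \in P^{k-1}(\T{4})$ and $\tilde{f}_k \in \tilde{P}^k(\T{4})$, so that
\[
f_k\, B = \bigl(f_k A + c\, f_{k-1}\, x\bigr) + c\, \tilde{f}_k\, x.
\]
The first summand lies in $(P^k(\T{4}))^4$, since $A$ is constant, $f_{k-1} \in P^{k-1}(\T{4})$, and $x$ has linear entries; the second summand lies in $\tilde{P}^k(\T{4})\cdot x$ by construction. Hence
\[
f_k\, B \in (P^k(\T{4}))^4 \oplus \tilde{P}^k(\T{4})\cdot x = \Vk{k+1}{3}(\T{4})
\]
by Eq.~\eqref{rt_pentatope_3}, as required.
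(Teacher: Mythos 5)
Your proposal is correct and follows essentially the same route as the paper's proof: expand the bracket via $\lambda_i=\eta_i+\beta_i\cdot x$ into $A+C(x)$, establish $C(x)=\beta_1\cdot(\beta_2\times\beta_3\times\beta_4)\,x$, and split $f_k=f_{k-1}+\tilde f_k$ to land each piece in the two summands of $\Vk{k+1}{3}(\T{4})$. If anything, you are more explicit than the paper, which dismisses the key identity $C(x)=c\,x$ with ``after some algebraic manipulations,'' whereas you correctly flag it as the one step needing a genuine (cofactor or Levi--Civita) verification.
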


\begin{proof}
    Let us recall that
    \begin{align}
        \Vk{k}{3}(\T{4}) &:= (P^{k-1}(\T{4}))^4 \oplus \tilde{P}^{k-1}(\T{4}) x. \label{three_form_restate}
    \end{align}
    In addition, the following identities hold
    \begin{align*}
        \lambda_{i} = \eta_i + \beta_i \cdot x, \qquad \nabla \lambda_{i} = \beta_i,
    \end{align*}
    where $\eta_i \in \mathbb{R}$, $\beta_i \in \mathbb{R}^{4}$, and $i = 1, 2, 3, 4$. Next, upon expanding the triple products in Eq.~\eqref{div_function} in terms of these identities, one obtains
    \begin{align*}
        &\lambda_{1} \left(\nabla \lambda_{2} \times \nabla \lambda_{3} \times \nabla \lambda_{4} \right) - \lambda_{2} \left(\nabla \lambda_{3} \times \nabla \lambda_{4} \times \nabla \lambda_{1} \right) + \lambda_{3} \left(\nabla \lambda_{4} \times \nabla \lambda_{1} \times \nabla \lambda_{2} \right) - \lambda_{4} \left(\nabla \lambda_{1} \times \nabla \lambda_{2} \times \nabla \lambda_{3} \right) \\[1.0ex]
        &=\left(\eta_1 + \beta_1 \cdot x \right) \left(\beta_2 \times \beta_3 \times \beta_4\right) - \left(\eta_2 + \beta_2 \cdot x \right) \left(\beta_3 \times \beta_4 \times \beta_1\right) \\[1.0ex]
        &+\left(\eta_3 + \beta_3 \cdot x \right) \left(\beta_4 \times \beta_1 \times \beta_2\right) - \left(\eta_4 + \beta_4 \cdot x \right) \left(\beta_1 \times \beta_2 \times \beta_3\right) = A + C(x),
    \end{align*}
    where
    \begin{align*}
        A &:= \eta_1 \left(\beta_2 \times \beta_3 \times \beta_4\right) - \eta_2 \left(\beta_3 \times \beta_4 \times \beta_1\right) +\eta_3 \left(\beta_4 \times \beta_1 \times \beta_2\right) - \eta_4 \left(\beta_1 \times \beta_2 \times \beta_3\right), \\[1.0ex]
        C(x) &:= \left(\beta_1 \cdot x \right) \left(\beta_2 \times \beta_3 \times \beta_4\right) - \left(\beta_2 \cdot x \right) \left(\beta_3 \times \beta_4 \times \beta_1\right) +\left(\beta_3 \cdot x \right) \left(\beta_4 \times \beta_1 \times \beta_2\right) - \left(\beta_4 \cdot x \right) \left(\beta_1 \times \beta_2 \times \beta_3\right). 
    \end{align*}
    After some algebraic manipulations, we find that
    \begin{align*}
        C(x) = \beta_1 \cdot \left(\beta_2 \times \beta_3 \times \beta_4 \right) x.
    \end{align*}
    By inspection, we have that
    \begin{align*}
        A \in \left(P^{0}(\T{4})\right)^{4}, \qquad C(x) \in \left\{Q \in (\tilde{P}^{1}(\T{4}))^{4} \, | \, Q(x) = \phi(x)x \right\}.
    \end{align*}
     Next, we can perform the following decomposition
    \begin{align*}
        f_k &= f_{k-1} + \tilde{f}_{k},
    \end{align*}
    where $f_{k-1} \in P^{k-1}(\T{4})$ and $\tilde{f}_{k} \in \tilde{P}^{k}(\T{4})$. As a result, we have that
    \begin{align*}
        f_k \left[A + C(x) \right] = f_k A + f_{k-1} C(x) + \tilde{f}_{k} C(x).
    \end{align*}
    Naturally, by inspection
    \begin{align*}
        &f_k A + f_{k-1} C(x) \in (P^{k}(\T{4}))^{4}, \qquad \tilde{f}_{k} C(x) \in \left\{Q \in (\tilde{P}^{k+1}(\T{4}))^{4} \, | \, Q(x) = \phi(x)x \right\}.
    \end{align*}
    Based on these identities and the definition in Eq.~\eqref{three_form_restate}, we immediately obtain the desired result
    \begin{align*}
        f_k \left[A + C(x) \right] \in \Vk{k+1}{3}(\T{4}).
    \end{align*}
\end{proof}

Finally, for the sake of completeness, consider the L2-conforming interior functions $v_{ij\ell m}$ of degree $k-1$
\begin{align*}
    v_{ij\ell m}(x_1, x_2, x_3, x_4) = &P_{i}\left(\frac{\lambda_{2}}{\lambda_{1} + \lambda_{2}} \right)P_{j}^{2i+1}\left(\frac{\lambda_{3}}{\lambda_{1}+\lambda_{2}+\lambda_{3}} \right) P_{\ell}^{2(i+j+1)}\left(\frac{\lambda_{4}}{\lambda_{1}+\lambda_{2}+\lambda_{3}+\lambda_{4}} \right)  P_{m}^{2(i+j+\ell)+3} \left(\lambda_{5}\right) \\[1.0ex] 
     &\cdot \left(\lambda_{1} + \lambda_{2}\right)^{i} \left(\lambda_{1} + \lambda_{2} + \lambda_{3} \right)^{j} \left(\lambda_{1} + \lambda_{2} + \lambda_{3} + \lambda_{4} \right)^{\ell},
\end{align*}
where $i \geq 0$, $j\geq 0$, $\ell \geq 0$, $m \geq 0$, and $n = i + j + \ell + m = 0, \ldots, k-1$ are the indexing parameters.

\begin{remark}
In the above discussion, the Legendre and Jacobi polynomials are critical for developing explicit expressions for the bubble spaces. For the sake of brevity, these polynomials will be not defined in this work, but we encourage the curious reader to consult Fuentes et al.~\cite{fuentes2015orientation} for their precise definitions.
\label{functional_remark}
\end{remark}

\subsection{Degrees of Freedom on the Reference Pentatope, $\T{4}$}

We now return our attention to the sequence of spaces in Eqs.~\eqref{rt_pentatope_0}--\eqref{rt_pentatope_4}. Our objective is to construct degrees of freedom for these spaces. There are already well-known sets of degrees of freedom for simplicial elements using wedge products, as in \cite{Arnold13}, etc. Unfortunately, while the wedge product is mathematically elegant, it is frequently difficult for engineers and programmers to interpret and use for implementation purposes.  In order to address this issue, in this section we provide an alternative, more explicit construction of the degrees of freedom.  In addition, these degrees of freedom are shown to be unisolvent.

To specify the degrees of freedom for an $s$-form on the pentatope, we will make use of the preceding discussions; in particular, several degrees of freedom can be specified by using well-known trace degrees of freedom on $\T{3}$ (tetrahedra), $\T{2}$ (triangles), $\T{1}$ (edges), and $\T{0}$ (vertices). Recall from \autoref{Table:4DelementsSubs} that $\T{4}$ has 5 vertices, 10 edges, 10 triangular faces, and  5 tetrahedral facets. Our task is reduced to specifying the remaining interior degrees of freedom, and ensuring unisolvency.

\subsubsection{Dofs for 0-forms on $\T{4}$ }

The polynomial 0-forms on $\T{4}$ are denoted by $\Vk{k}{0}(\T{4}) := P^k(\T{4})$. This space has dimension 
\begin{align*}
    \text{dim}(\Vk{k}{0}(\T{4}))= {k+4 \choose 4} = \frac{1}{24}(k+1)(k+2)(k+3)(k+4).
\end{align*}
The dual space $\Sk{k}{0}(\T{4})$ must have the same dimension.

We can decompose $\Sk{k}{0}(\T{4})$ into trace and volume degrees of freedom. For the trace degrees of freedom, $\Sigma_{trace}^{k,0}(\T{4})$, we use vertex, edge, face, and facet degrees of freedom from Eqs.~\eqref{eq:edge-0}, \eqref{eq:triangle-0}, and \eqref{eq:tet0}.
The total number of trace degrees of freedom is, therefore
\begin{align*}
	\text{dim}\left(\Sigma_{trace}^{k,0}(\T{4}) \right) &= 5 + 10 \,\text{dim}( P^{k-2}(\T{1}))  + 10 \,\text{dim}( P^{k-3}(\T{2})) + 5 \,\text{dim}( P^{k-4}(\T{3}))\\
	&=5+10{k-1 \choose k-2} + 10{k-1 \choose k-3} + 5{k-1 \choose k-4}\\
	&= \frac{5}{6}k (k^2+5).
\end{align*} 
We can also specify volume degrees of freedom on $\T{4}$ for the 0-form proxy $u$ as follows
\begin{equation}\label{eq:pent0} 
	\Sk{k}{0}_{vol}(\T{4}) := \left\{u \rightarrow \int_{\T{4}} uq, \qquad q \in P^{k-5}(\T{4})\right\}.
\end{equation}
It immediately follows that
\begin{align*}
\text{dim}\left(\Sk{k}{0}_{vol}(\T{4}) \right) = {k-1\choose 4} = \frac{1}{24}(k-4)(k-3)(k-2)(k-1).
\end{align*}

\begin{lemma} The degrees of freedom
	\begin{equation}
		\Sk{k}{0}(\T{4}) := \Sigma_{trace}^{k,0}(\T{4}) \cup \Sk{k}{0}_{vol}(\T{4}),
	\end{equation} form a unisolvent set for $\Vk{k}{0}(\T{4}).$
	
\end{lemma}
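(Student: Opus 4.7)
My plan combines a dimension count with a classical descent argument. The first step is to verify that $\#\Sk{k}{0}(\T{4}) = \dim \Vk{k}{0}(\T{4}) = \binom{k+4}{4}$. The trace portion contributes $\tfrac{5}{6}k(k^{2}+5)$ (as already computed in the statement) and the volume portion contributes $\binom{k-1}{4}$; a direct algebraic expansion confirms that these sum to $\binom{k+4}{4}$. Once the counts match, it suffices to prove that any $u \in P^{k}(\T{4})$ annihilated by every element of $\Sk{k}{0}(\T{4})$ vanishes identically.

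For the vanishing step I would argue by descent through the boundary complex. Let $\mathcal{F}$ be any one of the five tetrahedral facets of $\T{4}$, say the one cut out by $\lambda_{i}=0$. The trace $\Tr{\mathcal{F}}{u}$ lies in $P^{k}(\mathcal{F}) \cong P^{k}(\T{3})$, and the subset of DOFs from $\Sk{k}{0}(\T{4})$ localized on $\mathcal{F}$ or its sub-simplices, namely the point values at its $4$ vertices, the edge moments~\eqref{eq:edge-0}, the triangular-face moments~\eqref{eq:triangle-0}, and the facet moment~\eqref{eq:tet0}, reproduce the classical unisolvent DOF set for the scalar Lagrange element on a tetrahedron. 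Hence $\Tr{\mathcal{F}}{u} \equiv 0$ on each of the five facets. Since $u$ vanishes on each of the five hyperplanes $\{\lambda_{i}=0\}$, a standard divisibility argument shows that $u$ is divisible by the interior bubble $b := \lambda_{1}\lambda_{2}\lambda_{3}\lambda_{4}\lambda_{5}$, which has total degree $5$. Write $u = b\, v$ with $v \in P^{k-5}(\T{4})$. Taking $q = v$ in the volume DOF~\eqref{eq:pent0} yields $\int_{\T{4}} b\, v^{2}\, dx = 0$; since $b > 0$ on the interior of $\T{4}$, this forces $v \equiv 0$, whence $u \equiv 0$.

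The main obstacle is bookkeeping rather than any deep argument: one must confirm that the DOFs inherited from $\Sk{k}{0}(\T{4})$ on each facet $\mathcal{F}$ are exactly the standard Lagrange DOFs on the tetrahedron (so that unisolvency on $\T{3}$ may be invoked), and that the tallies over the $5$ vertices, $10$ edges, $10$ triangular faces, and $5$ tetrahedral facets of $\T{4}$ aggregate consistently with no double counting. Once this combinatorial accounting is complete, the divisibility-plus-volume-moment step is entirely routine, and unisolvency of $\Sk{k}{0}(\T{4})$ follows from the matching dimension count together with the injectivity established above.
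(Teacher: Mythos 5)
Your proposal is correct and follows essentially the same route as the paper: match the dimension count, use the vanishing of the vertex/edge/face/facet trace degrees of freedom to conclude $u$ vanishes on the boundary and hence factors as $u=\lambda_1\lambda_2\lambda_3\lambda_4\lambda_5\,\psi$ with $\psi\in P^{k-5}(\T{4})$, then take $q=\psi$ in the volume moments to force $\psi\equiv 0$. The only difference is that you spell out the facet-by-facet Lagrange unisolvency and the positivity of the bubble, which the paper leaves implicit in the word ``successively.''
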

\begin{proof}
	We begin by noting that
	\begin{align*}
        \text{dim}(\Vk{k}{0}(\T{4})) = \text{dim}(\Sk{k}{0}(\T{4})) = \text{dim}(\Sigma_{trace}^{k,0}(\T{4})) + \text{dim}(\Sk{k}{0}_{vol}(\T{4})).
    \end{align*}
	It will therefore suffice to show that the vanishing of all degrees of freedom for $u$ implies $u=0$.
	
	Suppose that for a particular $u\in \Vk{k}{0}(\T{4})$ that all the degrees of freedom vanish. The vanishing of the trace degrees of freedom means, successively, that $u$ has zero traces on the vertices, edges, faces, and facets of $\T{4}$. It is therefore a bubble function in $\Vkcirc{k}{0}(\T{4})$ and can be expressed as 
    \begin{align*}
	   u= \lambda_1\lambda_2\lambda_3\lambda_4\lambda_5 \psi, \qquad \psi \in P^{k-5}(\T{4}).
    \end{align*}
	
    Now, since all degrees of freedom of the form given by Eq.~\eqref{eq:pent0} also vanish, upon setting $q=\psi$ we see that $\psi \equiv 0$. This establishes that $u\equiv 0$.
\end{proof}

\subsubsection{Dofs for 1-forms on $\T{4}$ }
We recall that
\begin{align*}
    \Vk{k}{1}(\T{4}) &:= ({P}^{k-1}(\T{4}))^4 \oplus \left\{ p\in (\tilde{P}^k(\T{4}))^4 \vert p \cdot x =0\right\}.
\end{align*}
Also, we note that any polynomial in $\tilde{P}^{k+1}(\T{4})$ can be written as $p \cdot x$ for $p \in (\tilde{P}^k(\T{4}))^4.$ Therefore, the polynomial 1-forms on $\T{4}$ have the following dimension
\begin{align*}
\text{dim}(\Vk{k}{1}(\T{4})) &= \text{dim} ((P^{k-1}(\T{4}))^4)+ \text{dim}( (\tilde{P}^k(\T{4}))^4) - \text{dim}(\tilde{P}^{k+1}(\T{4})) \\
&= 4{k+3 \choose 4} + 4 {k+3 \choose 3}- {k+4 \choose 3} = \frac{1}{6}k(k+2)(k+3)(k+4).
\end{align*}
The dual space $\Sk{k}{1}(\T{4})$ must have the same dimension.

We decompose $\Sk{k}{1}(\T{4})$ into the trace and volume degrees of freedom. For the trace degrees of freedom, $\Sigma_{trace}^{k,1}(\T{4})$, we use edge, face, and facet degrees of freedom from Eqs.~\eqref{eq:edge-1}, \eqref{eq:triangle-1}, and \eqref{eq:tet1}.
The total number of trace degrees of freedom is, therefore
\begin{align*}
	\text{dim}( \Sigma_{trace}^{k,1}(\T{4})) &=  10 \, \text{dim}( P^{k-1}(\T{1}))  + 10 \, \text{dim}( (P^{k-2}(\T{2}))^2) + 5   \, \text{dim}( (P^{k-3}(\T{3}) )^3)\\
	&=10{k \choose 1} + 20{k\choose 2} + 15{k \choose 3}\\
	&= \frac{5}{2}k (k^2+k+2).
\end{align*} 
We can also specify volume degrees of freedom on $\T{4}$ for the 1-form proxy $E$ as follows
\begin{equation}\label{eq:pent1} 
	\Sk{k}{1}_{vol}(\T{4}) := \left\{ \int_{\T{4}} E \cdot q, \qquad q \in (P^{k-4}(\T{4}))^4\right\}.
\end{equation}
The corresponding dimension is
\begin{align*}
    \text{dim}\left(\Sk{k}{1}_{vol}(\T{4}) \right) = 4{k\choose 4} = \frac{1}{6}(k-3)(k-2)(k-1)k.
\end{align*}
We see that
\begin{align*}
\text{dim}(\Vk{k}{1}(\T{4})) = \text{dim}(\Sk{k}{1}(\T{4})) = \text{dim}(\Sigma_{trace}^{k,1}(\T{4})) + \text{dim}(\Sk{k}{1}_{vol}(\T{4})),
\end{align*}
from which unisolvency will follow if we can show that the only element of $\Vk{k}{1}(\T{4})$ with vanishing degrees of freedom is the zero element. To establish this, we follow the argument in Lemma 5.36 of~\cite{monkbook}.

During the proof of unisolvency of 1-forms, for ease of exposition, we work on a pentatope $K$ whose vertices are
\begin{align} \label{eq:Kvertices}(0,0,0,0), (1,0,0,0), (0,1,0,0), (0,0,1,0), (0,0,0,1).
\end{align} 
There exists an affine map between the reference pentatope $\T{4}$ and the element $K$, and hence the polynomial spaces $\Vk{k}{s}(\T{4})$ are easily defined on $K$. In what follows, we show that the degrees of freedom, $\Sk{k}{1}(K)$, are unisolvent.

The strategy of the proof is as follows: we first show that if $E\in \Vk{k}{1}(K)$ satisfies $\mathrm{skwGrad}(E)=0$ then $E=\mathrm{grad}(p)$ for some scalar $p\in P^k(K)$. Next, we show that if all the degrees of freedom of $E\in \Vk{k}{1}(K)$ vanish, then $\mathrm{Tr}(\mathrm{skwGrad}(E))=0$ on the facets of $K$.  Furthermore, we show that $\mathrm{skwGrad}(E)=0$ on the entirety of $K$. Based on our first result (above), it immediately follows that $E=\mathrm{grad}(p)$. 
We finally show that the vanishing of volume degrees of freedom for $E$ implies that~$p=0$. 

\begin{lemma}
    If $E \in \Vk{k}{1}(K)$ satisfies $ \mathrm{skwGrad} (E) =0$ then $E \equiv \mathrm{grad}(p)$ for some $p \in P^k(K)$.
    \label{p_grad_lemma}
\end{lemma}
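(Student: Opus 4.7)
The plan is to invoke the Poincar\'e lemma for the contractible element $K$ to obtain a smooth scalar potential, and then show that this potential must in fact be a polynomial of degree at most $k$ by exploiting the constraint $p \cdot x = 0$ that defines the second summand of $\Vk{k}{1}(K)$.

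In more detail, I would first observe that by the correspondence $\Upsilon_1 d^{(1)} \omega = \mathrm{skwGrad}(\Upsilon_1 \omega)$ from Section~\ref{sobolev_sec}, the assumption $\mathrm{skwGrad}(E) = 0$ says that the corresponding $1$-form $\omega$ is closed. Since $K$ is star-shaped (in particular contractible), the Poincar\'e lemma furnishes a smooth $0$-form $\eta$ with $d^{(0)} \eta = \omega$, i.e. $E = \mathrm{grad}(p)$ for some smooth scalar $p$ defined on $K$. Since $E$ is a polynomial of total degree at most $k$ in $x_1,x_2,x_3,x_4$, each component $\partial_i p \in P^k(K)$, and therefore (up to an additive constant, which we fix to zero) $p \in P^{k+1}(K)$.

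The main step, which is also the main obstacle, is to rule out the top-degree part of $p$. Decompose $p = p_k + \tilde{p}_{k+1}$ with $p_k \in P^k(K)$ and $\tilde{p}_{k+1} \in \tilde{P}^{k+1}(K)$. Correspondingly, split $E = E^{(1)} + E^{(2)}$ according to the direct sum $\Vk{k}{1}(K) = (P^{k-1}(K))^4 \oplus \{q \in (\tilde{P}^k(K))^4 \mid q \cdot x = 0\}$, so that $E^{(2)}$ is the unique $(\tilde{P}^k(K))^4$-component satisfying $E^{(2)} \cdot x = 0$. Matching homogeneous degrees in $E = \mathrm{grad}(p)$ yields $E^{(2)} = \mathrm{grad}(\tilde{p}_{k+1})$, since $\mathrm{grad}(p_k)$ has total degree at most $k-1$ and hence lies in $(P^{k-1}(K))^4$. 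The constraint $E^{(2)} \cdot x = 0$ combined with Euler's identity for homogeneous functions gives
\begin{align*}
0 = E^{(2)} \cdot x = \mathrm{grad}(\tilde{p}_{k+1}) \cdot x = (k+1)\,\tilde{p}_{k+1},
\end{align*}
so that $\tilde{p}_{k+1} \equiv 0$ and therefore $p = p_k \in P^k(K)$, which is what we need.

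The use of Euler's identity here depends on $x$ being the radial position vector measured from the origin, which is precisely why the proof is carried out on the particular element $K$ with vertices chosen as in Eq.~\eqref{eq:Kvertices} (one vertex being the origin); this is also why the decomposition of $\Vk{k}{1}$ in Eq.~\eqref{rt_pentatope_1} is meaningful on this $K$. The only subtlety beyond this is ensuring that the homogeneous-degree decomposition of $E = \mathrm{grad}(p)$ is indeed the sum we claim, but this follows immediately from uniqueness of the direct-sum decomposition of polynomials into homogeneous components.
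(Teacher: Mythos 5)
Your proposal is correct and follows essentially the same route as the paper: obtain $E=\mathrm{grad}(p)$ with $p\in P^{k+1}(K)$ from the vanishing of $\mathrm{skwGrad}(E)$, split off the homogeneous degree-$(k+1)$ part of $p$, use the constraint $q\cdot x=0$ in the second summand of Eq.~\eqref{rt_pentatope_1} to get $\mathrm{grad}(\tilde p_{k+1})\cdot x=0$, and conclude via Euler's identity that $\tilde p_{k+1}=0$. The only difference is presentational: you make explicit the Poincar\'e-lemma step and the matching of homogeneous components with the direct-sum decomposition, which the paper leaves implicit.
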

\begin{proof} 
    This proof follows closely the analogous proof for the tetrahedron in   Lemma 5.28 of \cite{monkbook}. We first observe that if $E \in \Vk{k}{1}(K), E \in (P^k(K))^4$. Moreover, $ \mathrm{skwGrad} (E) =0 \Rightarrow E = \mathrm{grad}(p)$ for some $p\in P^{k+1}(K)$. Now, we need to show that $p\in P^{k}(K)$. 

    We can decompose $p$ such that $p= p_1+p_2$, where $p_1\in P^k(K)$ and $p_2\in \tilde{P}^{k+1}(K).$ However, the form of $\Vk{k}{1}(K)$ in Eq.~\eqref{rt_pentatope_1} forces $\mathrm{grad}(p_2) \cdot x =0$. Since $p_2$ is homogeneous, $x\cdot \mathrm{grad}(p_2) = (k+1) p_2 =0$, and therefore $E= \mathrm{grad}(p)$ for some $p \in P^k(K)$. 
\end{proof}
The implication of the previous lemma is that while a generic $w \in \Vk{k}{1}(K)$ could contain homogeneous polynomials of degree $k$, if it satisfies $ \mathrm{skwGrad}(w) =0$, then $w$ must be the gradient of a degree-$k$ form. Hence $w\in (P^{k-1}(K))^4$. 

We next show that the vanishing of all dofs for a polynomial 1-form $E$ on $K$ implies that not only the trace of $E$ but also  $\mathrm{Tr}(\mathrm{skwGrad}(E))$ vanishes on the facets.
\begin{lemma}
    Let $E\in \Vk{k}{1}(K)$ be a polynomial 1-form for which all the degrees of freedom $\Sk{k}{1}(K)$ vanish. Then $\mathrm{Tr}(\mathrm{skwGrad}(E))\equiv 0$ on the facets of $K$.
\end{lemma}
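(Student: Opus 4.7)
The plan is to reduce to the classical three-dimensional Nedelec theory. First, I would show that the tangential trace $\mathrm{Tr}[\mathcal{F}](E)$ itself vanishes on every tetrahedral facet $\mathcal{F}$ of $K$. Then I would observe that $\mathrm{Tr}[\mathcal{F}](\mathrm{skwGrad}(E))$ is determined purely by $\mathrm{Tr}[\mathcal{F}](E)$ and its tangential derivatives, so that its vanishing follows immediately.

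Fix a tetrahedral facet $\mathcal{F}$. Since $E \in \Vk{k}{1}(K) = (P^{k-1}(K))^4 \oplus \{p \in (\tilde{P}^k(K))^4 : p \cdot x = 0\}$, a direct computation shows that the 3-vector representing $\mathrm{Tr}[\mathcal{F}](E)$ on $\mathcal{F}$ lies in the 3D Nedelec first-kind space $\Vk{k}{1}(\mathcal{F}) = P_k^-\Lambda^1(\mathcal{F})$ on the tetrahedron. By construction, the edge dofs from Eq.~\eqref{eq:edge-1} on the six edges of $\mathcal{F}$, the face dofs from Eq.~\eqref{eq:triangle-1} on the four triangular faces of $\mathcal{F}$, and the facet dofs from Eq.~\eqref{eq:tet1} on $\mathcal{F}$ itself together form a complete unisolvent set of Nedelec first-kind dofs for $\Vk{k}{1}(\mathcal{F})$ (this is the classical 3D unisolvency theorem; see e.g. Theorem 5.37 of \cite{monkbook}). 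Since the hypothesis of the lemma makes all these inherited dofs vanish, 3D unisolvency yields $\mathrm{Tr}[\mathcal{F}](E) \equiv 0$ on $\mathcal{F}$.

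For the second step, work in an affine frame adapted to $\mathcal{F}$, so that $\mathcal{F} \subset \{x_4 = 0\}$ and $n = [0,0,0,1]^T$. Formula~\eqref{trace_formula_two} then identifies $\mathrm{Tr}[\mathcal{F}](\mathrm{skwGrad}(E))$ with a 3-vector whose entries are each of the form $\partial_j E_i - \partial_i E_j$ with $i, j \in \{1,2,3\}$, restricted to $\mathcal{F}$. Each such entry involves only tangential derivatives of the tangential components $E_1, E_2, E_3$ on $\mathcal{F}$, and therefore depends only on $\mathrm{Tr}[\mathcal{F}](E)$. Since the latter is identically zero on $\mathcal{F}$, so is $\mathrm{Tr}[\mathcal{F}](\mathrm{skwGrad}(E))$. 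Because $\mathcal{F}$ was arbitrary, the lemma follows.

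The main technical point I expect to grind on is the inclusion $\mathrm{Tr}[\mathcal{F}](\Vk{k}{1}(K)) \subseteq \Vk{k}{1}(\mathcal{F})$, which is what makes the appeal to 3D Nedelec unisolvency legitimate. This reduces to checking that if $p \in (\tilde{P}^k(K))^4$ satisfies $p \cdot x = 0$ in 4D, then the 3-vector representing its tangential trace on $\mathcal{F}$ sits in the analogous homogeneous subspace $\{q \in (\tilde{P}^k(\mathcal{F}))^3 : q \cdot x = 0\}$ on the tetrahedron; this is a standard trace property of the $P_k^-\Lambda^s$ family but deserves explicit verification in the present notation. An alternative route that avoids this reduction would be a direct test-function argument on $\mathcal{F}$: integrate $\mathrm{Tr}[\mathcal{F}](\mathrm{skwGrad}(E))$ against test fields in an appropriate polynomial space on $\mathcal{F}$, apply Stokes' theorem on $\mathcal{F}$ to move derivatives onto $E$, and combine the vanishing of edge and triangular-face dofs along $\partial \mathcal{F}$ with the vanishing facet dofs on $\mathcal{F}$ to conclude.
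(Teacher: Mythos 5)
Your proposal is correct and follows essentially the same route as the paper: establish $\mathrm{Tr}[\mathcal F](E)\equiv 0$ on each tetrahedral facet from the vanishing of the inherited 3D N\'ed\'elec trace dofs, then use the fact that $\mathrm{Tr}[\mathcal F](\mathrm{skwGrad}(E))$ is the surface curl of $\mathrm{Tr}[\mathcal F](E)$ to conclude. The only cosmetic difference is that the paper wraps the second step in an integration by parts on $\mathcal F$ (your ``alternative route''), whereas you note directly that the tangential derivatives of an identically vanishing trace vanish; you are also somewhat more explicit than the paper about the inclusion $\mathrm{Tr}[\mathcal F]\bigl(\Vk{k}{1}(K)\bigr)\subseteq \Vk{k}{1}(\mathcal F)$ that justifies the appeal to 3D unisolvency.
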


\begin{proof}
    Since all the dofs for $E$ vanish, then in particular those associated with the traces vanish. 
    Moreover, the trace of $E$ on to any facet $\mathcal{F}$ is a $1$-form on this tetrahedron, and $\mathrm{Tr}(E)$ vanishes on $\mathcal{F}$. We now integrate by parts on $\mathcal{F}$ to see that
    \begin{align*}
    \int_\mathcal{F} q \cdot \mathrm{Tr}(\mathrm{skwGrad}(E)) \, dx = \int_\mathcal{F} q\cdot \nabla \times(\mathrm{Tr}(E)) \, dx = \int_\mathcal{F} \left(\nabla \times q \right) \cdot \mathrm{Tr}(E) \, dx =0.
    \end{align*}
    This equation holds for any sufficiently smooth $q$, and in particular for $q  \in (P^{k-1}(\mathcal{F}))^3$. Choosing $q = \mathrm{Tr}(\mathrm{skwGrad}(E))$ on $\mathcal{F}$ shows that $\mathrm{Tr}(\mathrm{skwGrad}(E))=0$ on $\mathcal{F}$.
\end{proof}

The next theorem uses the previous lemmas to establish unisolvency.
\begin{theorem}
	Let $E\in \Vk{k}{1}(K)$ be a polynomial 1-form for which all the degrees of freedom $\Sk{k}{1}(K)$ vanish. Then $E \equiv 0$.
\end{theorem}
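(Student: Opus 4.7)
The plan is to combine the two preceding lemmas with a careful use of the volume degrees of freedom. The argument proceeds in three stages: first, show that $\mathrm{skwGrad}(E)\equiv 0$ on $K$; second, invoke Lemma~\ref{p_grad_lemma} to write $E=\mathrm{grad}(p)$ for some $p\in P^{k}(K)$ and show $p\vert_{\partial K}\equiv 0$; third, use the volume dofs a final time to deduce $p\equiv 0$.

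For the first stage, note that because all trace dofs of $E$ vanish, $\mathrm{Tr}(E)=0$ on every facet of $K$, so the boundary term in the integration-by-parts identity~\eqref{ibp_one_C} drops for every 2-form $F$. Picking $F$ of polynomial degree chosen so that $\mathrm{Div}(F)\in (P^{k-4}(K))^{4}$ (i.e.\ $F\in \mathcal{L}((P^{k-3}(K))^{6})$), the vanishing of the volume dofs of $E$ yields $\int_{K}\mathrm{skwGrad}(E):F\,dx=0$. Combined with the fact that the face and facet trace dofs of $\mathrm{skwGrad}(E)$ vanish (via the preceding lemma on facets and an analogous IBP argument on faces), this orthogonality -- dual to the dofs of $\Vk{k}{2}(K)$ -- forces $\mathrm{skwGrad}(E)\equiv 0$ on $K$.

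By Lemma~\ref{p_grad_lemma} we then have $E=\mathrm{grad}(p)$ for some $p\in P^{k}(K)$, so in particular $E\in (P^{k-1}(K))^{4}$. The edge dof identity $\int_{e}\partial_{\tau}p\,q = 0$ for all $q\in P^{k-1}(e)$, together with the fact that $\partial_{\tau}p\vert_{e}$ already lies in $P^{k-1}(e)$, forces $\partial_{\tau}p\equiv 0$ on every edge, so $p$ is a single constant along each edge and, by connectedness of the edge graph of the pentatope, a single constant on every edge of $K$. Subtracting this constant (which leaves $E=\mathrm{grad}(p)$ unchanged) we may assume $p$ vanishes on every edge. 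Iterating the same style of IBP argument on each triangular face and each tetrahedral facet -- writing $p\vert_{f}$ (resp.\ $p\vert_{\mathcal F}$) as the face bubble (resp.\ facet bubble) times a lower-order interior polynomial and using surjectivity of the associated divergence -- successively kills $p$ on every face and every facet. Hence $p\vert_{\partial K}\equiv 0$, and we may write $p = \lambda_{1}\lambda_{2}\lambda_{3}\lambda_{4}\lambda_{5}\,\psi$ for some $\psi\in P^{k-5}(K)$.

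Finally, for each $q\in (P^{k-4}(K))^{4}$, the vanishing of the volume dof of $E$ combined with $p\vert_{\partial K}=0$ yields
\begin{equation*}
0 \;=\; \int_{K}\mathrm{grad}(p)\cdot q\,dx \;=\; -\int_{K}p\,\mathrm{div}(q)\,dx.
\end{equation*}
Since $\mathrm{div}\colon(P^{k-4}(K))^{4}\to P^{k-5}(K)$ is surjective (e.g., set three components of $q$ to zero and antidifferentiate the fourth in $x_{1}$), we may pick $q$ with $\mathrm{div}(q)=\psi$, giving $\int_{K}\lambda_{1}\lambda_{2}\lambda_{3}\lambda_{4}\lambda_{5}\,\psi^{2}\,dx = 0$. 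Positivity of $\lambda_{1}\lambda_{2}\lambda_{3}\lambda_{4}\lambda_{5}$ on the interior of $K$ forces $\psi\equiv 0$, so $p\equiv 0$ and $E\equiv 0$. The main obstacle will be the first stage: bridging from the vanishing of facet traces of $\mathrm{skwGrad}(E)$ and the orthogonality against $\mathcal{L}((P^{k-3}(K))^{6})$ to the full interior-vanishing $\mathrm{skwGrad}(E)\equiv 0$ effectively invokes -- or must be developed alongside -- the corresponding 2-form unisolvency theorem for $\Vk{k}{2}(K)$.
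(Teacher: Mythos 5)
Your stages 2 and 3 are sound, and stage 3 is a legitimate variant of the paper's end-game: the paper writes $p=x_1x_2x_3x_4\hat r$ (using only the four coordinate facets) and tests the volume dofs against the explicit vector with components $x_i\hat r_{x_i}+\hat r$, whereas you integrate by parts (using $p\vert_{\partial K}=0$ on all five facets) and exploit surjectivity of $\mathrm{div}\colon (P^{k-4}(K))^4\to P^{k-5}(K)$. Both routes end in a weighted sum of squares and both are correct.

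The genuine gap is exactly where you flag it: stage 1 never proves $\mathrm{skwGrad}(E)=0$; it only reduces that claim to the unisolvency of $\Sk{k}{2}(K)$ on $\Vk{k}{2}(K)$, which is the next (and comparably hard) theorem in this development and which you do not establish. As a standalone argument the proof is therefore incomplete, and the reduction is also heavier than necessary. The paper closes this step directly and elementarily: writing $B:=\mathrm{skwGrad}(E)=\VtoM{[B_{12},B_{13},B_{14},B_{23},B_{24},B_{34}]^T}$ and applying the facet-trace lemma on the coordinate hyperplanes of $K$ (on $x_4=0$ the trace formula \eqref{trace_formula_two} forces $B_{12}=B_{13}=B_{23}=0$ there; on $x_3=0$ it forces $B_{12}=B_{14}=B_{24}=0$ there; and so on), each component acquires the complementary coordinate bubble, e.g.\ $B_{12}=x_3x_4\,r_{12}$ with $r_{ij}\in P^{k-3}(K)$. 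Substituting $F=\VtoM{[r_{12},\dots,r_{34}]^T}$ into your orthogonality relation $\int_K F:\mathrm{skwGrad}(E)\,dx=0$ then yields $\int_K\bigl(x_3x_4r_{12}^2+\dots+x_1x_2r_{34}^2\bigr)\,dx=0$, and non-negativity of the coordinates on $K$ forces every $r_{ij}=0$, hence $\mathrm{skwGrad}(E)=0$. This bubble-plus-positivity argument is the missing piece; with it your proof closes without any appeal to the 2-form theorem.
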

\begin{proof}

In accordance with Eq.~\eqref{ibp_one_C}
\begin{align}
 \nonumber \left(\text{tr}^{(1)} E \right)(F) &= \frac{1}{2} \int_{\partial K}  \left[  E \otimes n - n\otimes E\right] : F \, ds \\[1.0ex]
    &= \int_{K} \left( \text{Div} \, F \right) \cdot E \, dx - \int_{K} F : \left(\text{skwGrad} \, E \right) \, dx,  \label{trace_one_exp}
\end{align}
for $F \in H(\mathrm{Div},K,\mathbb{K})$. We note that if $F \in \VtoM{(P^{k-3}(K))^6}$ then it is automatically in $H(\mathrm{Div},K,\mathbb{K})$. 
Since $\mathrm{tr}(E)=0$, we have
\begin{align*}
   \int_{K} \left( \text{Div} \, F \right) \cdot E \, dx = \int_{K} F : \left(\text{skwGrad} \, E \right) \, dx,
\end{align*} 
for each $F \in \VtoM{(P^{k-3}(K))^6}$. Since the volumetric degrees of freedom vanish, we can set $q = \mathrm{Div} \,F$ in Eq.~\eqref{eq:pent1}, and obtain the following 
\begin{equation}  \label{eq:skwgraduf}\int_{K} F : \left(\text{skwGrad} \, E \right) \, dx=0 \qquad \forall F \in \VtoM{(P^{k-3}(K))^6}.
\end{equation}
Now, let $\mathcal{F}$ be a tetrahedral facet of the element $K$. Using the previous lemma establishes that $\mathrm{skwGrad}(E)$ has vanishing traces on $\mathcal{F}$. Let us denote $B:=\mathrm{skwGrad}(E) = \VtoM{\begin{bmatrix}
	B_{12}, B_{13}, B_{14},  B_{23}, B_{24}, B_{34}
	\end{bmatrix}^T}.$  Consider the trace on to the facet on the hyperplane $x_4=0$, (see Eq.~\eqref{trace_formula_two}). Since
 \begin{align*}
 \mathrm{tr}(B) =2\begin{bmatrix}
	B_{23}(x_1,x_2,x_3,0)\\[1.0ex] -B_{13}(x_1,x_2,x_3,0)\\[1.0ex] B_{12}(x_1,x_2,x_3,0)\\[1.0ex]  0
	\end{bmatrix} =0,
\end{align*}
 it follows that $B_{23}(x_1, x_2, x_3,0)= B_{13}(x_1, x_2, x_3,0)= B_{12}(x_1, x_2, x_3,0)=0.$
Similarly, the trace of $B$ on to the plane $x_3=0$ vanishes, from which we see $B_{12}(x_1,x_2,0,x_4)=
B_{14}(x_1,x_2,0,x_4) = B_{24}(x_1,x_2,0,x_4)=0$. Consequently, $B_{12}(x_1,x_2,x_3,x_4)=x_3 x_4 r_{12}$ for some $r_{12}\in P^{k-3}(K)$. Similar considerations on all the other facets imply that 
\begin{align*}
 \mathrm{skwGrad}(E)=B = \VtoM{\begin{bmatrix}
	x_3 x_4 r_{12}\\[1.0ex]
	x_2 x_4 r_{13}\\[1.0ex]
	x_2 x_3 r_{14}\\[1.0ex]
	x_1 x_4 r_{23}\\[1.0ex]
	x_1 x_3 r_{24}\\[1.0ex]
	x_1 x_2 r_{34}
	\end{bmatrix}}, \qquad r_{ij}\in P^{k-3}(K), \qquad \text{for} \quad i = 1, 2, 3,4, \quad j = 1, 2, 3, 4.
 \end{align*}
 But then choosing $ F= \VtoM{\begin{bmatrix}
r_{12},
r_{13},
r_{14},
r_{23},
r_{24},
r_{34}
\end{bmatrix}^T}$ in Eq.~\eqref{eq:skwgraduf}, we get that
\begin{align*}
&0 = \int_{K} F : \left(\text{skwGrad} \, E \right) \, dx \\[1.0ex]
&= \int_K \left(x_3 x_4 r_{12}^2+x_2 x_4 r_{13}^2+ x_2 x_3 r_{14}^2+ x_1 x_4 r_{23}^2+x_1 x_3 r_{24}^2+ x_1 x_2 r_{34}^2 \right) dx,
\end{align*}
from which it follows that $r_{ij}=0$, (as the products of the form $x_3 x_4$, $x_2 x_4$, etc. are strictly non-negative on $K$). From this we conclude that $B=\mathrm{skwGrad}(E)=0$ in $K$, and consequently from Lemma~\ref{p_grad_lemma}, $E=\mathrm{grad}(p)$ for some $p\in P^{k}(K)$. Since the traces of $E$ vanish, we can choose $p=0$ on the facets, faces, edges, and vertices of $K$, which allows us to write
\begin{align*}
p = x_1 x_2 x_3 x_4 \hat{r}, \quad \hat{r}\in P^{k-4}(K).
\end{align*}
But since  the volumetric degrees of freedom of $E$ vanish, we can pick 
\begin{align*}
q = \begin{bmatrix}
x_1 \partial_{1} (\hat{r}) + \hat{r}\\[1.0ex]
x_2 \partial_{2} (\hat{r}) + \hat{r}\\[1.0ex]
x_3 \partial_{3} (\hat{r}) + \hat{r}\\[1.0ex]
x_4 \partial_{4} (\hat{r}) + \hat{r}
\end{bmatrix} = \begin{bmatrix}
x_1 \hat{r}_{x_1} + \hat{r}\\[1.0ex]
x_2 \hat{r}_{x_2} + \hat{r}\\[1.0ex]
x_3 \hat{r}_{x_3} + \hat{r}\\[1.0ex]
x_4 \hat{r}_{x_4} + \hat{r}
\end{bmatrix},
\end{align*}
in Eq.~\eqref{eq:pent1}, in order to obtain
\begin{align*}
&0=\int_K E \cdot q \, dx = \int_K\mathrm{grad}(p)\cdot q \, dx \\[1.0ex]
&= \int_K \left( x_2 x_3 x_4(x_1\hat{r}_{x_1} + \hat{r})^2+  x_1 x_3 x_4(x_2 \hat{r}_{x_2} + \hat{r})^2+
x_1 x_2 x_4(x_3 \hat{r}_{x_3} + \hat{r})^2+
x_1 x_2 x_3(x_4 \hat{r}_{x_4} + \hat{r})^2 \right) dx.
\end{align*}
All the coordinate functions of $x_1, x_2, x_3, x_4$ are non-negative in $K$. Therefore, the integral above only vanishes if  $(x_1\hat{r}_{x_1}+\hat{r})=0$, $(x_2\hat{r}_{x_2}+\hat{r})=0$, etc.. This in turn is impossible unless $\hat{r}=0$. But then $p$, and consequently $ E= \mathrm{grad}(p)$ vanishes.
\end{proof}

\subsubsection{Dofs for 2-forms on $\T{4}$ }
The polynomial 2-forms on $\T{4}$ are associated with skew-symmetric matrices
\begin{align*}
	\Vk{k}{2}(\T{4}) &= \VtoM{(P^{k-1}(\T{4}))^6} \oplus \span\{ \tilde{P}^{k-1}(\T{4})B_1+\tilde{P}^{k-1}(\T{4})B_2+\tilde{P}^{k-1}(\T{4})B_3+\tilde{P}^{k-1}(\T{4})B_4\}\\
	&=\VtoM{(P^{k-1}(\T{4}))^6} \oplus  \left\{ B \in \mathcal{L}\left((\tilde{P}^k(\T{4}))^6 \right) \vert B x= 0 \right\}.
\end{align*} 
The dimension of the second space above is the same as that of $(\tilde{P}^{k-1}(\T{4}))^3 + \tilde{P}^{k-1}(\T{3})$. Please consult~\ref{pent_exp} for proof of this fact. Altogether, the dimension of the entire space is
\begin{align*}
\text{dim}\left(\Vk{k}{2}(\T{4})\right) &= 6 {k+3 \choose k-1} + 3 { k+2 \choose 3} + {k+1 \choose 2}\\
&=\frac{1}{4}k\left(k^3 + 8k^2 + 19k + 12\right).
\end{align*}

Face and facet traces are well-defined for polynomial 2-forms on $\T{4}$. Therefore, we specify the degrees of freedom corresponding to $\Vk{k}{2}(\T{4})$ as 
\begin{align*}
    \Sk{k}{2}(\T{4}) = \Sk{k}{2}_{vol}(\T{4}) \cup \Sigma_{trace}^{k,2}(\T{4}),
\end{align*}
where $\Sigma_{trace}^{k,2}(\T{4})$ are the trace degrees of freedom corresponding to the 10 triangular faces and 5 tetrahedral facets, as given by Eqs.~\eqref{eq:triangle-2} and \eqref{eq:tet2}. The dimension of this space is
\begin{align*}
\text{dim}\left(\Sigma_{trace}^{k,2}(\T{4}) \right) &= 10\,\text{dim}( P^{k-1}(\T{2}) )  + 5 \, \text{dim} \left((P^{k-2}(\T{3}))^{3}\right) \\
&= 10{k+1\choose 2} + 15 {k+1 \choose 3} \\
&= \frac{5}{2}k(k^2+2k+1).
\end{align*}
We can also specify volume degrees of freedom on $\T{4}$ for a 2-form proxy $F$ as
\begin{equation}\label{eq:pent2} 
	\Sk{k}{2}_{vol}(\T{4}) := \left\{ \int_{\T{4}} F : q, \qquad q \in 
	\mathcal{L}\left((P^{k-3}(\T{4}))^6 \right)\right\}.
\end{equation}
The dimension of this space is
\begin{align*}
    \text{dim} \left(\Sk{k}{2}_{vol}(\T{4}) \right) = 6{k+1\choose 4} = \frac{1}{4}(k-2)(k-1)k(k+1).
\end{align*}
It can easily be confirmed that
\begin{align*}
\text{dim} \left(\Vk{k}{2}(\T{4}) \right) = \text{dim}\left(\Sk{k}{2}(\T{4}) \right) = \text{dim} \left(\Sigma_{trace}^{k,2}(\T{4})\right) + \text{dim} \left(\Sk{k}{2}_{vol}(\T{4}) \right).
\end{align*}
Once again, unisolvency of the finite element will follow if we can establish that the vanishing of all dofs for an arbitrary $u \in \Vk{k}{2}(\T{4})$ implies that $u \equiv 0.$

Following our analysis of the 1-forms, it is more convenient to work on the mapped element $K$ whose vertices are given in Eq.~\eqref{eq:Kvertices}. In addition, we follow a similar strategy as before in order to establish unisolvency. First, we show that if $F\in \Vk{k}{2}(K)$ has vanishing curl, it must be that $F = \mathrm{skwGrad}(E)$ for some $E\in (P^k(K))^4$. Next, we show that if the trace degrees of freedom of $F$ vanish, then $\mathrm{curl}(F) = 0$ on the facets of $K$. This helps us establish that a 2-form $F \in \Vk{k}{2}(K)$ with vanishing degrees of freedom has vanishing curl on the entirety of $K$, and furthermore that $F$ itself vanishes.

\begin{lemma} \label{lemma:2form3form} If $F\in \Vk{k}{2}(K)$ has vanishing curl, then $F \equiv \mathrm{skwGrad}(E)$ for some $E \in (P^k(K))^4$.
\end{lemma}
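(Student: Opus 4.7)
The plan is to mimic closely the strategy used in Lemma~\ref{p_grad_lemma} for 1-forms. Since $K$ is contractible, the polynomial de Rham complex is exact on $K$, and the vanishing of $\mathrm{curl}(F)$ guarantees the existence of a polynomial 1-form $E$ with $F = \mathrm{skwGrad}(E)$. Because every entry of $F$ has polynomial degree at most $k$, the homotopy (Poincaré) operator produces an $E \in (P^{k+1}(K))^4$. The task then reduces to showing that $E$ can be taken in $(P^k(K))^4$, by exploiting the structural constraint that the top homogeneous part of $F$ satisfies $F_k x = 0$ (which is a direct consequence of Remark~\ref{remark:pent} applied to the space $\Vk{k}{2}(\T{4})$).

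Concretely, I would decompose $E = E_{\leq k} + E_{k+1}$ with $E_{\leq k} \in (P^k(K))^4$ and $E_{k+1} \in (\tilde{P}^{k+1}(K))^4$. Because $\mathrm{skwGrad}$ preserves homogeneity (mapping homogeneous degree $k+1$ to homogeneous degree $k$), the top homogeneous part of $F$ equals $\mathrm{skwGrad}(E_{k+1})$. The defining constraint $F_k x = 0$ therefore reads $\mathrm{skwGrad}(E_{k+1})\,x = 0$.

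The key computation is to evaluate $[\mathrm{skwGrad}(E_{k+1})\,x]_i$ explicitly. Using $[\mathrm{skwGrad}\,E]_{ij} = \tfrac{1}{2}(\partial_i E_j - \partial_j E_i)$ together with the identity $\sum_j x_j \partial_i E_{k+1,j} = \partial_i(x\cdot E_{k+1}) - E_{k+1,i}$ and Euler's identity $\sum_j x_j \partial_j E_{k+1,i} = (k+1)E_{k+1,i}$, one obtains
\[
\bigl(\mathrm{skwGrad}(E_{k+1})\,x\bigr)_i \;=\; \tfrac{1}{2}\partial_i(x\cdot E_{k+1}) \;-\; \tfrac{k+2}{2}\,E_{k+1,i}.
\]
Setting this to zero yields $E_{k+1} = \mathrm{grad}(\phi)$ where $\phi := (x\cdot E_{k+1})/(k+2) \in \tilde{P}^{k+2}(K)$. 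Invoking exactness of the de Rham complex ($\mathrm{skwGrad}\circ\mathrm{grad} = 0$) then gives $\mathrm{skwGrad}(E_{k+1}) = 0$, so $F = \mathrm{skwGrad}(E_{\leq k})$ with $E_{\leq k} \in (P^k(K))^4$, as desired.

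The main obstacle I anticipate is the preliminary step: justifying that $\mathrm{curl}(F)=0$ implies $F = \mathrm{skwGrad}(E)$ for some polynomial $E$ of degree at most $k+1$. This is the polynomial Poincaré lemma for the 4D de Rham complex on a star-shaped domain; for polynomial data of degree $\leq k$, a standard homotopy operator anchored at the origin produces a polynomial preimage of degree $\leq k+1$. This fact can be cited from the FEEC literature (for instance, \cite{arnold2006finite,arnold2010finite}), or verified directly in coordinates. Once that is in hand, the rest of the argument is an elementary homogeneity/Euler calculation, parallel to the one used in Lemma~\ref{p_grad_lemma}.
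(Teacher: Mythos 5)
Your proposal follows essentially the same route as the paper's proof: produce a polynomial potential $E$ of degree at most $k+1$, split off its homogeneous degree-$(k+1)$ part, use Remark~\ref{remark:pent} to get $\mathrm{skwGrad}(E_{k+1})\,x=0$, and apply Euler's identity. In fact your handling of the last step is the more careful one: you correctly conclude only that $E_{k+1}=\mathrm{grad}(\phi)$ and hence $\mathrm{skwGrad}(E_{k+1})=0$, whereas the paper asserts at the corresponding point that the homogeneous part $C$ itself must vanish --- which does not follow (the gradient of any homogeneous degree-$(k+2)$ polynomial satisfies the same equation) but is also not needed for the lemma.
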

\begin{proof}
Since $\mathrm{curl}(F)=0$, $F=\mathrm{skwGrad}(E)$ for some sufficiently smooth 1-form $E$. In addition, since $F\in \Vk{k}{2}(K)$, in accordance with Eq.~\eqref{rt_pentatope_1} we deduce that 
\begin{align*}
E = A+C, \quad A \in (P^{k}(K))^{4}, \quad  C\in (\widetilde{P}^{k+1}(K))^4.
\end{align*}
It remains for us to show that $C=[c_1,c_2,c_3,c_4]^T=0$ where $c_i \in \widetilde{P}^{k+1}(K).$ 

From Remark~\eqref{remark:pent}, we easily verify that $ \mathrm{skwGrad}(C) \in \left\{B \in \VtoM{(\tilde{P}^{k}(K))^6} \vert B x =0\right\}$ and hence
\begin{align*}
\mathrm{skwGrad}(C)x = \begin{bmatrix}
x \cdot \partial_{1}(C) - x \cdot\mathrm{grad} (c_1)\\[1.0ex]
x \cdot \partial_{2}(C) - x \cdot\mathrm{grad}(c_2) \\[1.0ex]
x \cdot \partial_{3}(C) - x\cdot \mathrm{grad} (c_3)\\[1.0ex]
x \cdot \partial_{4}(C) - x\cdot \mathrm{grad} (c_4)
\end{bmatrix} = 0.
\end{align*}
But since $c_i$ is a homogeneous polynomial, $x \cdot\mathrm{grad}(c_i) =
(k+1)c_i$, and therefore 
\begin{align*}
x \cdot \partial_i (C) = (k+1)c_i, \qquad i=1,2,3,4.
\end{align*}
This is only possible if $c_i=0$ for each $i$. As a result, it immediately follows that $C=0$. 
\end{proof}

\begin{lemma}
 Let $F\in \Vk{k}{2}(K)$ be a polynomial 2-form for which all the degrees of freedom $\Sk{k}{2}(K)$ vanish. Then $\mathrm{Tr}(\mathrm{curl}(F))\equiv 0$ on the facets of $K$.
\end{lemma}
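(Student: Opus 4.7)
The plan is to mimic the approach used for 1-forms in the preceding lemma, relying on integration by parts intrinsic to the tetrahedral facet $\mathcal{F}$ together with the vanishing of trace degrees of freedom. The key observation that makes this possible is that, by naturality of the exterior derivative (commutativity of pullback with $d$), the trace of $\mathrm{curl}(F)$ on $\mathcal{F}$ agrees with the intrinsic three-dimensional divergence of $\mathrm{Tr}(F)$, i.e. $\mathrm{Tr}(\mathrm{curl}(F))=\mathrm{div}_{\mathcal{F}}(\mathrm{Tr}(F))$ at the level of proxies on the facet.

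First, I would exploit the vanishing of the face degrees of freedom from Eq.~\eqref{eq:triangle-2} on each triangular face $f\subset\partial\mathcal{F}$. On such a face, $\mathrm{Tr}(F)\cdot\nu\in P^{k-1}(f)$, so selecting the test function $q=\mathrm{Tr}(F)\cdot\nu$ itself forces $\mathrm{Tr}(F)\cdot\nu\equiv 0$ pointwise on every triangular face of $\partial\mathcal{F}$. Viewed as a 2-form proxy on the three-dimensional tetrahedron $\mathcal{F}$, this says that $\mathrm{Tr}(F)$ has vanishing normal trace on the whole of $\partial\mathcal{F}$.

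Next, integration by parts on the three-dimensional facet $\mathcal{F}$ gives, for any smooth scalar $q$,
\begin{align*}
\int_{\mathcal{F}} q\,\mathrm{Tr}(\mathrm{curl}(F))\,dx
= \int_{\mathcal{F}} q\,\mathrm{div}(\mathrm{Tr}(F))\,dx
= -\int_{\mathcal{F}} \nabla q\cdot\mathrm{Tr}(F)\,dx
+ \int_{\partial\mathcal{F}} q\,\bigl(\mathrm{Tr}(F)\cdot\nu_{\partial\mathcal{F}}\bigr)\,d\sigma.
\end{align*}
The boundary contribution vanishes by the preceding paragraph. I would then set $q=\mathrm{Tr}(\mathrm{curl}(F))$, which is a scalar polynomial of degree at most $k-1$ on $\mathcal{F}$ (since the entries of $F$ live in $P^{k}(\T{4})$). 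Then $\nabla q\in (P^{k-2}(\mathcal{F}))^3$, which is precisely the test space for the tetrahedral-facet degrees of freedom in Eq.~\eqref{eq:tet2}; those are assumed to vanish, so the remaining volume term vanishes as well. This yields $\int_{\mathcal{F}}\bigl(\mathrm{Tr}(\mathrm{curl}(F))\bigr)^2\,dx=0$, and hence $\mathrm{Tr}(\mathrm{curl}(F))\equiv 0$ on $\mathcal{F}$.

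The main obstacle, in my view, is the clean identification of $\mathrm{Tr}(\mathrm{curl}(F))$ with the intrinsic 3D divergence of $\mathrm{Tr}(F)$ at the level of the proxy fields used in this paper; once that naturality statement is pinned down (it falls out of the commutative diagram of Section~\ref{sobolev_sec} combined with the explicit trace formulas~\eqref{trace_formula_two}--\eqref{trace_formula_three}), everything else is degree bookkeeping. The bookkeeping is tight---the gradient of a degree-$(k-1)$ scalar on $\mathcal{F}$ lands exactly in the $(P^{k-2}(\mathcal{F}))^3$ test space of Eq.~\eqref{eq:tet2}---so the argument has no slack and is a direct analog of the 1-form case.
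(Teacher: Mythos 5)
Your proposal is correct and follows essentially the same route as the paper: identify $\mathrm{Tr}(\mathrm{curl}(F))$ with the intrinsic divergence of the 2-form proxy $\mathrm{Tr}(F)$ on the tetrahedral facet, integrate by parts on $\mathcal{F}$, kill the resulting terms using the vanishing trace degrees of freedom, and conclude by testing against $\mathrm{Tr}(\mathrm{curl}(F))$ itself. You are in fact somewhat more explicit than the paper, which silently drops the boundary integral over $\partial\mathcal{F}$; your use of the triangular-face dofs of Eq.~\eqref{eq:triangle-2} to show $\mathrm{Tr}(F)\cdot\nu\equiv 0$ there makes that step airtight.
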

\begin{proof}
	Let $\mathcal{F}$ be a tetrahedral facet of $K$. Since $\text{curl} (F)$ is a 3-form, its trace on $\mathcal{F}$ is a 3-form. The divergence theorem on $\mathcal{F}$, and the vanishing of traces of $F$ gives
    \begin{align}
	\int_\mathcal{F}  \mathrm{Tr}(\mathrm{curl}(F)) q \, dx = \int_{\mathcal{F}} \nabla\cdot(\mathrm{Tr}(F)) q \, dx = - \int_{\mathcal{F}} (\mathrm{Tr}(F))       \cdot \nabla q \, dx =0, \label{two_form_facet}
    \end{align}
    for any sufficiently smooth $q$, and in particular for $q \in P^{k-1}(K)$.
Therefore, upon setting $q = \nabla\cdot(\mathrm{Tr}(F))$ in Eq.~\eqref{two_form_facet}, we find that $\nabla\cdot(\mathrm{Tr}(F)) = \mathrm{Tr}(\text{curl}(F))=0$ on each $\mathcal{F}$, and on the entire boundary of $K$. 
\end{proof}

\begin{theorem}
	Let $F \in \Vk{k}{2}(K)$ be a polynomial 2-form for which all the degrees of freedom $\Sk{k}{2}(K)$ vanish. Then $F \equiv 0.$
\end{theorem}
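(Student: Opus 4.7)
The strategy mirrors the 1-form unisolvency proof in two stages: first establish $\mathrm{curl}(F)\equiv 0$ on $K$, and then combine this with Lemma~\ref{lemma:2form3form} and a second positivity argument to deduce $F\equiv 0$. In each stage the dof conditions are fed into the appropriate IBP identity, facet-trace information is used to factor the unknown through barycentric coordinates, and a specific test function produces a manifestly non-negative integrand whose vanishing forces the factored polynomial to be zero.

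For the first stage, identity \eqref{ibp_two_A} together with the vanishing face and facet traces of $F$ yields
\begin{equation*}
    \int_K (\mathrm{curl}\,F)\cdot E\, dx = \int_K (\mathrm{Curl}\, E) : F\, dx
\end{equation*}
for any smooth 1-form $E$. Restricting to $E\in(P^{k-2}(K))^4$ places $\mathrm{Curl}(E)\in\VtoM{(P^{k-3}(K))^6}$, so the vanishing of the volume dofs \eqref{eq:pent2} kills the right-hand side, giving $\int_K (\mathrm{curl}\,F)\cdot E\, dx = 0$ for all such $E$. By the preceding lemma and the normal-trace formula \eqref{trace_formula_three}, on each coordinate facet $x_i=0$ the $i$-th component of $\mathrm{curl}(F)$ vanishes, forcing $[\mathrm{curl}(F)]_i = x_i s_i$ for some $s_i\in P^{k-2}(K)$; the diagonal facet $\lambda_5=0$ supplies an additional compatibility relation that is not needed for the next step. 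Choosing $E=(s_1,s_2,s_3,s_4)^T\in (P^{k-2}(K))^4$ produces
\begin{equation*}
    0 = \int_K (\mathrm{curl}\,F)\cdot E\, dx = \int_K \sum_{i=1}^4 x_i s_i^2\, dx,
\end{equation*}
and since each $x_i$ is non-negative on $K$ and strictly positive on a subset of full measure, every $s_i$ must vanish identically, so $\mathrm{curl}(F)\equiv 0$.

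For the second stage, Lemma~\ref{lemma:2form3form} supplies $E^*\in (P^k(K))^4$ with $F=\mathrm{skwGrad}(E^*)$. Because $\mathrm{skwGrad}\circ\mathrm{grad}=0$, we may modify $E^*$ by a gradient without altering $F$; using the vanishing tangential traces of $F$, one arranges the representative $E^*$ so that its relevant tangential components vanish on each facet and then factors $E^*$ by barycentric coordinates accordingly. Finally, rewriting the volume dofs \eqref{eq:pent2} of $F=\mathrm{skwGrad}(E^*)$ through identity \eqref{ibp_one_C} and choosing a test function built componentwise from $E^*$ (in the same spirit as the closing $q=[x_i\partial_i \hat r+\hat r]_i$ choice in the 1-form proof) delivers a weighted sum-of-squares integrand on $K$, forcing the remaining polynomial factor in $E^*$ to be zero and hence $F\equiv 0$. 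The principal obstacle I anticipate is precisely this final step: selecting the right gauge-fixed representative of $E^*$, tracking how the vanishing tangential traces of $F$ descend through the $\mathrm{skwGrad}$ map to vanishing traces of $E^*$ modulo gradients, and then constructing a test function in $\VtoM{(P^{k-3}(K))^6}$ whose pairing with $F$ is manifestly positive. The first stage is largely formulaic once Lemma~\ref{lemma:2form3form} and the preceding facet-trace lemma are in hand, whereas closing the second stage requires careful bookkeeping of the boundary behavior under the gauge freedom.
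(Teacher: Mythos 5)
Your first stage is correct and coincides with the paper's argument: the trace identity \eqref{ibp_two_A} together with the vanishing trace dofs reduces to $\int_K(\mathrm{curl}\,F)\cdot E\,dx=\int_K(\mathrm{Curl}\,E):F\,dx$; restricting to $E\in(P^{k-2}(K))^4$ puts $\mathrm{Curl}(E)\in\VtoM{(P^{k-3}(K))^6}$ so the vanishing volume dofs \eqref{eq:pent2} kill the right-hand side; the facet lemma gives $[\mathrm{curl}(F)]_i=x_i\psi_i$ with $\psi_i\in P^{k-2}(K)$; and the choice $E=[\psi_1,\psi_2,\psi_3,\psi_4]^T$ forces $\mathrm{curl}(F)=0$.

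The second stage, however, is only a plan, and the step you flag as ``the principal obstacle'' is exactly the content of the remaining proof, so as written the argument does not close. The paper finishes as follows, and neither the gauge-fixing machinery nor the detour through \eqref{ibp_one_C} that you propose is needed. With $F=\mathrm{skwGrad}(\mathcal{E})$, $\mathcal{E}\in(P^k(K))^4$ from Lemma~\ref{lemma:2form3form}, the vanishing traces of $F$ let one take $\mathcal{E}$ with vanishing traces; the $1$-form trace formula \eqref{trace_formula_one} on the facet $x_4=0$ then forces $\mathcal{E}_1=\mathcal{E}_2=\mathcal{E}_3=0$ there, and repeating over the coordinate facets yields the explicit factorization $\mathcal{E}_i=\bigl(\prod_{j\neq i}x_j\bigr)g_i$ with $g_i\in P^{k-3}(K)$. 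A direct computation shows each entry of $\mathrm{skwGrad}(\mathcal{E})$ factors as $x_\ell x_m\,q_{ij}$ where $q_{ij}:=\partial_i(x_ig_j)-\partial_j(x_jg_i)\in P^{k-3}(K)$ and $\{i,j,\ell,m\}=\{1,2,3,4\}$. The test function is then simply $q=\VtoM{[q_{12},q_{13},q_{14},q_{23},q_{24},q_{34}]^T}\in\VtoM{(P^{k-3}(K))^6}$, inserted directly into \eqref{eq:pent2}, which gives $0=\int_K\bigl(x_3x_4q_{12}^2+x_2x_4q_{13}^2+\cdots+x_1x_2q_{34}^2\bigr)dx$ and hence $q_{ij}=0$ and $F=\mathrm{skwGrad}(\mathcal{E})=0$. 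Without the facet-by-facet factorization of $\mathcal{E}$ and the explicit form of $\mathrm{skwGrad}(\mathcal{E})$, your claimed ``manifestly non-negative integrand'' is never actually produced, so the proof is incomplete at its decisive point; note also that the one genuinely delicate assertion --- that $\mathcal{E}$ may be chosen with vanishing traces on all five facets simultaneously --- is something your sketch worries about but does not resolve either.
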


\begin{proof} We first observe from Eq.~\eqref{ibp_two_A} that
\begin{align*}
    \nonumber \left(\text{tr}^{(2)} F \right)(E) &= \int_{\partial K} \left( n \times F \right) \cdot E \, ds \\[1.0ex]
     &= \int_{K} \left(\text{curl} \, F \right) \cdot E \, dx -\int_{K} \left(\text{Curl} \, E \right) : F \, dx, 
\end{align*}
where $E \in H\left( \text{Curl}, K, \mathbb{R}^{4} \right)$. Since all the trace degrees of freedom of $F\in \Vk{k}{2}(K)$ vanish, $F$ has zero trace, and therefore
\begin{align*}
\int_{K} \left(\text{curl} \, F \right) \cdot E \, dx = \int_{K} \left(\text{Curl} \, E \right) : F \, dx.
\end{align*}
Now, if we pick $E \in (P^{k-2}(K))^4$ then $E \in  H\left( \text{Curl}, K, \mathbb{R}^{4} \right)$, and $\mathrm{Curl}(E) \in \VtoM{(P^{k-3}(K))^6}.$ Since the volumetric dofs vanish for $F$, we set $q = \mathrm{Curl}(E)$ in Eq.~\eqref{eq:pent2}, and we obtain
\begin{equation}\label{eq:intermediate2-pent}
\int_{K} \left(\text{curl} \, F \right) \cdot E \, dx = 0, \qquad \forall E \in (P^{k-2}(K))^4.
\end{equation}
From the previous lemma, the trace of $\mathrm{curl}(F)$ vanishes on the facets and so it is a 3-form bubble in $(P^{k-1}(K))^4$, and we can write
\begin{align*}
 \mathrm{curl}(F) = \begin{bmatrix}
 x_1 \psi_1\\[1.0ex]
 x_2\psi_2\\[1.0ex]
 x_3\psi_3\\[1.0ex]
 x_4\psi_4
 \end{bmatrix}, \qquad \psi_i \in P^{k-2}(K), \qquad i = 1,2 ,3, 4.
 \end{align*}
Upon choosing $E=[\psi_1,\psi_2,\psi_3,\psi_4]^T$ in Eq.~\eqref{eq:intermediate2-pent}, we obtain
\begin{align*}
0=\int_{K} \left(\text{curl} \, F \right) \cdot E \, dx = \int_K \sum_{i=1}^{4} x_i \psi_i^2 \, dx.
\end{align*}
But $x_i\geq0$ in $K$ and so we are guaranteed that $\psi_i=0$ for  $i=1,2,3,4$. This shows that $\mathrm{curl}(F)=0$ in $K$. 

Next, in accordance with Lemma~\ref{lemma:2form3form}, we can immediately deduce that $F = \mathrm{skwGrad}(\mathcal{E})$ for some $\mathcal{E} \in (P^k(K))^4$. In turn, the vanishing of the traces of $F$ allows us to pick $\mathcal{E}$ to also have vanishing traces. Recalling Eq.~\eqref{trace_formula_one}, we can obtain the following trace formula on the facet $x_4=0$ 
\begin{align*}
\mathrm{tr}(\mathcal{E}) &=\frac{1}{2} \begin{bmatrix}
            0 & 0 & 0 & \mathcal{E}_1(x_1,x_2,x_3,0) \\[1.0ex]
            0 & 0 & 0 & \mathcal{E}_2(x_1,x_2,x_3,0) \\[1.0ex]
            0 & 0 & 0 & \mathcal{E}_3(x_1,x_2,x_3,0) \\[1.0ex]
            -\mathcal{E}_1(x_1,x_2,x_3,0) & -\mathcal{E}_2(x_1,x_2,x_3,0) & -\mathcal{E}_3(x_1,x_2,x_3,0) & 0
        \end{bmatrix}=0, \\[1.0ex] &\Rightarrow \mathcal{E}_1(x_1,x_2,x_3,0)=\mathcal{E}_2(x_1,x_2,x_3,0)=\mathcal{E}_3 (x_1,x_2,x_3,0)=0. 
\end{align*}
Similar considerations apply for the other facets, allowing us to obtain the following expression for $\mathcal{E}$
\begin{align*}
\mathcal{E}=\begin{bmatrix}x_2x_3x_4g_1\\[1.0ex]
x_1x_3x_4g_2\\[1.0ex]
x_1x_2x_4g_3\\[1.0ex]
x_1x_2x_3g_4\end{bmatrix}, \quad g_i \in P^{k-3}(K), \qquad i = 1,2,3, 4.
\end{align*}
Furthermore
\begin{align*}
    \mathrm{skwGrad}(\mathcal{E}) = \VtoM{ 
        \begin{bmatrix}
           x_3 x_4 \left(\partial_{1}(x_1 g_2) - \partial_{2}(x_2 g_1)\right) \\[1.0ex]
            x_2 x_4\left(\partial_{1}(x_1 g_3) - \partial_{3}(x_3 g_1)\right) \\[1.0ex]
            x_2 x_3 \left(\partial_{1}(x_1 g_4) - \partial_{4}(x_4 g_1)\right) \\[1.0ex]
            x_1 x_4\left(\partial_{2}(x_2 g_3) - \partial_{3}(x_3 g_2)\right) \\[1.0ex]
            x_1 x_3\left(\partial_{2}(x_2 g_4) - \partial_{4}(x_4 g_2)\right) \\[1.0ex]
            x_1 x_2\left(\partial_{3}(x_3 g_4) - \partial_{4}(x_4 g_3)\right)
        \end{bmatrix}
    }.
\end{align*}
We can now pick $q$ in Eq.~\eqref{eq:pent2} as follows
\begin{align*}
q= \VtoM{\begin{bmatrix}
	q_{12}\\[1.0ex]
	q_{13}\\[1.0ex]
	q_{14}\\[1.0ex]
	q_{23}\\[1.0ex]
	q_{24}\\[1.0ex]
	q_{34}
	\end{bmatrix}}=\VtoM{ 
        \begin{bmatrix}
            \partial_{1}(x_1 g_2) - \partial_{2}(x_2 g_1) \\[1.0ex]
            \partial_{1}(x_1 g_3) - \partial_{3}(x_3 g_1) \\[1.0ex]
            \partial_{1}(x_1 g_4) - \partial_{4}(x_4 g_1) \\[1.0ex]
            \partial_{2}(x_2 g_3) - \partial_{3}(x_3 g_2) \\[1.0ex]
            \partial_{2}(x_2 g_4) - \partial_{4}(x_4 g_2) \\[1.0ex]
            \partial_{3}(x_3 g_4) - \partial_{4}(x_4 g_3)
        \end{bmatrix}
    },
\end{align*}
in order to obtain
\begin{align*}
    &0=\int_{K}F:q \, dx = \int_K\mathrm{skwGrad}(\mathcal{E}):q \, dx \\[1.0ex]
    &= \int_K \left(x_3x_4q_{12}^2+x_2x_4q_{13}^2+x_2x_3q_{14}^2+x_1x_4q_{23}^2+x_1x_3q_{24}^2+x_1x_2q_{34}^2 \right) dx.
\end{align*}
But then each $q_{ij}=0$, and in turn, it is easy to check that each $g_i$ must vanish. Finally, it follows that $\mathcal{E} =0$ and $F = \mathrm{skwGrad}(\mathcal{E})$ must vanish.
\end{proof}

\subsubsection{Dofs for 3-forms on $\T{4}$ }
The polynomial 3-forms on $\T{4}$ are associated with 4-vectors. The corresponding degrees of freedom have the following dimension
\begin{align*}
\text{dim}\left(\Vk{k}{3}(\T{4}) \right)& = \text{dim}( (P^{k-1}(\T{4}))^4) + \text{dim} ( \tilde{P}^{k-1}(\T{4})) \\
&= 4 {k+3 \choose 4} + {k+2 \choose 3 } = \frac{1}{6}k(k+1)(k+2)(k+4).
\end{align*}
Only facet traces are defined for 3-forms, as given by Eq.~\eqref{eq:tet3}. Therefore
\begin{align*}
\text{dim}\left(\Sigma_{trace}^{k,3}(\T{4})\right) &= 5 \, \text{dim} \left(P^{k-1}(\T{3}) \right) \\
&= 5 {k+2 \choose 3} = \frac{5}{6}k(k+1)(k+2).
\end{align*}
We define the volume degrees of freedom for the 3-form proxy $G$ as follows
\begin{equation} \label{eq:pent3}
	\Sk{k}{3}_{vol}(\T{4}) := \left\{ \int_{\T{4}} G\cdot q, \qquad q \in (P^{k-2}(\T{4}))^4\right\}.
\end{equation}
The dimension of this space is
\begin{align*}
    \text{dim}\left( \Sk{k}{3}_{vol}(\T{4}) \right) = 4{k+2 \choose 4} = \frac{1}{6}(k-1)k(k+1)(k+2).
\end{align*}
Then, we define 
\begin{align*}
\Sk{k}{3}(\T{4}):= \Sigma_{trace}^{k,3}(\T{4}) \cup \Sk{k}{3}_{vol}(\T{4}),
\end{align*}
and note that
\begin{align*}
\text{dim} \left(\Vk{k}{3}(\T{4}) \right) = \text{dim} \left(\Sk{k}{3}(\T{4}) \right) = \text{dim}\left(\Sigma_{trace}^{k,3}(\T{4}) \right) + \text{dim}\left(\Sk{k}{3}_{vol}(\T{4})\right).
\end{align*}
Therefore, unisolvency will be guaranteed by establishing the following result.

\begin{lemma}
	   Consider $G \in \Vk{k}{3}(K)$ a polynomial 3-form for which all the degrees of freedom $\Sk{k}{3}(K)$ vanish. Then $G \equiv 0.$
\end{lemma}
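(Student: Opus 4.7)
The plan is to mirror the strategy used in the unisolvency theorems for 1-forms and 2-forms, again working on the mapped element $K$ with vertices as in Eq.~\eqref{eq:Kvertices}. First I would establish that $\text{div}(G) \equiv 0$ on $K$. Applying the integration-by-parts identity Eq.~\eqref{ibp_three} with a test function $u \in P^{k-1}(K)$ gives
\begin{align*}
\int_{\partial K}(G \cdot n) u \, ds = \int_K \text{div}(G)\, u \, dx + \int_K G \cdot \text{grad}(u) \, dx.
\end{align*}
Vanishing of the facet trace dofs forces $G \cdot n = 0$ on $\partial K$, so the boundary term drops out. Since $\text{grad}(u) \in (P^{k-2}(K))^4$, vanishing of the volume dofs (Eq.~\eqref{eq:pent3}) kills the second term. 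We are left with $\int_K \text{div}(G)\, u \, dx = 0$ for every $u \in P^{k-1}(K)$. Because $\text{div}(G) \in P^{k-1}(K)$ (as $G$ has components of degree at most $k$), taking $u = \text{div}(G)$ yields $\text{div}(G) \equiv 0$ in $K$.

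Next I would extract the structural form of $G$. Decompose $G = p + r\, x$ with $p \in (P^{k-1}(K))^4$ and $r \in \tilde{P}^{k-1}(K)$. The condition $G \cdot n = 0$ on the coordinate facet $x_i = 0$ forces $p_i(x)\vert_{x_i=0} = 0$, so $p_i = x_i \tilde{p}_i$ for some $\tilde{p}_i \in P^{k-2}(K)$. Thus
\begin{align*}
G_i = x_i(\tilde{p}_i + r), \qquad i = 1, 2, 3, 4.
\end{align*}
Expanding $\text{div}(G)$ and using the Euler identity $\sum_{i} x_i \partial_i r = (k-1) r$ for the homogeneous polynomial $r$, we obtain
\begin{align*}
0 \;=\; \text{div}(G) \;=\; \sum_{i=1}^4 \tilde{p}_i + \sum_{i=1}^4 x_i \partial_i \tilde{p}_i + (k+3)\, r.
\end{align*}
The first two sums lie in $P^{k-2}(K)$, while $(k+3)\,r$ is homogeneous of degree $k-1$. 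Matching degree-$(k-1)$ homogeneous components forces $r \equiv 0$.

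Finally, with $r = 0$ we have $G_i = x_i \tilde{p}_i$ with $\tilde{p}_i \in P^{k-2}(K)$. Taking $q = [\tilde{p}_1, \tilde{p}_2, \tilde{p}_3, \tilde{p}_4]^T \in (P^{k-2}(K))^4$ in the volume dof (Eq.~\eqref{eq:pent3}) yields
\begin{align*}
0 \;=\; \int_K G \cdot q \, dx \;=\; \int_K \sum_{i=1}^4 x_i\, \tilde{p}_i^2 \, dx.
\end{align*}
Each $x_i$ is non-negative on $K$ and strictly positive in the interior, so each integrand is non-negative and hence $\tilde{p}_i \equiv 0$ for every $i$, whence $G \equiv 0$. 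The main obstacle I expect is the degree-decomposition step, where one must cleanly isolate $r$ as the unique contribution to the top homogeneous piece of $\text{div}(G)$; once this is recognised, the remainder is a direct non-negativity argument paralleling the closing steps of the 1-form and 2-form unisolvency proofs. Note that the trace condition on the slanted facet is not invoked explicitly in Steps 2--3 but enters implicitly through Step 1, which used $G \cdot n = 0$ on \emph{all} five facets.
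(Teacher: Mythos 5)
Your proposal is correct and follows essentially the same route as the paper's proof: integration by parts plus the vanishing dofs to get $\mathrm{div}(G)=0$, a homogeneity/degree argument on the decomposition $G=p+\hat r x$ to force $\hat r=0$, the facet traces to factor $G_i=x_i\phi_i$, and the non-negativity of the coordinates on $K$ with $q=[\phi_1,\phi_2,\phi_3,\phi_4]^T$ to conclude. The only differences are cosmetic — you invoke the coordinate-facet traces before killing $\hat r$ rather than after, and your Euler-identity constant $(k+3)$ is in fact the correct one (the paper's $(k+4)$ is a harmless slip that does not affect the argument).
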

\begin{proof}
    This proof closely follows the strategy outlined in~\cite{monkbook}, with reference element $K$ given by Eq.~\eqref{eq:Kvertices}.

    We begin by introducing $v \in P^{k-1}(K)$. In accordance with integration by parts (Eq.~\eqref{ibp_three}), the vanishing of trace and volume degrees of freedom for $G$, and Eq.~\eqref{eq:pent3}, one obtains
    \begin{align*}
	   \int_{K} (\text{di}v \, G ) v = -\int_{K} G \cdot (\text{grad} \, v) =0.
    \end{align*}
    By choosing $v = \text{div}(G)$, we see that $\text{div} (G) =0$ in $K$.
	
    Now, $G = p + \hat{r}x$ for $p \in (P^{k-1}(K))^4$ and $\hat{r}\in \tilde{P}^{k-1}(K)$ by definition (Eq.~\eqref{rt_pentatope_3}). In addition, it is easy to check that $\text{div}(\hat{r}x) = (k+4)\hat{r}$. Therefore, 
    \begin{align*}
        \text{div}(G) = \text{div} (p) + (k+4)\hat{r} \Rightarrow \hat{r} = -\frac{1}{k+4} \text{div}(p) \in P^{k-2}(K).
    \end{align*}
    But this is not possible unless the degree $k-1$ polynomial $\hat{r}=0$. With this in mind, we observe the following
    \begin{align*}
        G=p \in (P^{k-1}(K))^4  \Rightarrow G= \begin{bmatrix} x_1 \phi_1 \\[1.0ex] x_2 \phi_2 \\[1.0ex] x_3\phi_3 \\[1.0ex] x_4 \phi_4 \end{bmatrix},\quad \phi_i \in P^{k-2}(K).
    \end{align*}
    This reformulation is possible because $G$ is a 3-form bubble with vanishing traces given by Eq.~\eqref{trace_formula_three}. If $k>1$, we can pick $q=[\phi_1, \phi_2, \phi_3, \phi_4]^T$ in Eq.~\eqref{eq:pent3}, from which it will follow that $\phi_i=0$, (and hence $G\equiv 0)$. If $k=1$, then trivially $\phi_i=0.$
\end{proof}

\subsubsection{Dofs for 4-forms on $\T{4}$}
The polynomial 4-forms on $\T{4}$ are associated with scalars. Traces for 4-forms are not well-defined. Instead, we specify interior degrees of freedom for the 4-form proxy $q$ as
\begin{equation}\label{eq:pent4} 
	\Sk{k}{4}_{vol}(\T{4}):=\left\{ \int_{\T{4}} qp, \qquad p\in P^{k-1}(\T{4})\right\}. 
\end{equation}
In a natural fashion, we have that
\begin{align*}
    \text{dim}\left(\Vk{k}{4}(\T{4}) \right) &= \text{dim} \left(\Sk{k}{4}(\T{4}) \right) =\text{dim} \left(\Sk{k}{4}_{vol}(\T{4}) \right) \\
    &= {k+3 \choose 4} = \frac{1}{24}k(k+1)(k+2)(k+3).
\end{align*}
It then remains to prove unisolvency.
\begin{lemma}
    Consider $q \in \Vk{k}{4}(\T{4})$ a polynomial 4-form for which all the degrees of freedom $\Sk{k}{4}(\T{4})$ vanish. Then $q \equiv 0.$
\end{lemma}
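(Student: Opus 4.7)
The plan is to exploit the fact that the test space for the volume degrees of freedom is exactly the same space to which $q$ itself belongs. Since $\Vk{k}{4}(\T{4}) = P^{k-1}(\T{4})$ and the functionals in $\Sk{k}{4}_{vol}(\T{4})$ range over all test functions $p \in P^{k-1}(\T{4})$, the obvious move is to set $p = q$. Dimension counting is already in place (both spaces have dimension $\binom{k+3}{4}$), so only the injectivity argument is needed.

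First I would assume that all the degrees of freedom vanish, i.e.
\begin{equation*}
\int_{\T{4}} q\, p \, dx = 0 \qquad \forall p \in P^{k-1}(\T{4}).
\end{equation*}
Since $q$ itself lies in $P^{k-1}(\T{4})$, this is an admissible choice of test function $p$. Substituting $p = q$ yields
\begin{equation*}
\int_{\T{4}} q^2 \, dx = 0.
\end{equation*}
The integrand is non-negative and continuous on $\T{4}$, so $q \equiv 0$ pointwise on $\T{4}$. This completes the unisolvency argument.

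There is essentially no obstacle here: this is the simplest of the five unisolvency proofs because 4-forms admit no trace degrees of freedom, so there is no bubble-function decomposition or integration-by-parts machinery to invoke, in contrast to the 1-form, 2-form, and 3-form cases handled earlier in this section. The proof is a one-line $L^2$ pairing argument, and the matching of dimensions $\dim(\Vk{k}{4}(\T{4})) = \dim(\Sk{k}{4}_{vol}(\T{4}))$ already noted in the text guarantees that injectivity implies the dofs form a basis of the dual space.
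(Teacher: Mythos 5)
Your proof is correct and follows exactly the paper's argument: since $\Vk{k}{4}(\T{4}) = P^{k-1}(\T{4})$ coincides with the test space in Eq.~\eqref{eq:pent4}, setting $p = q$ forces $\int_{\T{4}} q^2\,dx = 0$ and hence $q \equiv 0$. No differences to note.
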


\begin{proof}
Suppose that $q \in \Vk{k}{4}(\T{4})$ has vanishing degrees of freedom as given by Eq.~\eqref{eq:pent4}; then setting $p=q$ shows that $q \equiv 0$.
\end{proof}

\section{Finite Elements on a Reference Tetrahedral Prism}

In this section, we introduce the finite element approximation spaces for $s$-forms on the tetrahedral prism $\W{4}$. These finite element spaces are developed by taking tensor products of spaces on tetrahedra $\T{3}$ with spaces on line segments $\T{1}$. In accordance with the work of~\cite{arnold2015finite} and \cite{mcrae2016automated}, we can construct tensor product elements using the spaces from two different sequences
\begin{align*}
	&U_{0} \arrow{r}{d^{\left(0\right)}} U_{1} \arrow{r}{d^{\left(1\right)}} \cdots \arrow{r}{d^{\left(n-1\right)}} U_{n}, \\[1.0ex]
	&W_{0} \arrow{r}{d^{\left(0\right)}} W_{1} \arrow{r}{d^{\left(1\right)}} \cdots \arrow{r}{d^{\left(m-1\right)}} W_{m},
\end{align*}
which are defined on domains $\Omega \in \mathbb{R}^{n}$ and $\underline{\Omega} \in \mathbb{R}^{m}$, respectively.
The associated tensor product sequence can be written as follows
\begin{align*}
	\left(U \times W\right)_{0} \arrow{r}{d^{\left(0\right)}}  \left(U \times W\right)_{1} \arrow{r}{d^{\left(1\right)}} \cdots \arrow{r}{d^{\left(n+m-1\right)}}  \left(U \times W\right)_{n+m},
\end{align*}
which is defined on the domain $\underline{\underline{\Omega}} \in \mathbb{R}^{n+m}$. Each entry in the tensor product sequence above can be written as 
\begin{align*}
	\left(U\times W\right)_k = \bigoplus_{i+j=k} \left(U_i \times W_j\right),
\end{align*}
where $k = 0, \ldots, n+m$. 

On the tetrahedral prism, we have that $n=3$ and $m=1$. As a result, we recover the following sequence
\begin{align*}
	\left(U \times W\right)_{0} \arrow{r}{d^{\left(0\right)}}  \left(U \times W\right)_{1} \arrow{r}{d^{\left(1\right)}} \left(U \times W\right)_{2} \arrow{r}{d^{\left(2 \right)}}  \left(U \times W\right)_{3} \arrow{r}{d^{\left(3 \right)}} \left(U \times W\right)_{4},
\end{align*}
where
\begin{subequations}
\begin{align}
	\left(U \times W\right)_{0} &= U_0 \times W_0, \\[1.0ex]
	\left(U \times W\right)_{1} &= \left(U_1 \times W_0 \right) \oplus \left(U_0 \times W_1 \right), \\[1.0ex]
	\left(U \times W\right)_{2} &= \left(U_1 \times W_1\right) \oplus \left(U_2 \times W_0\right), \\[1.0ex]
	\left(U \times W\right)_{3} &= \left(U_3 \times W_0 \right) \oplus \left(U_2 \times W_1\right), \\[1.0ex]
	\left(U \times W\right)_{4} &= U_3 \times W_1. 
\end{align}\label{cross_def}
\end{subequations}
The precise construction of the resulting tensor-product spaces is given in~\ref{tet_exp}. This construction is expressed in terms of differential forms. In what follows, we provide the equivalent spaces in terms of vector and matrix notation
\begin{align*}
	\Vk{k}{0}(\W{4}) := & P^{k}\left(\T{1}\right) \times P^{k}\left(\T{3}\right), \\[1.0ex]
	\Vk{k}{1}(\W{4}) := &P^{k} \left(\T{1}\right) \times \Bigg( \Big\{  p \; \Big| \; p \in \left[\tilde{P}^{k}\left(\T{3}\right), \tilde{P}^{k}\left(\T{3}\right), \tilde{P}^{k}\left(\T{3}\right), 0 \right]^{T}, p\cdot x = 0 \Big\} \\[1.0ex]
	&\qquad \qquad \oplus \left[P^{k-1}\left(\T{3}\right), P^{k-1}\left(\T{3}\right), P^{k-1}\left(\T{3}\right), 0 \right]^{T} \Bigg) \oplus P^{k}\left(\T{3}\right) \times \left[0, 0, 0, P^{k-1} \left(\T{1}\right) \right]^{T},
\end{align*}
\begin{align*}
	\Vk{k}{2}(\W{4}) := & P^{k-1} \left(\T{1}\right) \times \Bigg( \Bigg\{ \begin{bmatrix}
		0 & 0 & 0 & p_1 \\
		0 & 0 & 0 & p_2 \\
		0 & 0 & 0 & p_3 \\
		\ast & \ast & \ast & 0
	\end{bmatrix}  \Big| \; p \in \left[\tilde{P}^{k}\left(\T{3}\right), \tilde{P}^{k}\left(\T{3}\right), \tilde{P}^{k}\left(\T{3}\right), 0 \right]^{T}, p\cdot x = 0 \Bigg\} \\[1.0ex]
	&\qquad \qquad \quad \oplus \begin{bmatrix}
		0 & 0 & 0 & P^{k-1} \left(\T{3}\right) \\
		0 & 0 & 0 & P^{k-1} \left(\T{3}\right) \\
		0 & 0 & 0 & P^{k-1} \left(\T{3}\right) \\
		\ast & \ast & \ast & 0
	\end{bmatrix} \Bigg) \\[1.0ex]
	&\oplus P^{k} \left(\T{1}\right) \times \Bigg( \begin{bmatrix}
		0 & P^{k-1} \left(\T{3}\right) & P^{k-1} \left(\T{3}\right) & 0 \\
		\ast & 0 & P^{k-1} \left(\T{3}\right) & 0 \\
		\ast & \ast & 0 & 0 \\
		0 & 0 & 0 & 0
	\end{bmatrix} \oplus
	\tilde{P}^{k-1} \left(\T{3}\right)\begin{bmatrix}
		0 & x_3 & -x_2 & 0 \\
		\ast & 0 & x_1 & 0 \\
		\ast & \ast & 0 & 0 \\
		0 & 0 & 0 & 0
	\end{bmatrix} \Bigg),
\end{align*}
\begin{align*}
	\Vk{k}{3}(\W{4}) := & P^{k-1} \left(\T{3}\right) \times \left[0, 0, 0, P^{k}\left(\T{1}\right)  \right]^{T} \\
	&\oplus P^{k-1}\left(\T{1}\right) \times \Bigg( \left[P^{k-1}\left(\T{3}\right), P^{k-1}\left(\T{3}\right), P^{k-1}\left(\T{3}\right), 0 \right]^{T} \oplus \tilde{P}^{k-1} \left(\T{3}\right) \left[x_1, x_2, x_3, 0 \right]^{T} \Bigg), \\[1.0ex]
	\Vk{k}{4}(\W{4}) := & P^{k-1}\left(\T{1}\right) \times P^{k-1}\left(\T{3}\right).
\end{align*}
We can now construct the following exact sequence
\begin{align*}
	\begin{matrix}
		\Vk{k}{0}(\W{4}) & \arrow{r}{d^{\left(0\right)}} & \Vk{k}{1}(\W{4}) & \arrow{r}{d^{\left(1\right)}} & \Vk{k}{2}(\W{4}) & \arrow{r}{d^{\left(2\right)}} & \Vk{k}{3}(\W{4})  & \arrow{r}{d^{\left(3\right)}} & \Vk{k}{4}(\W{4}).
	\end{matrix}
\end{align*}

\subsection{N{\'e}d{\'e}lec-Raviart-Thomas Sequence}
We can construct a convenient sequence using the well-known N{\'e}d{\'e}lec finite elements of the first kind and Raviart-Thomas finite elements
\begin{subequations}
\begin{align}
	\Vk{k}{0}(\W{4}) = & CG^{k}\left(\T{1}\right) \times CG
	^{k} \left(\T{3}\right), \label{rt_prism_0} \\[1.0ex]
	\Vk{k}{1}(\W{4}) = & \left(CG^{k}\left(\T{1}\right) \times \begin{bmatrix}
		N^{k-1}\left(\T{3}\right)\\ 0
	\end{bmatrix} \right) \oplus \left( \left[0, 0, 0, DG^{k-1}\left(\T{1}\right) \right]^{T} \times CG^{k}\left(\T{3}\right) \right), \label{rt_prism_1} \\[1.0ex]
	\nonumber \Vk{k}{2}(\W{4}) = & \left( DG^{k-1}\left(\T{1} \right) \times \begin{bmatrix} \begin{bmatrix}
			0 & 0 & 0 \\
			0 & 0 & 0 \\
			0 & 0 & 0
		\end{bmatrix}
		& N^{k-1}\left(\T{3} \right) \\
		\ast & 0
	\end{bmatrix} \right) \\[1.0ex]
	&\oplus \left( CG^{k} \left(\T{1}\right) \times \begin{bmatrix}
		0 & RT^{k-1}_{3} (\T{3})  & -RT^{k-1}_{2} (\T{3}) & 0 \\
		\ast & 0 & RT^{k-1}_{1} (\T{3}) & 0 \\
		\ast & \ast & 0 & 0 \\
		0 & 0 & 0 & 0
	\end{bmatrix} \right), \label{rt_prism_2} \\[1.0ex]
	\Vk{k}{3}(\W{4}) = & \left(\left[0,0,0, CG^{k}\left(\T{1}\right) \right]^{T} \times DG^{k-1} \left(\T{3}\right) \right) \oplus \left( DG^{k-1}\left(\T{1}\right) \times \begin{bmatrix}
		RT^{k-1} \left(\T{3}\right) \\ 0
	\end{bmatrix} \right), \label{rt_prism_3} \\[1.0ex]
	\Vk{k}{4}(\W{4}) = & DG^{k-1}\left(\T{1}\right) \times DG^{k-1}\left(\T{3}\right). \label{rt_prism_4}
\end{align}
\end{subequations}
Here, we can define the following well-known scalar spaces
\begin{align*}
	CG^{k}\left(\mathcal{T}_h\right) &:=\left\{u \in H^{1}\left(\Omega\right) : u|_{\T{3}} \in P^{k}(\T{3}), \; \forall \T{3} \in \mathcal{T}_h\right\}, \\[1.0ex]
	DG^{k}\left(\mathcal{T}_h\right) &:=\left\{u \in L^{2}\left(\Omega\right) : u|_{\T{3}} \in P^{k}(\T{3}), \; \forall \T{3} \in \mathcal{T}_h\right\},
\end{align*}
and the following vector spaces
\begin{align*}    
	N^{k}\left(\mathcal{T}_h\right) &:= \left\{u \in H\left(\text{curl}, \Omega\right) : u|_{\T{3}} \in \left(P^{k}\left(\T{3}\right)\right)^{3} \oplus \left[x \times \left(P^{k}\left(\T{3}\right)\right)^{3} \right], \; \forall \T{3} \in \mathcal{T}_h \right\}, \\[1.0ex]
	RT^{k}\left(\mathcal{T}_h\right) &:= \left\{u \in H\left(\text{div},\Omega\right) : u|_{\T{3}}\in \left(P^{k}\left(\T{3}\right)\right)^{3} + x P^{k}\left(\T{3}\right), \; \forall \T{3} \in \mathcal{T}_h \right\}.
\end{align*}
\subsection{Bubble Spaces}
Here, we introduce the following bubble spaces which act as complementary spaces to the full spaces in Eqs.~\eqref{rt_prism_0}--\eqref{rt_prism_4}
\begin{subequations}
\begin{align}
    \Vkcirc{k}{0}(\W{4}) &:= \text{span} \left\{ \vartheta_{ij\ell} (x_1, x_2, x_3) \vartheta_{m}(x_4)\right\}, \label{rt_prism_0_bubble} \\[1.0ex]
    \nonumber \Vkcirc{k}{1}(\W{4}) &:= \text{span} \left\{ \begin{bmatrix} \Phi_{ij\ell}^{r}(x_1, x_2, x_3) \\ 0 \end{bmatrix} \vartheta_{m}(x_4) \right\} \\
    & \oplus \; \, \text{span} \left\{ \vartheta_{ij\ell}(x_1, x_2, x_3) \left[0, 0, 0, \varrho_{m} (x_4) \right]^T  \right\}, \label{rt_prism_1_bubble} \\[1.0ex]
    \nonumber \Vkcirc{k}{2}(\W{4}) &:= \text{span} \left\{\begin{bmatrix} \begin{bmatrix} 0 & 0 & 0 \\
    0 & 0 & 0 \\
    0 & 0 & 0 \end{bmatrix} & \Phi_{ij\ell}^{r}(x_1, x_2, x_3) \\
    \ast & 0 \end{bmatrix} \varrho_{m}(x_4) \right\}  \\
    &\oplus \; \, \text{span} \left\{ \begin{bmatrix} 0 & \left[\Psi_{ij\ell}^{r}(x_1, x_2, x_3)\right]_{3} & -\left[\Psi_{ij\ell}^{r}(x_1, x_2, x_3)\right]_{2} & 0 \\
    \ast & 0 & \left[\Psi_{ij\ell}^{r}(x_1, x_2, x_3)\right]_{1} & 0 \\
    \ast & \ast & 0 & 0 \\
    0 & 0 & 0 & 0 \end{bmatrix} \vartheta_{m}(x_4) \right\}, \label{rt_prism_2_bubble} \\[1.0ex]
    \nonumber \Vkcirc{k}{3}(\W{4}) &:= \text{span} \left\{ \varrho_{ij\ell}(x_1, x_2, x_3) \left[0, 0, 0, \vartheta_{m}(x_4) \right]^{T} \right\} \\
    &\oplus \; \, \text{span} \left\{  \begin{bmatrix} \Psi_{ij\ell}^{r} (x_1, x_2, x_3) \\ 0 \end{bmatrix} \varrho_{m}(x_4) \right\}. \label{rt_prism_3_bubble}
\end{align}
\end{subequations}
Here, the $\vartheta_{m}$'s are H1-conforming bubble functions of degree $k$ on line segments
\begin{align*}
    \vartheta_{m}(x_4) = L_m\left(\nu_2\right),
\end{align*}
where $m = 2, \ldots, k$ is the indexing parameter, $\nu = \nu(x_4) = (\nu_1, \nu_2)$ are the barycentric coordinates for the segment, and $L_m$ are integrated and scaled Legendre polynomials (see~\cite{fuentes2015orientation}). In a similar fashion, the $\vartheta_{ij\ell}$'s are H1-conforming bubble functions of degree $k$ on tetrahedra
\begin{align*}
    \vartheta_{ij\ell}(x_1, x_2, x_3) &= L_i \left(\frac{\lambda_2}{\lambda_1 + \lambda_2} \right) L_{j}^{2i} \left(\frac{\lambda_3}{\lambda_1 + \lambda_2 + \lambda_3} \right) L_{\ell}^{2(i+j)}\left(\lambda_4\right) \left(\lambda_1 + \lambda_2\right)^{i} \left(\lambda_1 + \lambda_2 + \lambda_3\right)^{j},
\end{align*}
where $i \geq 2$, $j\geq 1$, $\ell \geq 1$, $n = i +j + \ell = 4, \ldots, k$ are the indexing parameters, $\lambda = \lambda(x) = \lambda(x_1, x_2, x_3) = (\lambda_1, \lambda_2, \lambda_3, \lambda_4)$ are barycentric coordinates for the tetrahedron, and $L_j^{\alpha}$ are integrated and scaled Jacobi polynomials. 

In addition, the $\Phi_{ij\ell}^{r}$'s are H(curl)-conforming bubble functions of degree $k-1$ on tetrahedra
\begin{align*}
    \Phi_{ij\ell}^{r}(x_1, x_2, x_3) = &P_i\left(\frac{\lambda_b}{\lambda_a + \lambda_b} \right)  L_{j}^{2i+1}\left(\frac{\lambda_c}{\lambda_a + \lambda_b + \lambda_c} \right) L_{\ell}^{2(i+j)}\left(\lambda_d \right) \\[1.0ex]
    &\cdot \left(\lambda_a \nabla \lambda_b - \lambda_b \nabla \lambda_a \right) \left(\lambda_a + \lambda_b\right)^{i} \left(\lambda_a + \lambda_b + \lambda_c\right)^{j},
\end{align*}
%
where $i \geq 0$, $j \geq 1$, $\ell \geq 1$, $n = i +j +\ell = 2, \ldots, k-1$ are the indexing parameters, and $P_i$ are the shifted and scaled Legendre polynomials. In addition, for $r = 1,2,3$ we set $(a,b,c,d) = (1,2,3,4)$, $(a,b,c,d) = (2,3,4,1)$, and $(a,b,c,d) = (3,4,1,2)$, respectively.

Next, the $\Psi_{ij\ell}^{r}$'s are H(div)-conforming bubble functions of degree $k-1$ on tetrahedra
\begin{align*}
    \Psi_{ij\ell}^{r}(x_1, x_2, x_3) = &P_i\left(\frac{\lambda_b}{\lambda_a + \lambda_b}\right) P_j^{2i+1}\left(\frac{\lambda_c}{\lambda_a + \lambda_b + \lambda_c}\right) L_{\ell}^{2(i+j+1)}\left(\lambda_d \right) \\[1.0ex]
    &\cdot \left(\lambda_a \nabla \lambda_b \times \nabla \lambda_{c} + \lambda_b \nabla \lambda_{c} \times \nabla \lambda_{a} + \lambda_{c} \nabla\lambda_{a} \times \nabla \lambda_{b} \right) \left(\lambda_a + \lambda_b\right)^{i} \left(\lambda_a + \lambda_b + \lambda_c\right)^{j},
\end{align*}
%
%
where $i \geq 0$, $j \geq 0$, $\ell \geq 1$, $n = i +j +\ell = 1, \ldots, k-1$ are the indexing parameters, and $P_j^{\alpha}$ are the shifted Jacobi polynomials. In addition, for $r = 1,2,3$ we set $(a,b,c,d) = (1,2,3,4)$, $(a,b,c,d) = (2,3,4,1)$, and $(a,b,c,d) = (3,4,1,2)$, respectively.

Furthermore, the $\varrho_{m}$'s are the L2-conforming bubble functions of degree $k-1$ on line segments
\begin{align*}
    \varrho_{m}(x_4) = P_m\left(\nu_2\right), 
\end{align*}
where $m = 0, \ldots, k-1$ is the indexing parameter. Similarly, the $\varrho_{ij\ell}$'s are the L2-conforming bubble functions of degree $k-1$ on tetrahedra
\begin{align*}
    \varrho_{ij\ell}(x_1, x_2, x_3) &= P_i\left(\frac{\lambda_2}{\lambda_1 + \lambda_2}\right) P_{j}^{2i+1}\left(\frac{\lambda_3}{\lambda_1+\lambda_2+\lambda_3}\right) P_{\ell}^{2(i+j+1)}\left(\lambda_4 \right) \left(\lambda_1 + \lambda_2\right)^{i} \left(\lambda_1 + \lambda_2 + \lambda_3\right)^{j},
\end{align*}
where $i \geq 0$, $j \geq 0$, $\ell \geq 0$, $n = i+j+\ell = 0, \ldots, k-1$ are the indexing parameters.

\subsection{Restatement of Polynomial Functions}

We can now restate the polynomial functions from the previous section in terms of polynomial spaces $P^{k}(\T{1})$, $P^{k}(\T{3})$, $dP^{k}(\T{1})$, and $dP^{k}(\T{3})$. The former two polynomial spaces, $P^{k}(\T{1})$ and $P^{k}(\T{3})$, are associated with dofs that reside on the boundary \emph{and} within the interior of each element, whereas the latter two spaces, $dP^{k}(\T{1})$ and $dP^{k}(\T{3})$, are associated with dofs which reside only within the interior. With this in mind, let us consider
\begin{align*}
       \vartheta_{m}(x_4) = \vartheta^{b}(x_4) p_m(x_4), \qquad \forall p_{m}(x_4)\in P^{k-2}(\T{1}),
\end{align*}
where 
\begin{align*}
    \vartheta^{b}(x_4) = \nu_1 \nu_2.
\end{align*}
Here, $\vartheta^{b}(x_4)$ is a non-negative bubble function shared by all members of the set. Next, consider
\begin{align*}
    \vartheta_{ij\ell}(x_1, x_2, x_3) = \vartheta^{b}(x_1, x_2, x_3) p_{ij\ell}(x_1, x_2, x_3), \qquad \forall p_{ij\ell}(x_1, x_2, x_3) \in P^{k-4}(\T{3}),
\end{align*}
where
\begin{align*}
    \vartheta^{b}(x_1, x_2, x_3) &= \lambda_1 \lambda_2 \lambda_3 \lambda_4.
\end{align*}
In addition, consider
\begin{align*}
    \Phi_{ij\ell}^{r}(x_1, x_2, x_3) = \Phi^{b,r}(x_1, x_2, x_3) g_{ij\ell}^{r}(x_1, x_2, x_3) N^{r}(x_1, x_2, x_3), \qquad \forall g_{ij\ell}^{r}(x_1, x_2, x_3) \in P^{k-3}(\T{3}),
\end{align*}
where
\begin{align*}
    \Phi^{b,r}(x_1, x_2, x_3) &= \lambda_{c} \lambda_{d}, \qquad N^{r}(x_1,x_2,x_3) = \lambda_{a} \nabla \lambda_{b} - \lambda_{b} \nabla \lambda_{a}.
\end{align*}
Furthermore, consider 
\begin{align*}
    \Psi_{ij\ell}^{r}(x_1, x_2, x_3) = \Psi^{b,r}(x_1, x_2, x_3) w_{ij\ell}^{r}(x_1, x_2, x_3) \mathcal{N}^{r}(x_1, x_2, x_3), \qquad \forall w_{ij\ell}^{r}(x_1, x_2, x_3) \in P^{k-2}(\T{3}),
\end{align*}
where
\begin{align*}
    \Psi^{b,r}(x_1, x_2, x_3) &=  \lambda_{d}, \qquad \mathcal{N}^{r}(x_1,x_2,x_3) = \lambda_{a} \nabla \lambda_{b} \times \nabla \lambda_{c} + \lambda_b \nabla \lambda_{c} \times \nabla \lambda_{a} + \lambda_{c} \nabla\lambda_{a} \times \nabla \lambda_{b}.
\end{align*}
Lastly, consider
\begin{align*}
    \varrho_{m}(x_4) = v_m(x_4), \qquad \forall v_{m}(x_4) \in dP^{k-1}(\T{1}),
\end{align*}
%
and
\begin{align*}
    \varrho_{ij\ell}(x_1, x_2, x_3) = v_{ij\ell}(x_1, x_2, x_3), \qquad \forall v_{ij\ell}(x_1, x_2, x_3) \in dP^{k-1}(\T{3}).
\end{align*}
%

\subsection{Restatement of Bubble Spaces}
We can now restate the bubble space definitions (Eqs.~\eqref{rt_prism_0_bubble}--\eqref{rt_prism_3_bubble}) in terms of the polynomial functions from the previous section
\begin{subequations}
\begin{align}
    \Vkcirc{k}{0}(\W{4}) := & \, \text{span} \left\{ \vartheta^{b}(x_1,x_2,x_3)\vartheta^{b}(x_4) p_{ij\ell}(x_1,x_2,x_3)p_m(x_4) \right\}, \label{rt_prism_0_bubble_simp} \\[1.0ex] \nonumber &\forall p_{m}(x_4)\in P^{k-2}(\T{1}), \quad p_{ij\ell}(x_1,x_2,x_3) \in P^{k-4}(\T{3}), \\[1.0ex]
    \nonumber \Vkcirc{k}{1}(\W{4}) := & \text{span} \left\{  \begin{bmatrix} \Phi^{b,r}(x_1,x_2,x_3) \vartheta^{b}(x_4) g_{ij\ell}^{r}(x_1,x_2,x_3) p_m(x_4) N^{r}(x_1, x_2, x_3)  \\[1.0ex] 0 \end{bmatrix} \right\} \\[1.0ex]
    \oplus & \, \text{span} \left\{ \left[0, 0, 0, \vartheta^{b}(x_1,x_2,x_3) p_{ij\ell}(x_1,x_2,x_3) v_{m}(x_4) \right]^T    \right\}, \label{rt_prism_1_bubble_simp} \\[1.0ex]
    \nonumber &\forall p_{m}(x_4)\in P^{k-2}(\T{1}), \quad  g_{ij\ell}^{r}(x_1,x_2,x_3) \in P^{k-3}(\T{3}), \\[1.0ex]
    \nonumber &\forall v_{m}(x_4) \in dP^{k-1}(\T{1}), \quad p_{ij\ell}(x_1,x_2,x_3) \in P^{k-4}(\T{3}),
\end{align}

\begin{align}
    \nonumber \Vkcirc{k}{2}(\W{4}) &:= \text{span} \left\{ \VtoM{\begin{bmatrix} 0 \\[1.0ex] 0 \\[1.0ex] \left[\Phi^{b,r}(x_1,x_2,x_3)  g_{ij\ell}^{r}(x_1,x_2,x_3) v_{m}(x_4) N^{r}(x_1,x_2,x_3) \right]_{1}  \\[1.0ex] 0 \\[1.0ex] \left[\Phi^{b,r}(x_1,x_2,x_3) g_{ij\ell}^{r}(x_1,x_2,x_3) v_{m}(x_4) N^{r}(x_1, x_2, x_3) \right]_{2}  \\[1.0ex] \left[\Phi^{b,r}(x_1,x_2,x_3)  g_{ij\ell}^{r}(x_1,x_2,x_3) v_{m}(x_4) N^{r}(x_1, x_2, x_3) \right]_{3}  \end{bmatrix}} \right\} \\[1.0ex]
    & \oplus \text{span} \left\{\VtoM{\begin{bmatrix}
	\left[ \Psi^{b,r}(x_1,x_2,x_3) \vartheta^{b}(x_4) w_{ij\ell}^{r}(x_1,x_2,x_3) p_{m}(x_4) \mathcal{N}^{r}(x_1,x_2,x_3) \right]_{3} \\[1.0ex]
	\left[ \Psi^{b,r}(x_1,x_2,x_3) \vartheta^{b}(x_4) w_{ij\ell}^{r}(x_1,x_2,x_3) p_{m}(x_4) \mathcal{N}^{r}(x_1,x_2,x_3) \right]_{2} \\[1.0ex]
	0\\[1.0ex]
	\left[ \Psi^{b,r}(x_1,x_2,x_3) \vartheta^{b}(x_4) w_{ij\ell}^{r}(x_1,x_2,x_3) p_{m}(x_4) \mathcal{N}^{r}(x_1,x_2,x_3) \right]_{1} \\[1.0ex]
	0\\[1.0ex]
	0
	\end{bmatrix}} \right\}, \label{rt_prism_2_bubble_simp} 
    \\[1.0ex]
    \nonumber & \forall v_{m}(x_4)\in dP^{k-1}(\T{1}), \quad 
    g_{ij\ell}^{r}(x_1,x_2,x_3) \in P^{k-3}(\T{3}), \\[1.0ex]
    \nonumber & \forall p_{m}(x_4)\in P^{k-2}(\T{1}), \quad 
    w_{ij\ell}^{r}(x_1,x_2,x_3) \in P^{k-2}(\T{3}),
\end{align}
\begin{align}
    \nonumber \Vkcirc{k}{3}(\W{4}) &:= \text{span} \left\{ \left[0, 0, 0, \vartheta^{b}(x_4)  v_{ij\ell}(x_1, x_2, x_3) p_{m}(x_4) \right]^{T} \right\} \\
    &\oplus \text{span} \left\{ \begin{bmatrix}  \Psi^{b,r} (x_1, x_2, x_3) w_{ij\ell}^{r}(x_1, x_2, x_3) v_{m}(x_4) \mathcal{N}^{r}(x_1, x_2, x_3)  \\[1.0ex] 0  \end{bmatrix} \right\}, \label{rt_prism_3_bubble_simp}
    \\[1.0ex]
    \nonumber & \forall p_{m}(x_4)\in P^{k-2}(\T{1}), \quad v_{ij\ell}(x_1,x_2,x_3)\in dP^{k-1}(\T{3}), \\[1.0ex]
    \nonumber &\forall v_{m}(x_4) \in dP^{k-1}(\T{1}), \quad 
    w_{ij\ell}^{r}(x_1,x_2,x_3) \in P^{k-2}(\T{3}). 
\end{align}
\end{subequations}


\subsection{Degrees of Freedom on the Reference Tetrahedral Prism, $\W{4}$}

Our objective is to construct degrees of freedom for the N{\'e}d{\'e}lec-Raviart-Thomas-based sequence (Eqs.~\eqref{rt_prism_0}--\eqref{rt_prism_4}) on the reference tetrahedral prism $\W{4}$. We recall from \autoref{Table:4DelementsSubs}, that the reference tetrahedral prism has 8 vertices, 16 edges, 8 triangular faces, 6 quadrilateral faces, 2 tetrahedral facets, and 4 triangular-prismatic facets. The degrees of freedom on these vertices, edges, faces, and facets of the tetrahedral prism are inherited directly from the degrees of freedom for lower dimensional entities in 0, 1, 2, and 3 dimensions, respectively. Therefore, it remains for us to construct degrees of freedom for the interior of the tetrahedral prism. We will construct explicit expressions for these degrees of freedom for 0-, 1-, 2-, 3-, and 4-forms in what follows.

\subsection{Dofs for 0-forms on $\W{4}$}
The polynomial 0-forms on the tetrahedral prism, $\Vk{k}{0}(\W{4})$, have the following total dimension 
\begin{align*}
\text{dim}\left(\Vk{k}{0}(\W{4})\right) &= \text{dim}(\Sigma^{k,0}(\W{4})) \\
&= \text{dim}(CG^{k}\left(\T{1}\right) \times CG
	^{k} \left(\T{3}\right)) = \frac{1}{6}(k+1)^{2}(k+2)(k+3).
\end{align*}
The 0-forms have vertex, edge, face, and facet traces in accordance with Eqs.~\eqref{eq:edge-0}, \eqref{eq:triangle-0}, \eqref{eq:square-0}, \eqref{eq:tet0}, and \eqref{eq:triprism0}. As a result, the dimension of the trace degrees of freedom, $\Sigma_{trace}^{k,0}(\W{4})$, can be computed as follows
\begin{align*}
\text{dim} (\Sigma_{trace}^{k,0}(\W{4})) &= 8 + 16 \, \text{dim}\left(P^{k-2}(\T{1}) \right) + 8 \, \text{dim} \left( P^{k-3}(\T{2}) \right) + 6 \, \text{dim} \left( Q^{k-2,k-2}(\C{2}) \right) \\
&+ 2 \, \text{dim}\left( P^{k-4}(\T{3}) \right) + 4 \left( \text{dim}(Q^{k-2}(\C{1})) \times \text{dim}(P^{k-3}(\T{2})) \right)
\\
&= 8 + 16(k-1) + \frac{8}{2}(k-2)(k-1) + 6(k-1)^2 \\
&+ \frac{2}{6}(k-3)(k-2)(k-1) + \frac{4}{2}(k-2)(k-1)^2 \\
&= \frac{1}{3}k(7k^2 + 17).
\end{align*}
In addition, the volumetric degrees of freedom for the 0-form proxy $u$ are given by
\begin{align}
    \Sigma_{vol}^{k,0}(\W{4}):= \left\{u \rightarrow \int_{\W{4}} u q \, \vartheta^{b}(x_1, x_2, x_3) \vartheta^{b}(x_4), \qquad q\in P^{k-2}(\T{1}) \times P^{k-4}(\T{3})\right\}, \label{rt_zero_form}
\end{align}
and
\begin{align*}
    \text{dim}(\Sigma_{vol}^{k,0}(\W{4})) = \frac{1}{6}(k-3)(k-2)(k-1)^2.
\end{align*}
In a natural fashion, one can show that the total number of degrees of freedom on the tetrahedral prism is equal to the sum of the trace and volumetric degrees of freedom
\begin{align*}
    \text{dim}(\Sigma^{k,0}(\W{4})) &= \text{dim} (\Sigma_{trace}^{k,0}(\W{4})) + \text{dim}(\Sigma_{vol}^{k,0}(\W{4})).
\end{align*}
It remains for us to prove unisolvency.
\begin{lemma}
    Let $u\in \Vk{k}{0}(\W{4})$ be a polynomial 0-form for which all the degrees of freedom $\Sk{k}{0}(\W{4})$ vanish. Then $u \equiv 0.$
    \label{prism_lemma_0}
\end{lemma}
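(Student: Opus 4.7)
The plan is to mirror the strategy used for the pentatope 0-form case, adapted to the tensor-product structure of $\W{4}$. First, I would observe that the dimension counts already line up, so unisolvency reduces to showing that the vanishing of every dof forces $u \equiv 0$. The argument splits into a boundary step and an interior step.

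For the boundary step, I would argue that the vanishing of the vertex, edge, triangular-face, quadrilateral-face, tetrahedral-facet, and triangular-prismatic-facet dofs successively forces $\mathrm{Tr}(u)$ to vanish on all of $\partial \W{4}$. At the vertex level this is immediate. Along each edge, the vanishing dofs on the two endpoint vertices plus the dofs from Eq.~\eqref{eq:edge-0} show the edge trace (a degree $k$ polynomial with $k-1$ vanishing moments and two vanishing endpoints) is zero. Proceeding inductively up the dimension, the standard unisolvency on tetrahedra and on the triangular prism (which is itself known classically, e.g. via the tensor-product decomposition $P^k(\T{1}) \otimes P^k(\T{2})$) shows the trace on each triangular-prismatic facet and each tetrahedral facet is zero. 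Hence $u$ vanishes on the entire boundary of $\W{4}$.

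For the interior step, since $u \in P^k(\T{1}) \otimes P^k(\T{3})$ vanishes on each of the four triangular-prismatic facets $\{\lambda_i = 0\}$ and on each of the two tetrahedral facets $\{\nu_j = 0\}$, I can factor out the full boundary bubble:
\begin{equation*}
u(x_1,x_2,x_3,x_4) = \vartheta^{b}(x_1,x_2,x_3)\,\vartheta^{b}(x_4)\,p(x_1,x_2,x_3,x_4),
\end{equation*}
where $\vartheta^{b}(x_1,x_2,x_3) = \lambda_1\lambda_2\lambda_3\lambda_4$ has degree $4$ in $(x_1,x_2,x_3)$ and $\vartheta^{b}(x_4) = \nu_1\nu_2$ has degree $2$ in $x_4$. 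Comparing degrees then forces $p \in P^{k-2}(\T{1}) \times P^{k-4}(\T{3})$, which is exactly the test-function space appearing in Eq.~\eqref{rt_zero_form}. Choosing $q = p$ in the volumetric dof gives
\begin{equation*}
0 \;=\; \int_{\W{4}} u\, p\, \vartheta^{b}(x_1,x_2,x_3)\,\vartheta^{b}(x_4) \;=\; \int_{\W{4}} p^{2}\,\bigl[\vartheta^{b}(x_1,x_2,x_3)\,\vartheta^{b}(x_4)\bigr]^{2}\, dx.
\end{equation*}
Since $\lambda_i \geq 0$ on $\T{3}$ and $\nu_j \geq 0$ on $\T{1}$, the weight is non-negative and strictly positive on the interior, so $p \equiv 0$ and therefore $u \equiv 0$.

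I expect the only real obstacle to be the boundary step: one must carefully justify that the two different kinds of facet dofs (triangular-prismatic versus tetrahedral) individually unisolvently pin down the trace on each facet. For the tetrahedral facets this is the classical unisolvency of Lagrange elements on $\T{3}$; for the triangular-prismatic facets this is the well-known unisolvency of the tensor-product $P^k(\T{1}) \otimes P^k(\T{2})$ element, which follows from induction on the tensor-product construction used in Section~5. Once these facet-level results are cited, the factorization and positivity argument above proceeds without subtlety.
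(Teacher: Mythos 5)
Your proposal is correct and follows essentially the same route as the paper's proof: vanishing trace dofs place $u$ in the bubble space (equivalently, allow the factorization $u = \vartheta^{b}(x_1,x_2,x_3)\,\vartheta^{b}(x_4)\,p$ with $p \in P^{k-2}(\T{1}) \times P^{k-4}(\T{3})$), and then testing the volumetric dof of Eq.~\eqref{rt_zero_form} against $q = p$ yields a non-negative integrand that forces $p \equiv 0$. The extra detail you supply on the facet-by-facet trace argument is a more explicit version of what the paper asserts in one line, not a different method.
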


\begin{proof}
	
    Since all the trace degrees of freedom of the form given by Eqs.~\eqref{eq:edge-0}, \eqref{eq:triangle-0}, \eqref{eq:square-0}, \eqref{eq:tet0}, and \eqref{eq:triprism0} vanish, then we conclude that $u$ resides in the bubble space, i.e.~$u \in \Vkcirc{k}{0}(\W{4})$. Therefore, we can express $u$ in accordance with Eq.~\eqref{rt_prism_0_bubble_simp} as follows
    \begin{align*}
        u &= \sum_{ij\ell m} \widehat{u}_{ij\ell m} \, p_{ij\ell}(x_1,x_2,x_3) p_m(x_4) \vartheta^{b} (x_1,x_2,x_3) \vartheta^{b}(x_4),
    \end{align*}
    where
    \begin{align*}
         p_{ij\ell}(x_1,x_2,x_3) \in P^{k-4}(\T{3}), \quad p_{m}(x_4)\in P^{k-2}(\T{1}).
    \end{align*}
    We complete the proof by substituting $u$ (from above) and 
    \begin{align*}
        q = \sum_{ij\ell m} \widehat{u}_{ij\ell m} \, p_{ij\ell}(x_1,x_2,x_3) p_m(x_4),
    \end{align*}
    into Eq.~\eqref{rt_zero_form}. Under these circumstances, the only way the volumetric degrees of freedom are guaranteed to vanish, is if $u$ vanishes.
\end{proof}

\subsection{Dofs for 1-forms on $\W{4}$}
The polynomial 1-forms on the tetrahedral prism, $\Vk{k}{1}(\W{4})$, have the following total dimension 
\begin{align*}
    \text{dim}(\Vk{k}{1}(\W{4})) &= \text{dim}(\Sigma^{k,1}(\W{4})) \\&= \text{dim} \left(CG^{k}\left(\T{1}\right) \times \begin{bmatrix}
		N^{k-1}\left(\T{3}\right)\\ 0
	\end{bmatrix} \right) \\ 
    &+ \text{dim} \left(  \left[0, 0, 0, DG^{k-1}\left(\T{1}\right) \right]^{T} \times CG^{k}\left(\T{3}\right) \right) \\
    & = \frac{1}{2} k(k+1)(k+2)(k+3) + \frac{1}{6} k(k+1)(k+2)(k+3) \\
    & = \frac{2}{3} k(k+1)(k+2)(k+3).
\end{align*}
The 1-forms have edge, face, and facet traces in accordance with Eqs.~\eqref{eq:edge-1}, \eqref{eq:triangle-1}, \eqref{eq:square-1}, \eqref{eq:tet1}, and \eqref{eq:triprism1}. As a result, the dimension of the trace degrees of freedom, $\Sigma^{k,1}_{trace}(\W{4})$, can be computed as follows
\begin{align*}
\text{dim} \left(\Sigma^{k,1}_{trace}(\W{4})\right) &= 16 \, \text{dim}\left(P^{k-1}(\T{1}) \right) + 8 \, \text{dim} \left( (P^{k-2}(\T{2}))^{2} \right) \\
&+ 6 \left( \text{dim} (Q^{k-2,k-1}(\C{2})) + \text{dim} (Q^{k-1,k-2}(\C{2})) \right) + 2 \, \text{dim}\left( (P^{k-3}(\T{3}))^3 \right) \\
&+ 4 \left( 2\text{dim}(Q^{k-2}(\C{1})) \times \text{dim}(P^{k-2} (\T{2})) + \text{dim}(Q^{k-1}(\C{1})) \times \text{dim}(P^{k-3}(\T{2})) \right)
\\
&= 16 k +8(k-1)k + 6(2(k-1)k) + \frac{2}{2} (k-2)(k-1)k \\
&+ 4\left( k(k-1)^2 + \frac{1}{2} (k-2)(k-1) k \right) \\
&= k (7k^2 + 3k + 6).
\end{align*}
In addition, the volumetric degrees of freedom for the 1-form proxy $E$ are as follows
\begin{align}
    \nonumber \Sigma^{k,1}_{vol,1}(\W{4}) := &\Bigg\{E \rightarrow \int_{\W{4}} E \cdot \left( \sum_{r} q^{r} \Phi^{b,r}(x_1, x_2, x_3) \vartheta^{b}(x_4) \begin{bmatrix} N^{r}(x_1, x_2, x_3) \\[1.0ex] 0 \end{bmatrix} \right), \\[1.0ex] &q^{r} \in P^{k-2}(\T{1}) \times P^{k-3} (\T{3}) \Bigg\}, \qquad r = 1,2,3,   \label{rt_one_form_a} \\[1.0ex]
    \nonumber \Sigma^{k,1}_{vol,2}(\W{4}) := &\Bigg\{E \rightarrow \int_{\W{4}} E \cdot \left[0, 0, 0, q^{4} \vartheta^{b}(x_1, x_2, x_3)\right]^{T}, \\[1.0ex] &q^{4}\in dP^{k-1}(\T{1}) \times P^{k-4}(\T{3}) \Bigg\}. \label{rt_one_form_b}
\end{align}
Thereafter, we define
\begin{align*}
    \Sigma^{k,1}_{vol}(\W{4}) &:= \Sigma^{k,1}_{vol,1}(\W{4}) \cup \Sigma^{k,1}_{vol,2}(\W{4}),
\end{align*}
and
\begin{align*}
    \text{dim}\left(\Sigma^{k,1}_{vol}(\W{4})\right) &= \text{dim}\left(\Sigma^{k,1}_{vol,1}(\W{4})\right) + \text{dim}\left(\Sigma^{k,1}_{vol,2}(\W{4}) \right) \\
    &=\frac{1}{2} k(k-1)^2 (k-2) + \frac{1}{6}(k-3)(k-2)(k-1)k.
\end{align*}
Evidently, we can show that the total number of degrees of freedom is composed from the sum of interior and facet degrees of freedom:
\begin{align*}
    \text{dim}\left(\Sk{k}{1}(\W{4}) \right) &= \text{dim} \left(\Sigma^{k,1}_{trace}(\W{4}) \right) + \text{dim}\left(\Sigma^{k,1}_{vol}(\W{4})\right).
\end{align*}
It then remains for us to prove unisolvency.

\begin{lemma}
    Let $E\in \Vk{k}{1}(\W{4})$ be a polynomial 1-form for which all the degrees of freedom $\Sk{k}{1}(\W{4})$ vanish. Then $E \equiv 0.$
    \label{prism_lemma_1}
\end{lemma}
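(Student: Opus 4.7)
The plan is to parallel the proof of Lemma~\ref{prism_lemma_0}. First, by the vanishing of all edge, face, and facet trace dofs in Eqs.~\eqref{eq:edge-1}, \eqref{eq:triangle-1}, \eqref{eq:square-1}, \eqref{eq:tet1}, and \eqref{eq:triprism1}, combined with the standard unisolvency of the N{\'e}d{\'e}lec element of the first kind on $\T{3}$ and of the Lagrange elements on $\T{1}$, we conclude that $E$ belongs to the bubble space $\Vkcirc{k}{1}(\W{4})$. By Eq.~\eqref{rt_prism_1_bubble_simp}, we can then write the direct sum decomposition $E = E^{(1)} + E^{(2)}$, where $E^{(1)}$ has zero fourth component and takes the form
\[
E^{(1)} = \sum_{r,i,j,\ell,m} \hat{a}^{r}_{ij\ell m}\, \Phi^{b,r}(x_1,x_2,x_3)\, \vartheta^{b}(x_4)\, g^{r}_{ij\ell}(x_1,x_2,x_3)\, p_{m}(x_4)\, \bigl[N^{r}(x_1,x_2,x_3),\, 0\bigr]^{T},
\]
while $E^{(2)}$ has only a nonzero fourth component of the form $\vartheta^{b}(x_1,x_2,x_3) \sum_{i,j,\ell,m} \hat{b}_{ij\ell m}\, p_{ij\ell}(x_1,x_2,x_3)\, v_{m}(x_4)$.

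Because every test function in $\Sigma^{k,1}_{vol,1}$ has zero fourth component and every test function in $\Sigma^{k,1}_{vol,2}$ has only a nonzero fourth component, these two dof families independently annihilate $E^{(1)}$ and $E^{(2)}$. For $E^{(2)}$, I would choose $q^{4} = \sum_{i,j,\ell,m} \hat{b}_{ij\ell m} p_{ij\ell}(x_1,x_2,x_3) v_{m}(x_4)$ in Eq.~\eqref{rt_one_form_b}. The resulting integrand is $\vartheta^{b}(x_1,x_2,x_3)^{2} \bigl(\sum_{i,j,\ell,m} \hat{b}_{ij\ell m} p_{ij\ell} v_{m}\bigr)^{2}$, which is non-negative with positive weight on the interior; its vanishing forces the linear combination to be identically zero, and the basis property of $\{p_{ij\ell} v_{m}\}$ for $P^{k-4}(\T{3}) \otimes dP^{k-1}(\T{1})$ yields $\hat{b}_{ij\ell m} = 0$, so $E^{(2)} \equiv 0$.

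For $E^{(1)}$, I would test with $q = \sum_{r'} q^{r'} \Phi^{b,r'} \vartheta^{b}(x_4) [N^{r'},0]^{T}$ for tensor-product test functions $q^{r'}(x_1,x_2,x_3,x_4) = \tilde{q}^{r'}(x_1,x_2,x_3) h(x_4)$ with $\tilde{q}^{r'} \in P^{k-3}(\T{3})$ and $h \in P^{k-2}(\T{1})$. After Fubini and a standard positivity argument on the $x_{4}$ factor (where $(\vartheta^{b}(x_4))^{2}$ is strictly positive in the interior of $\T{1}$), the dof-vanishing statement reduces, for each index $m$, to the identity
\[
\int_{\T{3}} \Bigl(\sum_{r} \tilde{c}^{r}_{m}(x_1,x_2,x_3)\, \Phi^{b,r}\, N^{r}\Bigr) \cdot \Bigl(\sum_{r'} \tilde{q}^{r'}\, \Phi^{b,r'}\, N^{r'}\Bigr) dx' = 0,
\]
where $\tilde{c}^{r}_{m} = \sum_{i,j,\ell} \hat{a}^{r}_{ij\ell m} g^{r}_{ij\ell}$, for all admissible $\tilde{q}^{r'}$. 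The left factor is a N{\'e}d{\'e}lec bubble on $\T{3}$, and the right factor ranges over the volumetric test space of the tetrahedral N{\'e}d{\'e}lec-of-the-first-kind element. Appealing to the classical unisolvency of this element, (see e.g.~\cite{monkbook}), the N{\'e}d{\'e}lec bubble vanishes, and the linear independence of the basis functions $\{g^{r}_{ij\ell} \Phi^{b,r} N^{r}\}$ forces $\hat{a}^{r}_{ij\ell m} = 0$ for all indices. Hence $E^{(1)} \equiv 0$.

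The main obstacle is the handling of $E^{(1)}$: unlike in the 0-form case, the N{\'e}d{\'e}lec basis functions $\{N^{r}\}$ are not $L^{2}$-orthogonal, so a direct ``diagonal'' substitution does not yield a clean sum of squares. The resolution is to exploit the tensor-product structure of the prism to separate the spatial and temporal integrals via Fubini, and then invoke the already-established unisolvency of the tetrahedral N{\'e}d{\'e}lec bubble space against its volumetric dofs. This same tensor-product strategy will apply (with minor modifications) in the subsequent unisolvency results for 2-, 3-, and 4-forms on $\W{4}$.
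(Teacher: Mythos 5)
Your proof is correct and takes essentially the same route as the paper's: the vanishing trace dofs place $E$ in the bubble space $\Vkcirc{k}{1}(\W{4})$, and the two summands of Eq.~\eqref{rt_prism_1_bubble_simp} are then annihilated separately by the volumetric dofs \eqref{rt_one_form_a} and \eqref{rt_one_form_b} with diagonal test functions. Two remarks on where you deviate. First, the obstacle you cite for $E^{(1)}$ is illusory: the direct diagonal choice $q^{r}=\sum_{ij\ell m}\widehat{E}^{r}_{ij\ell m}g^{r}_{ij\ell}p_{m}$ in Eq.~\eqref{rt_one_form_a} yields the integrand $(\vartheta^{b}(x_4))^{2}\left|\sum_{r}q^{r}\Phi^{b,r}N^{r}\right|^{2}$, the squared Euclidean norm of a vector field, which is non-negative with no $L^2$-orthogonality of the $N^{r}$ required; its vanishing forces the field to vanish, and the linear independence of the functions $g^{r}_{ij\ell}p_{m}\Phi^{b,r}N^{r}$ --- which your argument also invokes at the very end --- finishes the job. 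This is exactly the paper's (terser) argument. Second, in your Fubini route the spatial test functions $\sum_{r'}\tilde q^{r'}\Phi^{b,r'}N^{r'}$ span the tetrahedral H(curl) \emph{bubble} space rather than $(P^{k-3}(\T{3}))^{3}$, so the classical unisolvency of the first-kind N{\'e}d{\'e}lec element does not apply verbatim at that step; the clean fix is again the diagonal choice of $\tilde q^{r'}$, after which your conclusion stands.
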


\begin{proof}
	
    Since all the trace degrees of freedom of the form given by Eqs.~\eqref{eq:edge-1}, \eqref{eq:triangle-1}, \eqref{eq:square-1}, \eqref{eq:tet1}, and \eqref{eq:triprism1} vanish, the polynomial 1-form $E$ has zero traces, and is hence in $\Vkcirc{k}{1}(\W{4})$. It therefore has the form in Eq.~\eqref{rt_prism_1_bubble_simp}, i.e.,
    \begin{align*}
        E = \begin{bmatrix}
           \sum_{ij\ell m r} \widehat{E}_{ij \ell m}^{r} g_{ij \ell}^{r}(x_1, x_2, x_3) p_m(x_4) \Phi^{b,r}(x_1,x_2,x_3) \vartheta^{b}(x_4) N^{r}(x_1, x_2, x_3)  \\[1.0ex]
           \sum_{ij \ell m} \widehat{E}_{ij \ell m}^{4} p_{ij\ell}(x_1, x_2, x_3) v_m(x_4) \vartheta^{b}(x_1,x_2,x_3)
        \end{bmatrix},
    \end{align*}
    where
    \begin{align*}
        \nonumber & p_{m}(x_4)\in P^{k-2}(\T{1}), \quad g_{ij\ell}^{r}(x_1,x_2,x_3) \in P^{k-3}(\T{3}), \\[1.0ex]
        \nonumber & v_{m}(x_4) \in dP^{k-1}(\T{1}), \quad p_{ij\ell}(x_1,x_2,x_3) \in P^{k-4}(\T{3}).
    \end{align*}
    The proof follows immediately by choosing test functions,
    \begin{align*}
        q^{r} &= \sum_{ij\ell m} \widehat{E}_{ij \ell m}^{r} g_{ij \ell}^{r}(x_1, x_2, x_3) p_m(x_4), \\[1.0ex]
        q^{4} &= \sum_{ij \ell m} \widehat{E}_{ij \ell m}^{4} p_{ij\ell}(x_1, x_2, x_3) v_m(x_4),
    \end{align*}
    and thereafter substituting these test functions and $E$ (from above) into Eqs.~\eqref{rt_one_form_a} and \eqref{rt_one_form_b}. Under these conditions, the vanishing of the associated volumetric degrees of freedom is only possible if $E$ vanishes.
\end{proof}

\subsection{Dofs for 2-forms on $\W{4}$}
The polynomial 2-forms on the tetrahedral prism, $\Vk{k}{2}(\W{4})$, have the following total dimension 
\begin{align*}
    \text{dim}\left(\Vk{k}{2}(\W{4}) \right) &= \text{dim}\left(\Sk{k}{2}(\W{4}) \right) \\
    &= \text{dim} \left( DG^{k-1}\left(\T{1} \right) \times \begin{bmatrix} \begin{bmatrix}
			0 & 0 & 0 \\
			0 & 0 & 0 \\
			0 & 0 & 0
		\end{bmatrix}
		& N^{k-1}\left(\T{3} \right) \\
		\ast & 0
	\end{bmatrix} \right) \\[1.0ex]
	&+ \text{dim} \left( CG^{k} \left(\T{1}\right) \times \begin{bmatrix}
		0 & RT^{k-1}_{3} (\T{3})  & -RT^{k-1}_{2} (\T{3}) & 0 \\
		\ast & 0 & RT^{k-1}_{1} (\T{3}) & 0 \\
		\ast & \ast & 0 & 0 \\
		0 & 0 & 0 & 0
	\end{bmatrix} \right) \\
    & = \frac{1}{2} k^2 (k+2)(k+3) + \frac{1}{2} k (k+1)^2 (k+3).
\end{align*}
The 2-forms have face and facet traces in accordance with Eqs.~\eqref{eq:triangle-2}, \eqref{eq:square-2}, \eqref{eq:tet2}, and \eqref{eq:triprism2}. As a result, the dimension of the trace degrees of freedom, $\Sk{k}{2}_{trace}(\W{4})$, can be computed as follows
\begin{align*}
    \text{dim}\left(\Sk{k}{2}_{trace}(\W{4}) \right) &= 8 \,\text{dim} (P^{k-1}(\T{2})) + 6 \, \text{dim}(Q^{k-1,k-1}(\C{2}))  +2 \, \text{dim}\left((P^{k-2}(\T{3}))^{3} \right)\\
    &+ 4\left(2\text{dim}(Q^{k-1}(\C{1})) \times \text{dim}(P^{k-2}(\T{2})) +  \text{dim}(Q^{k-2}(\C{1})) \times \text{dim}(P^{k-1}(\T{2})) \right) \\
    &= \frac{8}{2} k(k+1) + 6 k^2 + \frac{2}{2}(k-1)k(k+1) + 4 \left((k-1)k^2 + \frac{1}{2}(k-1)k(k+1) \right) \\
    &= k(7k^2+6k+1).
\end{align*}
In addition, the volumetric degrees of freedom for the 2-form proxy $F$ are as follows
\begin{align}
    \nonumber \Sk{k}{2}_{vol,1}(\W{4}) &:= \Bigg\{F \rightarrow \int_{\W{4}} F : \VtoM{\begin{bmatrix} 0 \\[1.0ex] 0 \\[1.0ex] \left[ \sum_{r} q^{r} \Phi^{b,r}(x_1, x_2, x_3) N^{r}(x_1, x_2, x_3) \right]_{1}  \\[1.0ex]
    0 \\[1.0ex] \left[ \sum_{r} q^{r} \Phi^{b,r}(x_1, x_2, x_3) N^{r}(x_1, x_2, x_3) \right]_{2} \\[1.0ex] \left[ \sum_{r} q^{r} \Phi^{b,r}(x_1, x_2, x_3) N^{r}(x_1, x_2, x_3) \right]_{3} \end{bmatrix}}
    \\[1.0ex]
  &q^{r} \in dP^{k-1}(\T{1}) \times P^{k-3}(\T{3})  \Bigg\}, \qquad r = 1,2,3, \label{rt_two_form_a}
\end{align}
\begin{align}
    \nonumber \Sk{k}{2}_{vol,2}(\W{4}) &:= \Bigg\{F \rightarrow \int_{\W{4}} F : \VtoM{\begin{bmatrix} \left[\sum_{r} q^{r} \Psi^{b,r}(x_1, x_2, x_3) \vartheta^{b}(x_4) \mathcal{N}^{r} (x_1, x_2, x_3) \right]_{3} \\[1.0ex] -\left[\sum_{r} q^{r} \Psi^{b,r}(x_1, x_2, x_3) \vartheta^{b}(x_4) \mathcal{N}^{r} (x_1, x_2, x_3) \right]_{2} \\[1.0ex] 0 \\[1.0ex] \left[\sum_{r} q^{r} \Psi^{b,r}(x_1, x_2, x_3) \vartheta^{b}(x_4) \mathcal{N}^{r} (x_1, x_2, x_3) \right]_{1} \\[1.0ex] 0 \\[1.0ex] 0 \end{bmatrix}} \\[1.0ex]
    &q^{r} \in P^{k-2}(\T{1}) \times P^{k-2}(\T{3}) \Bigg\}, \qquad r = 1,2,3. \label{rt_two_form_b}
\end{align}
Thereafter, we define
\begin{align*}
    \Sk{k}{2}_{vol}(\W{4}) &:= \Sk{k}{2}_{vol,1}(\W{4}) \cup \Sk{k}{2}_{vol,2}(\W{4}),
\end{align*}
and
\begin{align*}
    \text{dim}\left(\Sk{k}{2}_{vol}(\W{4}) \right) &= \text{dim}\left(\Sk{k}{2}_{vol,1}(\W{4}) \right) + \text{dim}\left(\Sk{k}{2}_{vol,2}(\W{4}) \right) \\
    &=\frac{1}{2}k^2(k-1)(k-2) + \frac{1}{2}(k-1)^2 k (k+1).
\end{align*}
Evidently, we can show that the following holds
\begin{align*}
    \text{dim}\left(\Sk{k}{2}(\W{4})\right) &= \text{dim} \left(\Sk{k}{2}_{trace}(\W{4}) \right) + \text{dim}\left(\Sk{k}{2}_{vol}(\W{4}) \right).
\end{align*}
It then remains for us to prove unisolvency.

\begin{lemma} \label{prism_lemma_2}
    Let $F\in \Vk{k}{2}(\W{4})$ be a polynomial 2-form for which all the degrees of freedom $\Sk{k}{2}(\W{4})$ vanish. Then $F \equiv 0.$
\end{lemma}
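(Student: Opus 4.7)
The plan is to mirror the strategy used for the 0-form and 1-form lemmas (Lemmas~\ref{prism_lemma_0} and \ref{prism_lemma_1}), but with extra care because $\Vkcirc{k}{2}(\W{4})$ from Eq.~\eqref{rt_prism_2_bubble_simp} is a direct sum of two structurally different summands, and the volume dofs in Eqs.~\eqref{rt_two_form_a} and \eqref{rt_two_form_b} are tailored to each summand separately. First I would invoke the vanishing of the face and facet trace degrees of freedom of Eqs.~\eqref{eq:triangle-2}, \eqref{eq:square-2}, \eqref{eq:tet2}, and \eqref{eq:triprism2} to conclude $F \in \Vkcirc{k}{2}(\W{4})$. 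Hence $F$ decomposes as $F = F_{1} + F_{2}$, where $F_{1}$ lies in the first summand of Eq.~\eqref{rt_prism_2_bubble_simp} (with the $\Phi^{b,r}$, $v_{m}$, $N^{r}$ structure filling the $(i,4)$-entries of the matrix) and $F_{2}$ lies in the second summand (with the $\Psi^{b,r}$, $\vartheta^{b}(x_4)$, $p_m$, $\mathcal{N}^{r}$ structure filling the $(i,j)$-entries with $i,j \in \{1,2,3\}$).

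The crucial observation is that the two summands are supported on disjoint matrix entries: $F_{1}$ populates only the fourth column and fourth row (by skew-symmetry), while $F_{2}$ populates only the upper-left $3\times 3$ skew block. Therefore when we contract $F$ against a test matrix of the form used in Eq.~\eqref{rt_two_form_a}, only $F_{1}$ contributes; when we contract against the form used in Eq.~\eqref{rt_two_form_b}, only $F_{2}$ contributes. I would then expand $F_1$ and $F_2$ in the bubble basis with unknown coefficients $\widehat{F}_{ij\ell m}^{(1),r}$ and $\widehat{F}_{ij\ell m}^{(2),r}$, and for each summand choose as test function the same linear combination,
\begin{align*}
q^{r}_{(1)} &= \sum_{ij\ell m} \widehat{F}_{ij\ell m}^{(1),r} g^{r}_{ij\ell}(x_1,x_2,x_3)\, v_m(x_4), \\
q^{r}_{(2)} &= \sum_{ij\ell m} \widehat{F}_{ij\ell m}^{(2),r} w^{r}_{ij\ell}(x_1,x_2,x_3)\, p_m(x_4),
\end{align*}
with $q^{r}_{(1)} \in dP^{k-1}(\T{1}) \times P^{k-3}(\T{3})$ and $q^{r}_{(2)} \in P^{k-2}(\T{1}) \times P^{k-2}(\T{3})$, which match exactly the admissible test spaces.

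Plugging these choices back into Eqs.~\eqref{rt_two_form_a} and \eqref{rt_two_form_b}, the vanishing of the volume dofs reduces to two integrals of sums of squares weighted by the non-negative bubble factors $\Phi^{b,r}\,\vartheta^{b}(x_4) = \lambda_c\lambda_d\,\nu_1\nu_2$ and $\Psi^{b,r}\,\vartheta^{b}(x_4) = \lambda_d\,\nu_1\nu_2$, times positive Gram contributions from $N^r \cdot N^r$ and $\mathcal{N}^r \cdot \mathcal{N}^r$ respectively (the cross terms for distinct $r$ values are handled by appealing to linear independence of the bubble basis, exactly as in Fuentes et al.~\cite{fuentes2015orientation}, which already underlies the construction of $\Phi^{b,r}_{ij\ell}$ and $\Psi^{b,r}_{ij\ell}$ on $\T{3}$). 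Since these bubble weights are strictly positive on the interior of $\W{4}$, the integrals can vanish only if every coefficient $\widehat{F}_{ij\ell m}^{(1),r}$ and $\widehat{F}_{ij\ell m}^{(2),r}$ is zero, so $F \equiv 0$.

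The main obstacle will be handling the cross terms $N^r \cdot N^s$ (and $\mathcal{N}^r \cdot \mathcal{N}^s$) for $r \neq s$: unlike the scalar 0-form case, these vectors are not orthogonal pointwise, so the positive-semidefinite reduction is not immediate. The cleanest way around this is to appeal to the fact that $\{\Phi^{r}_{ij\ell}\}_{ij\ell r}$ is a basis for the H(curl)-bubble space on $\T{3}$ (respectively $\{\Psi^{r}_{ij\ell}\}$ for the H(div)-bubble space), which forces linear independence of the three-dimensional tensor components, and consequently forces each scalar coefficient to vanish when the weighted $L^2$ pairing of the expansion with itself is zero. Once $F_1$ and $F_2$ are separately shown to vanish, the conclusion $F \equiv 0$ follows, and dimension matching guarantees unisolvency of $\Sk{k}{2}(\W{4})$ on $\Vk{k}{2}(\W{4})$.
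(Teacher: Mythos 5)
Your proposal is correct and follows essentially the same route as the paper: vanishing trace dofs place $F$ in the bubble space, the two summands of the bubble space occupy disjoint matrix entries so each family of volume dofs tests exactly one of them, and choosing the test coefficients equal to the expansion coefficients of $F$ forces each part to vanish. The one refinement worth noting is that your worry about the cross terms $N^r\cdot N^s$ is moot: with your choices of $q^{r}_{(1)}$ and $q^{r}_{(2)}$ the test matrix coincides entrywise with the corresponding component of $F$, so each integrand is literally $2\lvert F_i\rvert^2$ and its vanishing gives $F_i\equiv 0$ directly, with no appeal to positivity of the bubble weights or to linear independence of the $N^r$ (the latter is only needed if one additionally wants to conclude that each scalar coefficient vanishes).
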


\begin{proof}
	
    Since all the trace degrees of freedom of the form given by Eqs.~\eqref{eq:triangle-2}, \eqref{eq:square-2}, \eqref{eq:tet2}, and \eqref{eq:triprism2} vanish, the polynomial 2-form $F$ has zero traces, and is hence in $\Vkcirc{k}{2}(\W{4})$. It therefore has the form in Eq.~\eqref{rt_prism_2_bubble_simp}, i.e.,
    \begin{align*}
        F = \VtoM{\begin{bmatrix} \left[\sum_{ij \ell m r} \widetilde{F}_{ij \ell m}^{r} w_{ij\ell}^{r}(x_1, x_2, x_3) p_m(x_4) \Psi^{b,r}(x_1, x_2, x_3) \vartheta^{b}(x_4) \mathcal{N}^{r}(x_1, x_2, x_3) \right]_{3} \\[1.0ex]
        \left[\sum_{ij \ell m r} \widetilde{F}_{ij \ell m}^{r} w_{ij\ell}^{r}(x_1, x_2, x_3) p_m(x_4) \Psi^{b,r}(x_1, x_2, x_3) \vartheta^{b}(x_4) \mathcal{N}^{r}(x_1, x_2, x_3) \right]_{2} \\[1.0ex] 
        \left[\sum_{ij \ell m r} \widehat{F}_{ij \ell m}^{r} g_{ij\ell}^{r}(x_1, x_2, x_3) v_m(x_4) \Phi^{b,r}(x_1, x_2, x_3) N^{r}(x_1, x_2, x_3) \right]_{1} \\[1.0ex]
        \left[\sum_{ij \ell m r} \widetilde{F}_{ij \ell m}^{r} w_{ij\ell}^{r}(x_1, x_2, x_3) p_m(x_4) \Psi^{b,r}(x_1, x_2, x_3) \vartheta^{b}(x_4) \mathcal{N}^{r}(x_1, x_2, x_3) \right]_{1} \\[1.0ex] 
        \left[\sum_{ij \ell m r} \widehat{F}_{ij \ell m}^{r} g_{ij\ell}^{r}(x_1, x_2, x_3) v_m(x_4) \Phi^{b,r}(x_1, x_2, x_3) N^{r}(x_1, x_2, x_3) \right]_{2} \\[1.0ex]
        \left[\sum_{ij \ell m r} \widehat{F}_{ij \ell m}^{r} g_{ij\ell}^{r}(x_1, x_2, x_3) v_m(x_4) \Phi^{b,r}(x_1, x_2, x_3) N^{r}(x_1, x_2, x_3) \right]_{3} \end{bmatrix}},
    \end{align*}
    where
    \begin{align*}
        \nonumber & \forall v_{m}(x_4)\in dP^{k-1}(\T{1}), \quad g_{ij \ell}^{r}(x_1, x_2, x_3) \in P^{k-3}(\T{3}), \\[1.0ex]
        \nonumber & \forall p_{m}(x_4)\in P^{k-2}(\T{1}), \quad w_{ij \ell}^{r}(x_1, x_2, x_3) \in P^{k-2}(\T{3}).
    \end{align*}
    The proof follows immediately by choosing the following test functions 
    \begin{align*}
        q^{r} &= \sum_{ij \ell m} \widehat{F}_{ij \ell m}^{r} g_{ij \ell}^{r}(x_1, x_2, x_3) v_{m}(x_4), \\[1.0ex]
        q^{r} &= \sum_{ij \ell m} \widetilde{F}_{ij \ell m}^{r} w_{ij\ell}^{r}(x_1, x_2, x_3) p_{m}(x_4),
    \end{align*}
    and thereafter substituting these test functions and $F$ (from above) into Eqs.~\eqref{rt_two_form_a} and \eqref{rt_two_form_b}, respectively. Under these conditions, the vanishing of the associated volumetric degrees of freedom is only possible if $F$ vanishes.
\end{proof}

\subsection{Dofs for 3-forms on $\W{4}$}
The polynomial 3-forms on the tetrahedral prism, $\Vk{k}{3}(\W{4})$, have the following total dimension 
\begin{align*}
    \text{dim}\left(\Vk{k}{3}(\W{4}) \right) &=  \text{dim}(\Sk{k}{3}(\W{4})) \\&= \text{dim}\left( \left[0,0,0, CG^{k}\left(\T{1}\right) \right]^{T} \times DG^{k-1} \left(\T{3}\right) \right) \\
    &+ \text{dim} \left( DG^{k-1}\left(\T{1}\right) \times \begin{bmatrix}
		RT^{k-1} \left(\T{3}\right) \\ 0
	\end{bmatrix} \right) \\
  &=\frac{1}{6} k(k+1)^2 (k+2) + \frac{1}{2} k^2 (k+1)(k+3).
\end{align*}
The 3-forms only have facet traces in accordance with Eqs.~\eqref{eq:tet3} and \eqref{eq:triprism3}. As a result, the dimension of the trace degrees of freedom, $\Sk{k}{3}_{trace}(\W{4})$, can be computed as follows
\begin{align*}
\text{dim} \left(\Sk{k}{3}_{trace}(\W{4}) \right) &= 2 \,\text{dim}\left( P^{k-1}(\T{3}) \right) + 4 \left( \text{dim}(Q^{k-1}(\C{1})) \times \text{dim}(P^{k-1}(\T{2})) \right)
\\
&= \frac{2}{6} k(k+1)(k+2) + \frac{4}{2}k^2 (k+1) \\
&= \frac{1}{3} k(7k^2 + 9k + 2).
\end{align*}
In addition, the volumetric degrees of freedom for the 3-form proxy $G$ are as follows
\begin{align}
    \nonumber \Sk{k}{3}_{vol,1}(\W{4}) := &\Bigg\{G \rightarrow \int_{\W{4}} G \cdot \left( \sum_{r} q^{r} \Psi^{b,r}(x_1, x_2, x_3) \begin{bmatrix} \mathcal{N}^{r}(x_1, x_2, x_3) \\[1.0ex] 0 \end{bmatrix} \right), \\[1.0ex] 
    &q^{r} \in  dP^{k-1}(\T{1}) \times P^{k-2}(\T{3}) \Bigg\}, \qquad r = 1,2,3,\label{rt_three_form_b} \\[1.0ex]
    \nonumber \Sk{k}{3}_{vol,2}(\W{4}) := &\Bigg\{G \rightarrow \int_{\W{4}} G \cdot \left[0, 0, 0, q^{4} \vartheta^{b}(x_4) \right]^{T}, \\[1.0ex] &q^{4}\in P^{k-2}(\T{1}) \times dP^{k-1}(\T{3})  \Bigg\}. \label{rt_three_form_a} 
\end{align}
Thereafter, we define
\begin{align*}
    \Sk{k}{3}_{vol}(\W{4}) &:= \Sk{k}{3}_{vol,1}(\W{4}) \cup \Sk{k}{3}_{vol,2}(\W{4}),
\end{align*}
and
\begin{align*}
    \text{dim}\left(\Sk{k}{3}_{vol}(\W{4}) \right) &= \text{dim}\left(\Sk{k}{3}_{vol,1}(\W{4}) \right) + \text{dim}\left(\Sk{k}{3}_{vol,2}(\W{4}) \right) \\
    &=\frac{1}{2}(k-1)k^2 (k+1) + \frac{1}{6}(k-1)k(k+1)(k+2).
\end{align*}
Evidently, we can show that the following holds
\begin{align*}
    \text{dim}\left(\Sk{k}{3}(\W{4})\right) &= \text{dim} \left(\Sk{k}{3}_{trace}(\W{4})\right) + \text{dim}\left(\Sk{k}{3}_{vol}(\W{4})\right). 
\end{align*}
It then remains for us to prove unisolvency.

\begin{lemma}
    Let $G\in \Vk{k}{3}(\W{4})$ be a polynomial 3-form for which all the degrees of freedom $\Sk{k}{3}(\W{4})$ vanish. Then $G \equiv 0.$
\end{lemma}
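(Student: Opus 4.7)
The plan is to mirror the strategy already used for the 0-form, 1-form, and 2-form cases on $\W{4}$ (Lemmas~\ref{prism_lemma_0}, \ref{prism_lemma_1}, and \ref{prism_lemma_2}). The structure is three-fold: first use the vanishing of trace dofs to reduce $G$ to the bubble space, then expand in the explicit basis for that bubble space, then plug into the volumetric dof equations with judiciously chosen test functions to pin down each coefficient.

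First I would observe that since all the facet degrees of freedom in Eqs.~\eqref{eq:tet3} and \eqref{eq:triprism3} vanish, the traces of $G$ vanish on both tetrahedral facets and both triangular-prismatic facets. Consequently $G \in \Vkcirc{k}{3}(\W{4})$ and, in accordance with Eq.~\eqref{rt_prism_3_bubble_simp}, admits a decomposition
\begin{align*}
G = \begin{bmatrix} \displaystyle\sum_{ij\ell m r} \widehat{G}^{r}_{ij\ell m}\, \Psi^{b,r}(x_1,x_2,x_3)\, w^{r}_{ij\ell}(x_1,x_2,x_3)\, v_{m}(x_4)\, \mathcal{N}^{r}(x_1,x_2,x_3) \\[1.0ex] \displaystyle\sum_{ij\ell m} \widetilde{G}_{ij\ell m}\, \vartheta^{b}(x_4)\, v_{ij\ell}(x_1,x_2,x_3)\, p_{m}(x_4) \end{bmatrix},
\end{align*}
with $w^{r}_{ij\ell}\in P^{k-2}(\T{3})$, $v_{m}\in dP^{k-1}(\T{1})$, $p_{m}\in P^{k-2}(\T{1})$, and $v_{ij\ell}\in dP^{k-1}(\T{3})$.

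Next I would exploit the assumed vanishing of the two families of volumetric dofs separately, since they are designed to isolate the two summands in the decomposition. In Eq.~\eqref{rt_three_form_b}, I would select the test functions
\begin{align*}
q^{r} = \sum_{ij\ell m}\widehat{G}^{r}_{ij\ell m}\, w^{r}_{ij\ell}(x_1,x_2,x_3)\, v_{m}(x_4), \qquad r=1,2,3,
\end{align*}
which lie in the required space $dP^{k-1}(\T{1})\times P^{k-2}(\T{3})$. The inner product on the top three components of $G$ (the ones involving $\Psi^{b,r}\mathcal{N}^{r}$) then reduces to an integral of a non-negative weight $\Psi^{b,r}$ times a strictly positive sum-of-squares-type expression (using the $L^2$-orthogonality and positivity properties of the scaled Legendre and Jacobi polynomial products that define the $w^{r}_{ij\ell}$ and $v_{m}$ on tetrahedron and segment respectively), forcing $\widehat{G}^{r}_{ij\ell m}=0$ for all indices and all $r$. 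Analogously, in Eq.~\eqref{rt_three_form_a}, I would pick
\begin{align*}
q^{4}=\sum_{ij\ell m}\widetilde{G}_{ij\ell m}\, v_{ij\ell}(x_1,x_2,x_3)\, p_{m}(x_4)\; \in P^{k-2}(\T{1})\times dP^{k-1}(\T{3}),
\end{align*}
so that the fourth component of $G$ pairs against itself modulated by the strictly-positive bubble weight $\vartheta^{b}(x_4)$, yielding $\widetilde{G}_{ij\ell m}=0$.

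The main obstacle will be ensuring that the two families of test functions truly decouple the two summands, i.e.\ that the cross-terms in the integrals vanish. This is where the careful construction of the bubble space pays off: the first summand has a zero fourth component so it does not interact with $q^{4}$, while the second summand has zeros in its first three components so it does not interact with $q^{r}$ for $r=1,2,3$. Once this orthogonal-in-structure property is verified, each volumetric dof reduces to a weighted $L^2$-norm of the coefficient function against a strictly positive bubble multiplier on the interior of $\W{4}$, which is classical. A secondary, but routine, dimensional sanity check is to confirm $\dim(\Sk{k}{3}(\W{4}))=\dim(\Vk{k}{3}(\W{4}))$ using the counts already computed in the text; this has been done, so unisolvency follows from the vanishing argument alone.
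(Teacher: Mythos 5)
Your proposal is correct and follows essentially the same route as the paper, which simply states that the result follows the pattern of Lemmas~\ref{prism_lemma_0}, \ref{prism_lemma_1}, and \ref{prism_lemma_2} using the bubble-space form in Eq.~\eqref{rt_prism_3_bubble_simp} and the test functions $q^{r}$, $q^{4}$ in Eqs.~\eqref{rt_three_form_b}--\eqref{rt_three_form_a}. Your explicit observation that the two summands decouple (zero fourth component versus zero first three components) is exactly the structural point the paper relies on implicitly.
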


\begin{proof}
    The proof is straightforward, as it directly follows the proofs of Lemmas~\ref{prism_lemma_0}, \ref{prism_lemma_1}, and \ref{prism_lemma_2} with $G$, $q^r$, and $q^4$ constructed using the definition of the bubble space, $\Vkcirc{k}{3}(\W{4})$, in Eq.~\eqref{rt_prism_3_bubble_simp}.
\end{proof}

\subsection{Dofs for 4-forms on $\W{4}$}
The polynomial 4-forms on the tetrahedral prism, $\Vk{k}{4}(\W{4})$, have the following total dimension 
\begin{align*}
    \text{dim}\left(\Vk{k}{4}(\W{4})\right) &= \text{dim}\left(\Sk{k}{4}(\W{4}) \right) \\
    &= \text{dim}\left(DG^{k-1}\left(\T{1}\right) \times DG^{k-1}\left(\T{3}\right) \right) \\
    &= \frac{1}{6}k^2 (k+1)(k+2).
\end{align*}
The 4-forms have no facet degrees of freedom. As a result, all the degrees of freedom are volumetric. The volumetric degrees of freedom for the 4-form proxy $q$ can be expressed as follows
\begin{align}
   \Sk{k}{4}_{vol}(\W{4}):= \left\{q \rightarrow \int_{\W{4}} q p, \qquad p \in dP^{k-1}(\T{1}) \times dP^{k-1}(\T{3})\right\}. \label{rt_four_form}
\end{align}
It then remains for us to prove unisolvency.

\begin{lemma}
    Let $q\in \Vk{k}{4}(\W{4})$ be a polynomial 4-form for which all the degrees of freedom $\Sk{k}{4}(\W{4})$ vanish. Then $q \equiv 0.$
\end{lemma}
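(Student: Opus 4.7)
The plan is to mimic the very short 4-form argument already used on the pentatope (where the trick was simply to take the test polynomial equal to the proxy itself). The first step is to identify the two polynomial spaces that appear in the statement: by the definitions in Eq.~\eqref{rt_prism_4} and Eq.~\eqref{rt_four_form}, the space $\Vk{k}{4}(\W{4}) = DG^{k-1}(\T{1}) \times DG^{k-1}(\T{3})$ of shape functions and the space $dP^{k-1}(\T{1}) \times dP^{k-1}(\T{3})$ of test functions are naturally isomorphic as polynomial spaces on $\W{4}$; indeed both consist of interior-only tensor-product polynomials of partial degree at most $k-1$ in $x_4$ and total degree at most $k-1$ in $(x_1,x_2,x_3)$. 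In particular, any $q \in \Vk{k}{4}(\W{4})$ is an admissible test function $p$.

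Given this, the proof proper is a one-line positivity argument. I would suppose $q \in \Vk{k}{4}(\W{4})$ has all degrees of freedom in $\Sk{k}{4}_{vol}(\W{4})$ equal to zero, and select $p = q \in dP^{k-1}(\T{1}) \times dP^{k-1}(\T{3})$ in Eq.~\eqref{rt_four_form}. This immediately yields
\begin{equation*}
  0 \;=\; \int_{\W{4}} q \, p \, dx \;=\; \int_{\W{4}} q^{2}\, dx.
\end{equation*}
Since $q^2 \geq 0$ pointwise on $\W{4}$ and $q$ is a polynomial (hence continuous), this forces $q \equiv 0$ on $\W{4}$, which is the desired conclusion.

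There is essentially no obstacle: 4-forms carry no trace degrees of freedom, so no facet-bubble analysis (of the kind needed for the 1-, 2-, and 3-form lemmas) is required, and the tensor-product structure of $\Vk{k}{4}(\W{4})$ matches exactly the tensor-product structure of the test space in Eq.~\eqref{rt_four_form}. The only thing worth double-checking in the write-up is this identification of the two spaces, after which the $p=q$ substitution closes the argument, exactly as in the pentatope case.
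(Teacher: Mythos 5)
Your proof is correct and follows essentially the same route as the paper: identify the test space $dP^{k-1}(\T{1}) \times dP^{k-1}(\T{3})$ with the shape-function space $DG^{k-1}(\T{1}) \times DG^{k-1}(\T{3})$, set $p=q$ in Eq.~\eqref{rt_four_form}, and conclude from $\int_{\W{4}} q^2 = 0$ that $q \equiv 0$. The paper merely phrases the same argument by first expanding $q$ in the basis $v_{ij\ell}(x_1,x_2,x_3)\,v_m(x_4)$ before substituting $p=q$.
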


\begin{proof}
    We begin by choosing a generic $q$, such that
    \begin{align*}
        q = \sum_{ij\ell m} q_{ij\ell m} v_{ij\ell}(x_1, x_2, x_3) v_{m}(x_4),
    \end{align*}
    where
    \begin{align*}
        v_{ij\ell}(x_1,x_2,x_3) \in dP^{k-1}(\T{3}), \quad v_{m}(x_4) \in dP^{k-1}(\T{1}).
    \end{align*}
    The proof follows immediately by setting $p = q$ in Eq.~\eqref{rt_four_form}. Under these circumstances, the degrees of freedom are only guaranteed to vanish if $q$ vanishes.
\end{proof}

\section{Conclusion}

This paper has introduced fully explicit H(skwGrad)-, H(curl)-, and H(div)-conforming finite element spaces, basis functions, and degrees of freedom for pentatopes and tetrahedral prism elements. This exercise has been performed with the aid of FEEC. In order to facilitate the implementation of these methods, whenever possible, we have dispensed with the language of differential forms and instead used the language of linear algebra in order to simplify the presentation. We hope that the resulting finite elements will be used extensively in space-time finite element methods. In particular, the finite element spaces on the pentatope should help facilitate space-time methods on both partially-unstructured and fully-unstructured pentatopic meshes. In addition, the finite element spaces on tetrahedral prisms will help facilitate space-time methods on extruded, partially-unstructured, tetrahedral-prismatic meshes. 

Looking ahead, there are many opportunities for continued development of four-dimensional finite elements. For example, we note that the recent paper of Petrov et al.~\cite{petrov2022enabling} develops four-dimensional, hybrid meshes composed of cubic pyramids, bipentatopes, and tesseracts. To our knowledge, finite element spaces on the cubic pyramid and bipentatope have not been investigated outside of Petrov et al.'s work, (which itself only covers the H1- and L2-conforming cases for cubic pyramids). We anticipate that FEEC techniques can be used to construct a broader range of conforming finite element spaces on these non-simplicial elements in the near future.

\section*{Declaration of Competing Interests}

The authors declare that they have no known competing financial interests or personal relationships that could have appeared to influence the work reported in this paper.



\section*{Funding}

This research did not receive any specific grant from funding agencies in the public, commercial, or not-for-profit sectors.

{\footnotesize\bibliography{technical-refs}}

\begin{thebibliography}{10}
\expandafter\ifx\csname url\endcsname\relax
  \def\url#1{\texttt{#1}}\fi
\expandafter\ifx\csname urlprefix\endcsname\relax\def\urlprefix{URL }\fi
\expandafter\ifx\csname href\endcsname\relax
  \def\href#1#2{#2} \def\path#1{#1}\fi

\bibitem{arnold2010finite}
D.~Arnold, R.~Falk, R.~Winther, Finite element exterior calculus: from hodge
  theory to numerical stability, Bulletin of the American Mathematical society
  47~(2) (2010) 281--354.

\bibitem{nigam2023conforming}
N.~Nigam, D.~M. Williams, Conforming finite element function spaces in four
  dimensions, part {I:} {Foundational} principles and the tesseract, Computers
  \& Mathematics with Applications.

\bibitem{gopalakrishnan2018auxiliary}
J.~Gopalakrishnan, M.~Neumuller, P.~S. Vassilevski, The auxiliary space
  preconditioner for the de {Rham} complex, SIAM Journal on Numerical Analysis
  56~(6) (2018) 3196--3218.

\bibitem{fuentes2015orientation}
F.~Fuentes, B.~Keith, L.~Demkowicz, S.~Nagaraj, Orientation embedded high order
  shape functions for the exact sequence elements of all shapes, Computers \&
  Mathematics with Applications 70~(4) (2015) 353--458.

\bibitem{behr2008simplex}
M.~Behr, Simplex space--time meshes in finite element simulations,
  International Journal for Numerical Methods in Fluids 57~(9) (2008)
  1421--1434.

\bibitem{karyofylli2018simplex}
V.~Karyofylli, M.~Frings, S.~Elgeti, M.~Behr, Simplex space-time meshes in
  two-phase flow simulations, International Journal for Numerical Methods in
  Fluids 86~(3) (2018) 218--230.

\bibitem{karyofylli2019simplex}
V.~Karyofylli, L.~Wendling, M.~Make, N.~Hosters, M.~Behr, Simplex space-time
  meshes in thermally coupled two-phase flow simulations of mold filling,
  Computers \& Fluids 192 (2019) 104261.

\bibitem{von2019simplex}
M.~von Danwitz, V.~Karyofylli, N.~Hosters, M.~Behr, Simplex space-time meshes
  in compressible flow simulations, International Journal for Numerical Methods
  in Fluids 91~(1) (2019) 29--48.

\bibitem{von2021four}
M.~von Danwitz, P.~Antony, F.~Key, N.~Hosters, M.~Behr, Four-dimensional
  elastically deformed simplex space-time meshes for domains with time-variant
  topology, International Journal for Numerical Methods in Fluids 93~(12)
  (2021) 3490--3506.

\bibitem{foteinos20154d}
P.~Foteinos, N.~Chrisochoides, 4d space--time {Delaunay} meshing for medical
  images, Engineering with Computers 31~(3) (2015) 499--511.

\bibitem{caplan2017anisotropic}
P.~C. Caplan, R.~Haimes, D.~L. Darmofal, M.~C. Galbraith, Anisotropic
  geometry-conforming d-simplicial meshing via isometric embeddings, Procedia
  Engineering 203 (2017) 141--153.

\bibitem{caplan2019extension}
P.~Caplan, R.~Haimes, D.~Darmofal, M.~Galbraith, Extension of local cavity
  operators to 3d+t space-time mesh adaptation, in: AIAA Scitech 2019 Forum,
  2019.

\bibitem{caplan2019four}
P.~C. Caplan, Four-dimensional anisotropic mesh adaptation for spacetime
  numerical simulations, Ph.D. thesis, Massachusetts Institute of Technology
  (2019).

\bibitem{caplan2020four}
P.~C. Caplan, R.~Haimes, D.~L. Darmofal, Four-dimensional anisotropic mesh
  adaptation, Computer-Aided Design (2020) 102915.

\bibitem{frontin2021foundations}
C.~V. Frontin, G.~S. Walters, F.~D. Witherden, C.~W. Lee, D.~M. Williams, D.~L.
  Darmofal, Foundations of space-time finite element methods: Polytopes,
  interpolation, and integration, Applied Numerical Mathematics 166 (2021)
  92--113.

\bibitem{anderson2023surface}
J.~T. Anderson, D.~M. Williams, A.~Corrigan, Surface and hypersurface meshing
  techniques for space-time finite element methods, Computer-Aided Design
  (2023) 103574.

\bibitem{tezduyar2006space}
T.~E. Tezduyar, S.~Sathe, R.~Keedy, K.~Stein, Space--time finite element
  techniques for computation of fluid--structure interactions, Computer Methods
  in Applied Mechanics and Engineering 195~(17-18) (2006) 2002--2027.

\bibitem{tezduyar2007modelling}
T.~E. Tezduyar, S.~Sathe, Modelling of fluid--structure interactions with the
  space--time finite elements: {Solution} techniques, International Journal for
  Numerical Methods in Fluids 54~(6-8) (2007) 855--900.

\bibitem{tezduyar2010space}
T.~E. Tezduyar, K.~Takizawa, C.~Moorman, S.~Wright, J.~Christopher, Space--time
  finite element computation of complex fluid--structure interactions,
  International Journal for Numerical Methods in Fluids 64~(10-12) (2010)
  1201--1218.

\bibitem{tezduyar2011space}
T.~E. Tezduyar, K.~Takizawa, T.~Brummer, P.~R. Chen, Space--time
  fluid--structure interaction modeling of patient-specific cerebral aneurysms,
  International Journal for Numerical Methods in Biomedical Engineering 27~(11)
  (2011) 1665--1710.

\bibitem{takizawa2012space}
K.~Takizawa, Y.~Bazilevs, T.~E. Tezduyar, Space--time and {ALE}-{VMS}
  techniques for patient-specific cardiovascular fluid--structure interaction
  modeling, Archives of Computational Methods in Engineering 19 (2012)
  171--225.

\bibitem{kuraishi2019space}
T.~Kuraishi, K.~Takizawa, T.~E. Tezduyar, Space--time isogeometric flow
  analysis with built-in {Reynolds}-equation limit, Mathematical Models and
  Methods in Applied Sciences 29~(05) (2019) 871--904.

\bibitem{terahara2020heart}
T.~Terahara, K.~Takizawa, T.~E. Tezduyar, Y.~Bazilevs, M.-C. Hsu, Heart valve
  isogeometric sequentially-coupled fsi analysis with the space--time topology
  change method, Computational Mechanics 65 (2020) 1167--1187.

\bibitem{bazilevs2020ale}
Y.~Bazilevs, K.~Takizawa, T.~E. Tezduyar, M.-C. Hsu, Y.~Otoguro, H.~Mochizuki,
  M.~C. Wu, {ALE} and space--time variational multiscale isogeometric analysis
  of wind turbines and turbomachinery, Parallel Algorithms in Computational
  Science and Engineering (2020) 195--233.

\bibitem{tezduyar2019space}
T.~E. Tezduyar, K.~Takizawa, Space--time computations in practical engineering
  applications: a summary of the 25-year history, Computational Mechanics
  63~(4) (2019) 747--753.

\bibitem{arnold2006finite}
D.~N. Arnold, R.~S. Falk, R.~Winther, Finite element exterior calculus,
  homological techniques, and applications, Acta Numerica 15 (2006) 1--155.

\bibitem{arnold2018finite}
D.~N. Arnold, Finite element exterior calculus, SIAM, 2018.

\bibitem{natale2017structure}
A.~Natale, Structure-preserving finite element methods for fluids, Ph.D.
  thesis, Imperial College London (2017).

\bibitem{mcrae2016automated}
A.~T. McRae, G.-T. Bercea, L.~Mitchell, D.~A. Ham, C.~J. Cotter, Automated
  generation and symbolic manipulation of tensor product finite elements, SIAM
  Journal on Scientific Computing 38~(5) (2016) S25--S47.

\bibitem{monkbook}
P.~Monk,
  \href{https://doi.org/10.1093/acprof:oso/9780198508885.001.0001}{Finite
  element methods for {M}axwell's equations}, Numerical Mathematics and
  Scientific Computation, Oxford University Press, New York, 2003.
\newblock \href {http://dx.doi.org/10.1093/acprof:oso/9780198508885.001.0001}
  {\path{doi:10.1093/acprof:oso/9780198508885.001.0001}}.
\newline\urlprefix\url{https://doi.org/10.1093/acprof:oso/9780198508885.001.0001}

\bibitem{Arnold13}
D.~N. Arnold, Spaces of finite element differential forms, in: U.~Gianazza,
  F.~Brezzi, P.~Colli~Franzone, G.~Gilardi (Eds.), Analysis and Numerics of
  Partial Differential Equations, Springer, 2013, pp. 117--140.
\newblock \href {http://dx.doi.org/10.1007/978-88-470-2592-9_9}
  {\path{doi:10.1007/978-88-470-2592-9_9}}.

\bibitem{arnold2015finite}
D.~N. Arnold, D.~Boffi, F.~Bonizzoni, Finite element differential forms on
  curvilinear cubic meshes and their approximation properties, Numerische
  Mathematik 129~(1) (2015) 1--20.

\bibitem{petrov2022enabling}
M.~S. Petrov, T.~D. Todorov, G.~S. Walters, D.~M. Williams, F.~D. Witherden,
  Enabling four-dimensional conformal hybrid meshing with cubic pyramids,
  Numerical Algorithms 91~(2) (2022) 671--709.

\end{thebibliography}

\appendix

\section{Construction of Spaces on the Pentatope} \label{pent_exp}
Following the procedure of~\cite{arnold2006finite}, we begin by defining
\begin{align*} \Vk{k}{s}(\T{4}) = P^{k-1}\Lambda^s \oplus \kappa \tilde{P}^{k-1} \Lambda^{s+1},
\end{align*}
where $\kappa$ is the Koszul differential. We will consider the cases of $s =1,2,3,4$ in what follows. For each case, we will define the Koszul differential and introduce the associated spaces.

\begin{itemize}
	\item {\bf 1-forms:} associated with vectors in $\mathbb{R}^4$
	\begin{align*}
	\omega &= a_1 dx^{1} + a_2 dx^{2} + a_3 dx^{3} + a_4 dx^{4}, \\
	\Upsilon_{1} \omega &:= \left[a_1, a_2, a_3, a_4 \right]^{T}.
	\end{align*}
	It follows that $\kappa \omega$ is a 0-form given as
	\begin{align*}
	\kappa \omega &= a_1 x_{1} + a_2 x_{2} + a_3 x_{3} + a_4 x_{4}, \\
	\Upsilon_{0}\left(\kappa \omega\right) &= a \cdot x.
	\end{align*}
	
\item {\bf 2-forms:} associated with skew-symmetric matrices in $\mathbb{K}$
\begin{align*}
	\omega &= a_{12}dx^{1}\wedge dx^{2} + a_{13} dx^{1} \wedge dx^{3} + a_{14} dx^{1} \wedge dx^{4} \\ 
	&+ a_{23}dx^{2}\wedge dx^{3} + a_{24}dx^{2} \wedge dx^{4} + a_{34} dx^{3} \wedge dx^{4}, \\[1.0ex]
	 \Upsilon_{2} \omega & := \frac{1}{2}\left[\begin{array}{cccc}
	 	0&a_{12}&a_{13}&a_{14}\\[1.0ex]
	 	-a_{12} &0&a_{23}&a_{24}\\[1.0ex]
	 	-a_{13}&-a_{23}&0& a_{34}\\[1.0ex]
	 	-a_{14}&-a_{24}&-a_{34}&0
	 \end{array}\right].
\end{align*}

A Koszul differential on a 2-form is given by %
\begin{align*}
\kappa dx^{i} \wedge dx^{j} = x_i dx^{j} - dx^{i} x_j = x_i dx^{j} - x_j dx^{i}.
\end{align*}
Therefore, the Koszul differential of $\omega$ (given above) can be expanded as follows
\begin{align*}
	\kappa \omega&= a_{12} \kappa(dx^{1} \wedge dx^{2}) + a_{13} \kappa(dx^{1} \wedge dx^{3}) + a_{14} \kappa(dx^{1} \wedge dx^{4}) + a_{23}\kappa(dx^{2}\wedge dx^{3})\\
	& + a_{24} \kappa(dx^{2} \wedge dx^{4}) + a_{34} \kappa(dx^{3} \wedge dx^{4})\\
	&= a_{12}(x_{1}dx^{2} - x_{2} dx^{1})+a_{13}(x_{1}dx^{3} - x_{3}dx^{1})+a_{14}(x_{1}dx^{4}-x_{4}dx^{1})\\
	&+a_{23}(x_{2}dx^{3} - x_{3}dx^{2})+a_{24}(x_{2}dx^{4}-x_{4}dx^{2})+a_{34}(x_{3}dx^{4}-x_{4}dx^{3})\\
	&= (-x_{2}a_{12}-x_{3}a_{13}-x_{4}a_{14})dx^{1} + (x_{1}a_{12}-x_{3}a_{23}-x_{4}a_{24})dx^{2}\\
	& + (x_{1}a_{13}+x_{2}a_{23}-x_{4}a_{34})dx^{3} +(x_{1}a_{14} +x_{2}a_{24}+x_{3}a_{34})dx^{4}\\[1.0ex]
	\Upsilon_{1} \left(\kappa \omega\right) &=  \left[\begin{array}{cccc}
		0&a_{12}&a_{13}&a_{14}\\[1.0ex]
		-a_{12} &0&a_{23}&a_{24}\\[1.0ex]
		-a_{13}&-a_{23}&0& a_{34}\\[1.0ex]
		-a_{14}&-a_{24}&-a_{34}&0
	\end{array}\right] \left[\begin{array}{c}
	-x_{1}\\[1.0ex]
	-x_{2}\\[1.0ex]
	-x_{3}\\[1.0ex]
	-x_{4}
\end{array}\right].
\end{align*}

Hence,
\begin{align*}
 \Vk{k}{1}(\T{4})= P^{k-1} \Lambda^1 \oplus \kappa \tilde{P}^{k-1}\Lambda^2 =  (P^{k-1}(\T{4}))^4 \oplus
 \mathcal{L}((\tilde{P}^{k-1}(\T{4}))^{6})x,
\end{align*}
where the latter term describes skew-symmetric matrices of homogeneous degree $k-1$ multiplying the coordinate vector. This is a strict subset of the set of 4-vectors of degree $k$.

Notice that if we take the dot product of the coordinate vector with our matrix-vector product
\begin{align*}
\left[\begin{array}{cccc}
0&a_{12}&a_{13}&a_{14}\\[1.0ex]
-a_{12} &0&a_{23}&a_{24}\\[1.0ex]
-a_{13}&-a_{23}&0& a_{34}\\[1.0ex]
-a_{14}&-a_{24}&-a_{34}&0
\end{array}\right] \left[\begin{array}{c}
-x_{1}\\[1.0ex]
-x_{2}\\[1.0ex]
-x_{3}\\[1.0ex]
-x_{4}
\end{array}\right],
\end{align*}
we get zero. So another way to write our space is given by
\begin{align*}
\Vk{k}{1}(\T{4}) = (P^{k-1}(\T{4}))^4 \oplus \{ p \in (\tilde{P^k}(\T{4}))^4 \vert p \cdot x =0\}.
\end{align*}

\item {\bf 3-forms:}
associated with vectors in $\mathbb{R}^{4}$
\begin{align*} \omega &= a_{123} dx^{1} \wedge dx^{2} \wedge dx^{3} + a_{134} dx^{1}\wedge dx^{3} \wedge dx^{4} \\
&+ a_{234}dx^{2}\wedge dx^{3} \wedge dx^{4} + a_{124}dx^{1} \wedge dx^{2} \wedge dx^{4}, \\[1.0ex]
\Upsilon_{3}\omega &:= \left[a_{234}, -a_{134}, a_{124},-a_{123}\right]^{T}. \end{align*}
A Kozsul differential on a 3-form is 
$$ \kappa (dx^{i}\wedge dx^{j} \wedge dx^{k}) = x_i dx^{j}\wedge dx^{k} - x_j dx^{i} \wedge dx^{k} + x_k dx^{i} \wedge dx^{j}.$$

Therefore, 
\begin{align*}
	\kappa \omega &= a_{123} \kappa(dx^{1} \wedge dx^{2} \wedge dx^{3}) + a_{134} \kappa (dx^{1}\wedge dx^{3} \wedge dx^{4}) \\
	&+ a_{234}\kappa(dx^{2}\wedge dx^{3} \wedge dx^{4}) + a_{124} \kappa(dx^{1} \wedge dx^{2} \wedge dx^{4})\\
	&= a_{123}(x_{1} dx^{2} \wedge dx^{3}-x_{2}dx^{1} \wedge dx^{3} + x_{3}dx^{1} \wedge dx^{2})\\
	& + a_{134}(x_{1}dx^{3} \wedge dx^{4}-x_{3}dx^{1} \wedge dx^{4}+x_{4} dx^{1} \wedge dx^{3})\\
	& +a_{124}(x_{1}  dx^{2} \wedge dx^{4}-x_{2}dx^{1}  \wedge dx^{4}+x_{4}dx^{1} \wedge dx^{2})\\
	& + a_{234}(x_{2}dx^{3} \wedge dx^{4}-x_{3}dx^{2} \wedge dx^{4}+x_{4}dx^{2}\wedge dx^{3})\\
	&= (x_{3}a_{123}+x_{4}a_{124})dx^{1} \wedge dx^{2} + (-x_{2}a_{123}+x_{4}a_{134})dx^{1} \wedge dx^{3}\\
	& + (-x_{3}a_{134}-x_{2}a_{124}) dx^{1} \wedge dx^{4} + (x_{1}a_{123}+x_{4}a_{234}) dx^{2} \wedge dx^{3}\\
	& + (x_{1}a_{124}-x_{3}a_{234}) dx^{2} \wedge dx^{4} + (x_{1}a_{134}+x_{2}a_{234}) dx^{3} \wedge dx^{4}\\[1.0ex]
	\Upsilon_{2}\left(\kappa \omega\right)& = \frac{1}{2} \left[\begin{array}{cccc}
		0&(x_{3}a_{123}+x_{4}a_{124})&(-x_{2}a_{123}+x_{4}a_{134})&(-x_{3}a_{134}-x_{2}a_{124})\\
		*&0&(x_{1}a_{123}+x_{4}a_{234})& (x_{1}a_{124}-x_{3}a_{234})\\
		*&*&0& (x_{1}a_{134}+x_{2}a_{234})\\
		*&*&*&0		
	\end{array}\right]\\[1.0ex]
&= \frac{a_{123}}{2} \left[\begin{array}{cccc}
	0&x_{3}&-x_{2}&0\\
	-x_{3}&0&x_{1}&0\\
	x_{2}&-x_{1}&0&0\\
	0&0&0&0
\end{array}\right]+\frac{a_{134}}{2} \left[\begin{array}{cccc}
	0&0&x_{4}&-x_{3}\\
	0&0&0&0\\
	-x_{4}&0&0&x_{1}\\
	x_{3}&0&-x_{1}&0
\end{array}\right]\\[1.0ex]
&+\frac{a_{124}}{2} \left[\begin{array}{cccc}
	0&x_{4}&0&-x_{2}\\
	-x_{4}&0&0&x_{1}\\
	0&0&0&0\\
	x_{2}&-x_{1}&0&0
\end{array}\right]+\frac{a_{234}}{2}\left[\begin{array}{cccc}
0&0&0&0\\
0&0&x_{4}&-x_{3}\\
0&-x_{4}&0&x_{2}\\
0&x_{3}&-x_{2}&0
\end{array}\right]\\[1.0ex]
&:= \frac{1}{2} \left( -a_{123}B_4-a_{134}B_2 + a_{124}B_3 + a_{234}B_1 \right).
\end{align*}

Hence,
\begin{align*}
\Vk{k}{2}(\T{4}) &= P^{k-1}\Lambda^2 \oplus \kappa \tilde{P}^{k-1}\Lambda^3 \\
&= \mathcal{L}((P^{k-1}(\T{4}))^6) \oplus \mathrm{span}\left\{ \tilde{P}^{k-1}(\T{4}) B_1 + \tilde{P}^{k-1}(\T{4}) B_2+\tilde{P}^{k-1}(\T{4})B_3+\tilde{P}^{k-1}(\T{4})B_4\right\}.
\end{align*}
It turns out that the space of skew-symmetric matrices of homogeneous degree $k-1$ above has a nice characterization. Each of the matrices $B_i$ looks like a rotation in a coordinate hyperplane. Also, each $B_ix=0$. Therefore, we can write 
\begin{align*}
\Vk{k}{2}(\T{4})= \mathcal{L}((P^{k-1}(\T{4}))^6)  \oplus \left\{B\in\mathcal{L}((\tilde{P}^{k}(\T{4}))^6) \vert B x = 0\right\}.
\end{align*} 

It is worth computing the dimension of 
\begin{align*}
\mathcal{S}_k=\mathrm{span}\left\{ \tilde{P}^{k-1}(\T{4})B_1 + \tilde{P}^{k-1}(\T{4})B_2+\tilde{P}^{k-1}(\T{4})B_3+\tilde{P}^{k-1}(\T{4})B_4\right\}.
\end{align*}
We first write
\begin{align*}
    B_1= \VtoM{\begin{bmatrix}
    0\\0\\0\\x_4\\-x_3\\x_2
\end{bmatrix}}, \; B_2 =\VtoM{\begin{bmatrix}
    0\\-x_4\\ x_3\\0\\0\\ -x_1
\end{bmatrix}}, \; B_3 = \VtoM{\begin{bmatrix}
   x_4\\0\\-x_2\\0\\x_1\\ 0
\end{bmatrix}}, \;  B_4 = \VtoM{\begin{bmatrix}
   -x_3\\x_2\\0\\-x_1\\0\\ 0
\end{bmatrix}}.
\end{align*}
Any particular element $q \in \mathcal{S}_{k}$ can therefore be written as
\begin{equation} \label{eq:appendixproof} q = p_1 \VtoM{\begin{bmatrix}
    0\\0\\0\\x_4\\-x_3\\x_2
\end{bmatrix}}+ p_2\VtoM{\begin{bmatrix}
    0\\-x_4\\ x_3\\0\\0\\ -x_1
\end{bmatrix}}+p_3\VtoM{\begin{bmatrix}
   x_4\\0\\-x_2\\0\\x_1\\ 0
\end{bmatrix}}+p_4\VtoM{\begin{bmatrix}
   -x_3\\x_2\\0\\-x_1\\0\\ 0\end{bmatrix}},
\end{equation}
where 
$p_1,p_2,p_3$, and $p_4$ are each homogeneous polynomials in $\tilde{P}^{k-1}(\T{4})$. The dimension of $\mathcal{S}_{k}$ can then be computed by examining the rank  of the matrix 
\begin{align*}
C =C(x):=\begin{bmatrix}
    0&0&x_4&-x_3\\
    0&-x_4&0&x_2\\
    0&x_3&-x_2&0\\
    x_4&0&0&-x_1\\
    -x_3&0&x_1&0\\
    x_2&-x_1&0&0
\end{bmatrix}.
\end{align*}
The matrix $C$ has rank 3 and nullity 1. We can now examine the kernel. Suppose $[p_1,p_2,p_3,p_4]^T \in (\tilde{P}^{k-1}(\T{4}))^4,$ and $C [p_1,p_2,p_3,p_4]^T=0$. By inspection, we see that 
\begin{align*}
&x_4 p_3-x_3p_4=0, \quad -x_4p_2+x_2p_4=0, \quad x_4 p_1 - x_1p_4=0, \\[1.0ex] &\Rightarrow p_1 =\frac{x_1}{x_4}p_4, \quad p_2 =\frac{x_2}{x_4}p_4, \quad p_3 =\frac{x_3}{x_4}p_4.
\end{align*}
Therefore, $p_4$ must be divisible by $x_4$. Now, the set of homogeneous polynomials in $(x_1,x_2,x_3,x_4)$ which are divisible by $x_4$ have a dimension given by $\mathrm{dim}(\tilde{P}^{k-1}(\T{4})) - \mathrm{dim} (\tilde{P}^{k-1}(\T{3}))$. 

Therefore, the dimension of the subset of $(\tilde{P}^{k-1}(\T{4}))^4$ which is associated with $\mathcal{S}_k$ is precisely 
\begin{align*}
\mathrm{dim}( (\tilde{P}^{k-1}(\T{4}))^{4}) - \left( \mathrm{dim}(\tilde{P}^{k-1}(\T{4})) - \mathrm{dim}(\tilde{P}^{k-1}(\T{3})) \right) = \mathrm{dim}((\tilde{P}^{k-1}(\T{4}))^{3}) + \mathrm{dim}(\tilde{P}^{k-1}(\T{3})). 
\end{align*}

\item {\bf 4-forms:} associated with scalar functions in $\mathbb{R}$. The generic 4-form is 
\begin{align*}
    \omega = a_{1234} dx^{1}\wedge dx^{2}\wedge dx^{3} \wedge dx^{4},   
\end{align*}
and the Koszul differential is
\begin{align*}
\kappa \omega &= \kappa (a_{1234} dx^{1}\wedge dx^{2}\wedge dx^{3} \wedge dx^{4}) \\
&= a_{1234} \left(x_{1} dx^{2}\wedge dx^{3} \wedge dx^{4}-x_{2}dx^{1}\wedge dx^{3} \wedge dx^{4}+x_{3}dx^{1}\wedge dx^{2} \wedge dx^{4}-x_{4}dx^{1}\wedge dx^{2}\wedge dx^{3}\right), \\[1.0ex]
\Upsilon_{3}\left(\kappa \omega\right) &=a_{1234} x.
\end{align*}

Therefore, 
\begin{align*}
\Vk{k}{4}(\T{4}) = P^{k-1}\Lambda^3 \oplus \kappa \tilde{P}^{k-1}\Lambda^{4} = (P^{k-1}(\T{4}))^4 \oplus \tilde{P}^{k-1}(\T{4}) x.
\end{align*}

\end{itemize}

\section{Construction of Spaces on the Tetrahedral Prism} \label{tet_exp}

In accordance with the notation in Eq.~\eqref{cross_def}, the $U$ terms are given by
\begin{align*}
    U_0 &:= P^{k} \left(\T{3}\right), \\[1.0ex]
    U_1 &:= \Big\{p = p_1 dx^1 + p_2 dx^2 + p_3 dx^3 =  \tilde{P}^{k}\left(\T{3}\right) dx^1 + \tilde{P}^{k}\left(\T{3}\right) dx^2 + \tilde{P}^{k}\left(\T{3}\right) dx^3, \\[1.0ex] & x = x_1 dx^1 + x_2 dx^2 + x_3 dx^3 \Big| \left(p,x\right) =0 \Big\} \\[1.0ex]
    &\oplus P^{k-1} \left(\T{3}\right) dx^1 + P^{k-1} \left(\T{3}\right) dx^2 + P^{k-1} \left(\T{3}\right) dx^3, \\[1.0ex]
    U_2 &:= P^{k-1}\left(\T{3}\right) dx^1 + P^{k-1}\left(\T{3}\right) dx^2 + P^{k-1}\left(\T{3}\right) dx^3 \oplus \tilde{P}^{k-1}\left(\T{3}\right) \left(x_1 dx^1 + x_2 dx^2 + x_3 dx^3\right), \\[1.0ex]
     &= P^{k-1}\left(\T{3}\right) dx^2 \wedge dx^3 + P^{k-1}\left(\T{3}\right) dx^1 \wedge dx^3 + P^{k-1}\left(\T{3}\right) dx^1 \wedge dx^2 \\[1.0ex] 
     &\oplus \tilde{P}^{k-1}\left(\T{3}\right) \left(x_1 dx^2 \wedge dx^3 - x_2 dx^1 \wedge dx^3 + x_3 dx^1 \wedge dx^2\right), \\[1.0ex]
    U_3 &:= P^{k-1}\left(\T{3}\right) dx^1 \wedge dx^2 \wedge dx^3,
\end{align*}
and the $W$ terms are given by
\begin{align*}
    W_0 & := P^{k}\left(\T{1}\right), \\[1.0ex]
    W_1 & := P^{k-1}\left(\T{1}\right) dx^4.
\end{align*}
We note that the two formulations of $U_2$ (above) are related to one another via the Hodge star operator $\star$.
With these definitions in mind, we can now construct the following tensor-product spaces
\begin{align*}
    \left(U \times W\right)_{0} = & P^{k}\left(\T{1}\right) \times P^{k}\left(\T{3}\right), \\[1.0ex]
    \left(U \times W\right)_{1} = &P^{k} \left(\T{1}\right) \times \Bigg( \Big\{ p = p_1 dx^1 + p_2 dx^{2} + p_3 dx^{3} = \tilde{P}^{k}\left(\T{3}\right) dx^1 + \tilde{P}^{k}\left(\T{3}\right) dx^{2} + \tilde{P}^{k}\left(\T{3}\right) dx^{3} \Big| \\[1.0ex] 
    & \qquad \qquad x = x_1 dx^1 + x_2 dx^2 + x_3 dx^3 + x_4 dx^4 , \left(p,x\right) =0 \Big\}  \\[1.0ex]
    & \qquad \qquad \oplus \left(P^{k-1} \left(\T{3}\right) dx^1 + P^{k-1} \left(\T{3}\right) dx^2 + P^{k-1} \left(\T{3}\right) dx^3 \right) \Bigg) \\[1.0ex]
    & \oplus P^{k-1}\left(\T{1}\right) \times P^{k} \left(\T{3}\right)  dx^4,
\end{align*}
\begin{align*}    
    \left(U \times W\right)_{2} = & P^{k-1}\left(\T{1}\right) \times \Bigg( \Big\{p_1 dx^1\wedge dx^4 + p_2 dx^2 \wedge dx^4 + p_3 dx^3 \wedge dx^4, \\[1.0ex]
    & \qquad \qquad \quad = \tilde{P}^{k}\left(\T{3}\right) dx^1 \wedge dx^4 + \tilde{P}^{k}\left(\T{3}\right) dx^{2} \wedge dx^4 + \tilde{P}^{k}\left(\T{3}\right) dx^{3} \wedge dx^4 \Big| \\[1.0ex]
    & \qquad \qquad \quad x = x_1 dx^1 + x_2 dx^2 + x_3 dx^3 + x_4 dx^4, \left(p,x\right) =0 \Big\} \\[1.0ex]
    &\qquad \qquad \quad \oplus \left(P^{k-1}\left(\T{3}\right) dx^1 \wedge dx^4 + P^{k-1}\left(\T{3}\right) dx^{2} \wedge dx^4 + P^{k-1}\left(\T{3}\right) dx^{3} \wedge dx^4 \right) \Bigg) \\[1.0ex]
    &\oplus P^{k}\left(\T{1}\right) \times \Bigg( \left(P^{k-1}\left(\T{3}\right) dx^2 \wedge dx^3 + P^{k-1}\left(\T{3}\right) dx^1 \wedge dx^3 + P^{k-1}\left(\T{3}\right) dx^1 \wedge dx^2 \right) 
    \\[1.0ex]
    &\qquad \qquad \oplus \tilde{P}^{k-1}\left(\T{3}\right) \left( x_1 dx^2 \wedge dx^3 - x_2 dx^1 \wedge dx^3 +  x_3 dx^1 \wedge dx^2 \right)  \Bigg),
\end{align*}
\begin{align*}
    \left(U \times W\right)_{3} =& P^{k} \left(\T{1}\right) \times P^{k-1} \left(\T{3}\right)  dx^1 \wedge dx^2 \wedge dx^3 \\[1.0ex]
    &\oplus P^{k-1}\left(\T{1}\right) \times \Bigg(  \Big( P^{k-1}\left(\T{3}\right) dx^2 \wedge dx^3 \wedge dx^4 + P^{k-1}\left(\T{3}\right) dx^1 \wedge dx^3 \wedge dx^4 \\[1.0ex]
    &\qquad \qquad \quad + P^{k-1}\left(\T{3}\right) dx^1 \wedge dx^2 \wedge dx^4 \Big)  \\[1.0ex]
    &\qquad \qquad \quad \oplus \tilde{P}^{k-1}\left(\T{3}\right) \Big( x_1 dx^2 \wedge dx^3 \wedge dx^4 - x_2 dx^1 \wedge dx^3 \wedge dx^4 +  x_3 dx^1 \wedge dx^2 \wedge dx^4 \Big)  \Bigg), \\[1.0ex]
    \left(U \times W\right)_{4} =& P^{k-1}\left(\T{1}\right) \times P^{k-1}\left(\T{3}\right)  dx^1 \wedge dx^2 \wedge dx^3 \wedge dx^4.
\end{align*}

\end{document}